\newtheorem{theorem}{Theorem}[section]
\newtheorem{lemma}[theorem]{Lemma}
\newtheorem{proposition}[theorem]{Proposition}
\newtheorem{remark}[theorem]{Remark}
\newenvironment{proof}[1][Proof]{\textbf{#1.} }{\ \rule{0.5em}{0.5em}}
\newtheorem{prop}[theorem]{Proposition}
\def\R{{\Bbb R}}
\def\E{{{\Bbb E}\,}}
\def\cF{{\cal F}}
\def\RR{\mathbb{R}}
\def\E{\mathbb{E}}
\def\cG{{\cal G}}
\def\si{{\sigma}}
\def \eref#1{\hbox{(\ref{#1})}}
\def\ola{\overleftarrow}
\begin{document}

\title{Nonlinear Feynman-Kac formulae for SPDEs with space-time noise}

\author{Jian  Song, Xiaoming Song and Qi Zhang}
 \date{}
\maketitle

\begin{abstract}
 We study a class of backward doubly stochastic differential equations (BDSDEs) involving  martingales with spatial parameters, and show that they provide probabilistic interpretations (Feynman-Kac formulae) for certain semilinear stochastic partial differential equations (SPDEs) with space-time noise. As an application of the Feynman-Kac formulae, random periodic solutions and stationary solutions to certain SPDEs are obtained.
\end{abstract}

\noindent {\em MSC 2010:} Primary 60H15, 30; Secondary 60H07, 37H10.
%60H15 s.p.d.e.
%60H30 Applications of stochastic analysis (to PDE, etc.)
%60H07 stochastic calculus of variations and Malliavin calculus
%37H10 Generation, random and stochastic difference and differential equations

\vspace{1mm}

\noindent {\em Keywords:}  Stochastic partial differential equations; Backward doubly stochastic differential equations; Feynman-Kac formulae; Periodic solutions;  Stationary solutions; Random dynamical systems.

\section{Introduction}

The existence and uniqueness of  solutions to  general backward stochastic differential equations (BSDEs) was obtained by Pardoux and Peng in their pioneering work \cite{pp90}, and they found in \cite{pp} that solutions to BSDEs  provide probabilistic interpretations for solutions to semilinear parabolic PDEs, which is an extension of the classical Feynman-Kac formula. Furthermore, Pardoux and Peng \cite{pardouxpeng} introduced and studied the so-called backward doubly stochastic  differential equations (BDSDEs), the solutions to  which serve as  nonlinear Feynman-Kac formulae for associated semilinear SPDEs driven by white noise in time. Along this line, this article concerns probabilistic interpretations (nonlinear Feynman-Kac formulae) for solutions to a class of semilinear SPDEs driven by space-time noise.

Let $(\Omega, \mathcal F, P)$ be a   probability space satisfying the usual conditions. Let $W=(W_t, t\ge0)$ be standard $d$-dimensional Brownian motion and $(B(t,x), t\ge0 )$ be a  one-dimensional local martingale with spatial parameter $x\in \R^d$ which is  independent of $W$.  Consider the following BDSDE,
 \begin{eqnarray}\label{bdsde}
Y_s^{t,x}&=&\phi(X_T^{t,x})+\int_s^T f(r,X_r^{t,x},Y_r^{t,x},Z_r^{t,x})dr\nonumber\\
 &&\quad\quad+{ \int_s^T g(r,X_r^{t,x},Y_r^{t,x},Z_r^{t,x} )\overleftarrow B(dr,X_r^{t,x})}-\int_s^T Z_r^{t,x}dW_r,
 \quad  s\in[t, T],
\end{eqnarray}
where $X_s^{t,x}$ is the unique strong solution to
 \begin{equation}\label{sde}
dX_s^{t,x}=b(X_s^{t,x})ds+\sigma(X_s^{t,x})dW_s, \quad s\in[t,T],\quad
X_t^{t,x}=x\in \R^d.
\end{equation}
Here $\phi: \R^d\rightarrow \R; f,g: [0,T]\times\R^d\times \R\times \R^{d}; b: \RR^d\rightarrow \RR^d$ and $\si:\RR^d\rightarrow \RR^{d\times d}$ are  measurable functions.  Denote by $\mathscr L$ the infinitesimal generator of $X$, i.e.
\[
(\mathscr L u)(x) =\frac12 \sum_{i,j=1}^d a_{ij}(x)\frac{\partial ^2  }{\partial x_i\partial x_j}u(x)
+\sum_{i =1}^d b_{i }(x)\frac{\partial    }{\partial x_i } u(x)\,,
\]
with $a_{ij}(x)=\sum_{k=1}^d \si_{ik}(x) \si_{jk}(x).$

 There are three major goals in this article. Firstly, under suitable conditions, we obtain an  existence and uniqueness theorem for BDSDE \eqref{bdsde}. Secondly, we  establish the connection between BDSDE \eqref{bdsde} and the following semilinear SPDE,
\begin{equation}\label{spde}
\begin{cases}
-\partial_tu(t,x)= \Big[\mathscr L u(t,x)+f(t,x,u(t,x), \nabla u(t,x)\sigma(x))\Big]dt\\
\quad \qquad\qquad  \qquad +g(t,x,u(t,x),\nabla u(t,x) \sigma(x) )\overleftarrow B(dt, x),\quad\quad  (t,x)\in [0,T]\times \mathbb R^d,\\
u(T,x)=\phi(x)\,,
\end{cases}
\end{equation}
where  $\nabla u=(\partial_{x_1} u, \dots, \partial_{x_d} u)$ and  $\int  \overleftarrow B(dt,x)$ is a backward It\^o integral. Thirdly, as an application of this connection, we construct periodic and stationary solutions for SPDEs via infinite horizon BDSDEs. 

 We would like to give a  brief review for the background and some remarks on the connection between our work and some related literature.  After the introduction of BDSDEs driven by two independent Brownian motions in  \cite{pardouxpeng}, BDSDEs and probabilistic interpretations (nonlinear Feynman-Kac formulae) for SPDEs have been extensively investigated in several directions, and we list  a few of them which is far from complete.

    For SPDEs driven by temporal white noise, Bally and Matoussi  gave  probabilistic interpretations for  solutions in Sobolev spaces (weak solutions) in \cite{bm01}; Buckdahn and Ma  introduced  viscosity solutions and  established Feynman-Kac formulae  in \cite{bma01, bma01'}.  SPDEs driven by temporal colored noise and the associated BDSDEs driven by Brownian motion and fractional Brownian motion were studied  by Jing and Le\'on in \cite{jing, jingleon}.

    Feynman-Kac formulae for linear SPDEs with space-time noise were obtained in  \cite{hhlnt, hns, hns11, s17}.  On the other hand, the related results for nonlinear SPDEs with space-time noise seem to be very limited, and we only find one paper \cite{MS} which is due to  Matoussi and Scheutzow.  In \cite{MS}, the authors dealt with a general type of SPDEs with  nonlinear space-time noise which includes equation \eqref{spde}.  However, the conditions imposed in \cite{MS} for the associated BDSDEs  are rather restrictive in our situation. In the present article,  we obtain the existence and uniqueness of the solution to BSDE \eqref{bdsde} under relatively general conditions (see Theorem \ref{BDSDE} and Remark \ref{link-MS} for its relationship with the result in  \cite{MS}).  In comparison with BDSDEs driven by (fractional) Brownian motions in \cite{pardouxpeng, bm01, bma01', jingleon}, technically it is more difficult to establish an existence and uniqueness theorem under general conditions for BDSDE \eqref{bdsde} due to the spatial dependence of $B(t,x)$. The key step is to combine the It\^o's formula with contraction mapping theorem in a proper way to obtain the existence and uniqueness of the solution  in a suitable Banach space (i.e. $\mathcal M^{2,\beta}$ given in \eqref{mspace}, see the proof of Theorem \ref{BDSDE}).

A remarkable application of the nonlinear Feynman-Kac formula is the construction of random periodic and stationary solutions to SPDEs via the associated BDSDEs.  Unlike the deterministic situation, in which elliptic PDEs give the steady status of parabolic PDEs when time tends to infinity, ``elliptic SPDEs'' do not exist, and we need to use other equations to take the role of ``elliptic SPDEs''. It turns out that the solutions to associated infinite horizon BDSDEs can describe the periodic/stationary solutions to SPDEs.\footnote{Note that Peng \cite{pe} first discovered that the solutions to semilinear elliptic PDEs can be represented by the solutions to infinite horizon BSDEs, and Zhang and Zhao \cite{zhangzhao} obtained random stationary solutions to SPDEs driven by cylindrical Brownian motion via BDSDEs.} In Section 6, we aim to find the random periodic solution to the following infinite horizon SPDE without terminal value which has a form on the interval $[0,T]$ for arbitrary $T>0$:
\begin{eqnarray}\label{zhang685}
u(t,x)&=&u(T,x)+\int_{t}^{T}[\mathscr{L}u(s,x)+f\big(s,x,u(s,x),(\sigma^T\nabla u)(s,x)\big)]ds\nonumber\\
&&-\int_{t}^{T}g\big(s,x,u(s,x),(\sigma^T\nabla v)(s,x)\big)\overleftarrow B(ds, x).
\end{eqnarray}
%A solution $u$ of SPDE (\ref{zhang685}) generates a semiflow.
For a given $\tau>0$, if a solution $u$ to SPDE (\ref{zhang685}) satisfies
\begin{eqnarray}\label{sz6}
{\theta}_{\tau}\circ u({t,\cdot})=u({t+\tau,\cdot})\ \ \ {\rm for}\
{\rm all}\ t\geq0,\ \ {\rm a.s.,}
\end{eqnarray}
where $\theta$ is the shift operator defined in Section 6, we call $u$ a random periodic solution.

Periodicity is a common phenomenon in our world which is exhibited in, for instance,  change of seasons, long-duration oscillation of ocean temperature, and migration pattern of birds. Many efforts have been made by  mathematicians, physicists, oceanographers, biologists, etc., to depict and study  periodicity in systems perturbed by noises. Considering the significance of periodic solution in deterministic dynamical system, the importance of (random) periodic solution in random dynamical system is obvious. However, unlike in  deterministic dynamical systems, the perturbations caused by the noises in  random dynamical systems break the strict periodicity,  which had brought  difficulty to give a rigorous mathematical definition of periodicity for a long time. Observing that the random periodic solution is a stationary solution (stochastic fixed solution) of fixed discrete times with an equal interval as the period, Zhao and Zheng \cite{zheng} put forward the concept of random periodic solution for $C^1$-cocycles, and later  Feng, Zhao and Zhou proposed random periodic solution for semi-flows  in \cite{fe-zh-zh}. 

Nevertheless, random periodic solutions can be obtained in  few cases for SPDEs due to the partial differential operator and the noises. To our best knowledge, the only known result was  obtained by  Feng, Wu and Zhao in \cite{fe-wu-zh} based on the definition of random periodic solution for semi-flows.   In \cite{fe-wu-zh} , the authors identified random periodic solutions to SPDEs driven by temporal white noise  with  solutions to infinite horizon random integral equations.   In this article, random periodic solutions to SPDEs driven by  local martingales with spatial parameters are constructed by the associated BDSDEs. We would like to point out that our result  is not an immediate extension of \cite{fe-wu-zh} or \cite{zhangzhao},  since the  noise in SPDE \eqref{spde}   also depends on the space variable $x$, which makes the analysis more challenging.

This article is organized as follows. In Section 2, we recall some preliminaries on It\^o-Kunita's stochastic integral and provide some lemmas which shall be used later.    The existence and uniqueness of the solution to BDSDE \eqref{bdsde} is studied in Section 3, and the $p$-moments of the solution is estimated in Section 4. In Section 5, with the help of the finite $p$-moments of the solution, we obtain the regularity of the solution to the BDSDE and then establish the connection between BDSDE \eqref{bdsde} and SPDE \eqref{spde}. Finally, in Section 6 we construct the periodic and stationary solution to the SPDE via the infinite horizon BDSDE.

Throughout the paper, $C$ is a generic constant which may vary in different places.

\setcounter{equation}{0}

\section{Some preliminaries}

In this section we provide preliminaries on the integrals against local martingales with spatial parameters and some useful lemmas.  For more details on the It\^o-Kunita's integral, we refer to \cite{kunita}.

Denote
\begin{eqnarray}\label{Ft}
& \mathcal F_t
 =  \mathcal F_t^W \vee \mathcal F_{t,T}^B \,,\quad
 & \mathcal G_t =  \mathcal F_t^W \vee \mathcal F_{T}^B\,,
\end{eqnarray}
where $\mathcal F_{s,t}^B  = \sigma\{B(r,x)-B(s,x), s\le r\le t, x\in\mathbb R^d\}$, $\mathcal F_t^B=\mathcal F_{0,t}^B$,  and $\cF_{s,t}^W$ and $\cF_t^W$ are defined in a similar way.  Note that $\mathcal G_t$ is  a filtration, while $\mathcal F_t$ is not. The joint quadratic variation of $(B(s,x), t\ge 0, x\in\R^d)$ is
\begin{equation}
\langle B(\cdot, x)\,, B(\cdot, y)\rangle_t =\int_0^t q(s, x, y) ds\,.\label{e.1.1}
\end{equation}
Throughout the paper,  we assume the following condition  on $q(s,x,y)$.
\begin{description}
\item{\bf (H)} The function $q(s,x,y)$ given in \eqref{e.1.1} satisfies
\begin{equation}\label{e.1.2}
\sup_{0\le s\le T}|q(s, x,y)|\le K (1+|x|^\kappa+|y|^\kappa),
\end{equation}
 for some $ 0<\kappa<2 $ and $0<K<\infty$.
\end{description}

%For any $0\le s<t<\infty$ denote $\cF_{s, t}^W:=\si(W_t-W_r, s\le r\le t)$.    If
%$(f_s: 0\le s\le t)$ is a $\cF_{\cdot, t}^W$ adapted continuous process such that $\E \int_0^t f_s^2ds<\infty$,
%then we can define backward stochastic integral (see \cite{kunita} or \cite{mckean} for instance)
%\[
%\int_0^t f_s dW_s=\lim_{|\pi|\rightarrow 0} \sum_{k=0}^{n-1} f_{t_{k+1}} (W_{t_{k+1}}-W_{t_k})\,,
%\]
%where $\pi:0=t_0<t_1<\cdots<t_n=t$ is a partition of $[0, t]$ and $|\pi|=\max\limits_{0\le k\le n-1}(t_{k+1}-t_k)$.
%
%This definition can be extended to a more general class of stochastic process as in the classical It\^o calculus. The backward stochastic integral essentially is the same as the classical (forward) It\^o integral if we just reverse the time, and therefore, it enjoys many properties of the forward It\^o  integral such as It\^o's formula, It\^o isometry and so on. In particular,  it is straightforward to verify the following proposition.
%\begin{prop} Let $b$ and $\si$ satisfy the global Lipschitz condition. Then  for any $t>0$ and $x\in \RR^d$,
%the stochastic differential equation with backward stochastic integral
% \begin{equation}\label{bsde}
%dX_s^{t,x}=-b(X_s^{t,x})ds-\sigma(X_s^{t,x})dW_s, \quad s\in[0,t],\quad
%X_t^{t,x}=x
%\end{equation}
%has a unique solution $(X_s^{t, x}\,, 0\le s\le t)$ such that
%$X_s^{t,x}\in \cF_{s, t}^W.$
%\end{prop}

 Let $(f_t,0\le t\le T)$ be a predictable process with respect to the backward filtration $\mathcal F_{t,T}^B$ satisfying
\begin{equation}\label{qcondition}
\int_0^T q(s,f_s,f_s)ds<\infty\qquad a.s.,
\end{equation}
then the stochastic integral $\int_0^T\overleftarrow B(ds, f_s)$
is well-defined (see e.g. \cite[Chapter 3]{kunita}). In particular, if the paths of $f_t$ are a.s. continuous, then assuming {\bf (H)},  condition (\ref{qcondition}) is satisfied, and the integral can be approximated by Riemann sums (\cite{kunita, mckean})
\[ \int_t^T \overleftarrow B(ds, f_s)=\lim_{|\Delta|\to 0} \sum_{k=0}^{n-1} \big[ B(t_{k+1}, f_{t_{k+1}})- B(t_{k}, f_{t_{k+1}}) \big],\]
where $\Delta=\{ t=t_0<\cdots<t_n=T\}$ and   $|\Delta|=\sup_{0\le k \le n-1}|t_{k+1}-t_k|$.

Let $(X_s^{t,x}, t\le s\le T)$ be the solution to equation (\ref{sde}).  Note that $X^{t,x}$ is independent of $B$ and it is a.s. continuous. Thus $\int_s^T \overleftarrow B(dr, X_r^{t, x})$ for $t\le s \le T$ is well defined under  {\bf(H)}, and
%and we denote it  by $M_s^{t, x}$. Note that  $(M_s^{t,x}, t\le s\le T)$ is an $\cF_{t,T}^W\vee \cF_{s,T}^B$ backward semimartingale and by
 its quadratic variation is given by (\cite[Theorem 3.2.4]{kunita})
\begin{equation}\label{eq-quad}
\left\langle \int_\cdot^T \overleftarrow B(dr, X_r^{t, x}) \right\rangle_{s,T}=\int_s^T q(r,X_r^{t,x}, X_r^{t,x}) dr\,.
\end{equation}

In the sequel, we shall use the following generalized It\^o's formula, which is an extension of   \cite[Lemma 1.3]{pardouxpeng}.
\begin{lemma}\label{itolemma}
 Suppose that $f, g$ and $h$ are $\cF_t$-measurable processes such that 
 \[\int_0^T |f_s|ds+\int_0^T g_s^2 q(s,X_s,X_s)ds+\int_0^T h_s^2 ds<\infty,\, a.s.,\]
 where $X_s=X_s^{0,x}$.  Let $S_t$ be $\cF_t$-measurable  and of the form
$$S_t=S_0+\int_0^t f_s ds+\int_0^t g_sd\ola B(ds, X_s)+\int_0^t h_s dW_s,
\, 0\le t\le T,$$
then we have
\begin{eqnarray}
S_t^2
&=& S_0^2+2\int_0^t S_sdS_s -\int_0^t|g_s|^2q(s,X_s,X_s)ds+\int_0^t|h_s|^2ds\notag\\
&=& S_0^2+2\int_0^t S_sf_s ds+2\int_0^t S_sg_sd\ola B(ds, X_s)
+2\int_0^t S_sh_sdW_s\nonumber\\
&&\quad  -\int_0^t|g_s|^2q(s,X_s,X_s)ds+\int_0^t|h_s|^2ds\,.\label{e.3.5}
\end{eqnarray}
 More generally, for any function $\varphi\in C^2(\R)$, we have the following It\^o's formula,
 \begin{eqnarray}
\varphi(S_t)
&=& \varphi(S_0)+\int_0^t \varphi'(S_s)dS_s -\frac12\int_0^t\varphi''(S_s)
|g_s|^2q(s,X_s,X_s)ds+\frac12\int_0^t\varphi''(S_s)|h_s|^2ds\notag\\
&=& \varphi(S_0)+\int_0^t \varphi'(S_s)f_s ds+\int_0^t \varphi'(S_s)g_sd\ola B(ds, X_s)
+\int_0^t \varphi'(S_s)h_sdW_s\nonumber\\
&&\quad -\frac12\int_0^t\varphi''(S_s)
|g_s|^2q(s,X_s,X_s)ds+\frac12\int_0^t\varphi''(S_s)|h_s|^2ds\,.\label{e.ito}
\end{eqnarray}

\end{lemma}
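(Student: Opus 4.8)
The plan is to follow the scheme of \cite[Lemma 1.3]{pardouxpeng}, adapting it to the backward It\^o--Kunita integral against $B(\cdot,X_\cdot)$. The novelty is that $S_t$ is built from three ingredients of different natures --- a finite-variation drift, a \emph{backward} It\^o integral against the spatially parametrized martingale $B$, and a \emph{forward} It\^o integral against $W$ --- so the quadratic-variation bookkeeping must respect both the independence of $W$ and $B$ and the time orientation of each stochastic integral. First I would reduce, by a localization with stopping times, to the case where $f,g,h$ are bounded, and then approximate $f,g,h$ by elementary processes (piecewise constant in time on deterministic partitions): the forward integral against $W$ is controlled in the usual $L^2$ sense, while the backward integral against $B(\cdot,X_\cdot)$ is controlled through the It\^o-type isometry that follows from the quadratic-variation identity \eqref{eq-quad}, with $q(s,X_s,X_s)\,ds$ playing the role of the controlling measure. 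Since the three ingredients have continuous paths, so does $S$, which is what is needed to pass to Riemann sums.

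For elementary integrands, fix a partition $0=t_0<\cdots<t_n=t$ refining the one on which $g,h$ are constant, telescope $\varphi(S_t)-\varphi(S_0)=\sum_k[\varphi(S_{t_{k+1}})-\varphi(S_{t_k})]$, and Taylor expand $\varphi(S_{t_{k+1}})-\varphi(S_{t_k})=\varphi'(S_{t_k})\Delta S_k+\tfrac12\varphi''(S_{t_k})(\Delta S_k)^2+R_k$ with $\Delta S_k=S_{t_{k+1}}-S_{t_k}$ and a remainder $R_k$ negligible in the limit. Letting $|\Delta|\to0$: the drift and $W$-parts of $\sum_k\varphi'(S_{t_k})\Delta S_k$ converge to $\int_0^t\varphi'(S_s)f_s\,ds$ and to the forward integral $\int_0^t\varphi'(S_s)h_s\,dW_s$, respectively; for the $B$-part one uses $\varphi'(S_{t_k})=\varphi'(S_{t_{k+1}})-\varphi''(S_{t_k})\Delta S_k+\cdots$ to evaluate the integrand at the right endpoint (the backward convention), which produces $\int_0^t\varphi'(S_s)g_s\,\ola B(ds,X_s)$ together with an endpoint correction $-\int_0^t\varphi''(S_s)|g_s|^2 q(s,X_s,X_s)\,ds$. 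The quadratic-variation sum $\tfrac12\sum_k\varphi''(S_{t_k})(\Delta S_k)^2$ converges to $\tfrac12\int_0^t\varphi''(S_s)\big(|g_s|^2 q(s,X_s,X_s)+|h_s|^2\big)\,ds$, using that products involving the drift are negligible and that the cross-variation between the forward $W$-integral and the backward $B$-integral vanishes (as in \cite{pardouxpeng}, because $W$ is a Brownian motion with respect to the genuine filtration $\mathcal G_t$ and $W$ is independent of $B$). Combining the endpoint correction with the quadratic-variation term leaves the net contribution $-\tfrac12\int_0^t\varphi''(S_s)|g_s|^2 q(s,X_s,X_s)\,ds+\tfrac12\int_0^t\varphi''(S_s)|h_s|^2\,ds$, which gives \eqref{e.ito} for elementary integrands; the choice $\varphi(x)=x^2$ recovers \eqref{e.3.5}.

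Finally I would undo the reductions: pass to the limit in \eqref{e.ito} along the approximating elementary integrands, using the isometries to control the stochastic integrals and dominated convergence for the Lebesgue integrals, and then remove the localization by letting the stopping times tend to $T$. The main obstacle, and the place where the argument genuinely departs from \cite{pardouxpeng}, is the careful handling of the backward integral against $B(\cdot,X_\cdot)$: one must (i) justify the approximation and the limit passage via \eqref{eq-quad} rather than a plain $L^2$ isometry, since the controlling process $q(s,X_s,X_s)$ is random; (ii) track the sign and the right-endpoint evaluation that yield the $-\tfrac12\int\varphi''(S_s)|g_s|^2 q\,ds$ correction, which is opposite in sign to the forward $W$-correction; and (iii) work with the family $\mathcal F_t=\mathcal F_t^W\vee\mathcal F_{t,T}^B$, which is not a filtration, so that the measurability of $S_{t_k}$, $\varphi'(S_{t_k})$, $\varphi''(S_{t_k})$ with respect to the $\sigma$-fields that make the forward $W$-integral and the backward $B$-integral legitimate must be checked at each step. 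Alternatively, once \eqref{e.3.5} is in hand one may deduce \eqref{e.ito} by first establishing the product rule for two processes of the given form (apply \eqref{e.3.5} to their sum and polarize), extending to polynomials, and then to general $\varphi\in C^2(\R)$ by localization and uniform approximation of $\varphi,\varphi',\varphi''$ on compact sets.
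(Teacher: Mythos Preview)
Your proposal is correct and follows essentially the same approach as the paper: both argue by telescoping over a partition, shifting the evaluation point from the left endpoint to the right endpoint for the backward $B$-integral (which produces the extra $-\int\varphi''(S_s)|g_s|^2 q\,ds$ correction), and identifying the quadratic-variation sum while using that the cross-variation between the forward $W$-integral and the backward $B$-integral vanishes. The paper only sketches the case $\varphi(x)=x^2$ and simply asserts that the general $\varphi$ follows ``in a similar spirit'' by standard approximation and localization, whereas you spell out the reduction to elementary integrands and the passage to the limit more explicitly; but the underlying mechanism is identical.
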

\begin{proof}
The It\^o's formula \eref{e.ito} can be proven by the standard methods of approximation and localization (see e.g. \cite[Theorem 3.3]{KaratzasShreve}). Here we provide a sketch of proof for \eref{e.3.5}, and \eref{e.ito} can be proven in a similar spirit.

Fixing $t>0$ and a partition $\Delta=\{0=t_0, t_1,\ldots, t_n=t\}$ of $[0,t]$, we have
\begin{align}
 S_t^2-S_0^2=&\sum_{i=0}^{n-1} \left[S_{t_{i+1}}^2-S_{t_i}^2\right]\notag\\
 =&2\sum_{i=0}^{n-1} S_{t_{i}}\left[S_{t_{i+1}}-S_{t_i}\right]+\sum_{i=0}^{n-1} \left[S_{t_{i+1}}-S_{t_i}\right]^2\notag\\
 =&2\sum_{i=0}^{n-1}S_{t_i}\left(\int_{t_i}^{t_{i+1}} f_s ds+\int_{t_i}^{t_{i+1}} h_s dW_s\right)+2\sum_{i=0}^{n-1}S_{t_{i+1}} \int_{t_i}^{t_{i+1}} g_s d\ola B(ds, X_s)\notag\\
 & -2\sum_{i=0}^{n-1}(S_{t_{i+1}}-S_{t_i}) \int_{t_i}^{t_{i+1}} g_s d\ola B(ds, X_s)+\sum_{i=0}^{n-1} \left[S_{t_{i+1}}-S_{t_i}\right]^2.\label{eq2.5}
\end{align}
When the mesh size $|\Delta|$ goes to zero, $$\sum_{i=0}^{n-1}S_{t_i}\left(\int_{t_i}^{t_{i+1}} f_s ds+\int_{t_i}^{t_{i+1}} h_s dW_s\right)+\sum_{i=0}^{n-1}S_{t_{i+1}} \int_{t_i}^{t_{i+1}} g_s d\ola B(ds, X_s)$$ converges to $\int_0^t S_s dS_s$, and the rest terms on the right-hand side of \eqref{eq2.5}
\begin{align*}
&-2\sum_{i=0}^{n-1}(S_{t_{i+1}}-S_{t_i}) \int_{t_i}^{t_{i+1}} g_s d\ola B(ds, X_s)+\sum_{i=0}^{n-1} \left[S_{t_{i+1}}-S_{t_i}\right]^2\\
&=\sum_{i=0}^{n-1} \left(S_{t_{i+1}}-S_{t_i}\right)\left(\int_{t_{i}}^{t_{i+1}} h_s dW_s-\int_{t_{i}}^{t_{i+1}} g_s d\ola B(ds, X_s) \right) +\sum_{i=0}^{n-1} \rho_i\\
&=\sum_{i=0}^{n-1}\left(\int_{t_{i}}^{t_{i+1}} h_s dW_s+\int_{t_{i}}^{t_{i+1}} g_s d\ola B(ds, X_s) \right)\left(\int_{t_{i}}^{t_{i+1}} h_s dW_s-\int_{t_{i}}^{t_{i+1}} g_s d\ola B(ds, X_s) \right)+\sum_{i=0}^{n-1} \rho_i'\\
&=\sum_{i=0}^{n-1}\left(\int_{t_{i}}^{t_{i+1}} h_s dW_s\right)^2 -  \sum_{i=0}^{n-1}\left( \int_{t_{i}}^{t_{i+1}} g_s d\ola B(ds, X_s) \right)^2+\sum_{i=0}^{n-1} \rho_i'
\end{align*}
converges to $$\int_0^t h_s^2ds- \int_0^t g_s^2 q(s,X_s,X_s)ds,$$ since $\sum_{i=0}^{n-1} \rho_i'$ converges to zero based on the fact that the covariation between a martingale and an absolutely continuous process is zero. \hfill
\end{proof}

Similarly, we also have the following product rule.
\begin{lemma}\label{product}
 Let $Q_t$ be a continuous $\cF_t$-measurable process with bounded variation and $S_t$ be given in Lemma \ref{itolemma}. Then the following product rule holds
 \[d(S_tQ_t)= S_t dQ_t +Q_t dS_t.\]
\end{lemma}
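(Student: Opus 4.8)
The plan is to prove the integrated form
\[
S_tQ_t = S_0Q_0 + \int_0^t S_s\,dQ_s + \int_0^t Q_s\,dS_s,\qquad 0\le t\le T,
\]
by a direct Riemann-sum computation in the same spirit as the proof of Lemma \ref{itolemma}, exploiting that $Q$ has bounded variation so that the quadratic covariation of $S$ and $Q$ vanishes and no It\^o correction appears. Here $\int_0^t Q_s\,dS_s := \int_0^t Q_s f_s\,ds + \int_0^t Q_s g_s\,d\ola B(ds,X_s) + \int_0^t Q_s h_s\,dW_s$, and one first notes that the stochastic integrals defining $S$ admit continuous modifications, so $S$ has a.s.\ continuous paths; in particular $S$ is bounded and uniformly continuous on $[0,t]$, and $Q$ is bounded on $[0,t]$ with finite total variation $\mathrm{Var}_{[0,t]}(Q)$.

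First I would fix $t>0$ and a partition $\Delta=\{0=t_0<t_1<\cdots<t_n=t\}$ of $[0,t]$ and write, via $a'b'-ab=a(b'-b)+b(a'-a)+(a'-a)(b'-b)$,
\begin{align*}
S_tQ_t-S_0Q_0
&=\sum_{i=0}^{n-1}\big(S_{t_{i+1}}Q_{t_{i+1}}-S_{t_i}Q_{t_i}\big)\\
&=\sum_{i=0}^{n-1}S_{t_i}\big(Q_{t_{i+1}}-Q_{t_i}\big)
+\sum_{i=0}^{n-1}Q_{t_i}\big(S_{t_{i+1}}-S_{t_i}\big)
+\sum_{i=0}^{n-1}\big(S_{t_{i+1}}-S_{t_i}\big)\big(Q_{t_{i+1}}-Q_{t_i}\big).
\end{align*}
Then I would let $|\Delta|\to0$ and treat the three sums separately. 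Writing $Q^{\Delta}$ and $S^{\Delta}$ for the left-continuous step-function approximations ($Q^{\Delta}_s=Q_{t_i}$ for $s\in[t_i,t_{i+1})$, similarly $S^{\Delta}$), the first sum equals the Riemann--Stieltjes sum $\int_0^t S^{\Delta}_s\,dQ_s$, which converges a.s.\ to $\int_0^t S_s\,dQ_s$ since $S$ is continuous and $Q$ is continuous of bounded variation. The second sum equals $\int_0^t Q^{\Delta}_s\,dS_s$; since $Q$ is $\cF_s$-measurable, $Q^{\Delta}$ is adapted to the relevant (forward, resp.\ backward) filtration, and $Q^{\Delta}\to Q$ uniformly on $[0,t]$ with $|Q^{\Delta}|\le\sup_{[0,t]}|Q|<\infty$, so by continuity of the It\^o (forward and backward) integrals in the integrand together with $\int_0^t|f_s|ds+\int_0^t g_s^2 q(s,X_s,X_s)ds+\int_0^t h_s^2 ds<\infty$, it converges in probability to $\int_0^t Q_s\,dS_s$, exactly as the corresponding terms are handled in the proof of Lemma \ref{itolemma}. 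For the cross sum I would use the crude estimate
\[
\Big|\sum_{i=0}^{n-1}\big(S_{t_{i+1}}-S_{t_i}\big)\big(Q_{t_{i+1}}-Q_{t_i}\big)\Big|
\le\Big(\max_{0\le i\le n-1}|S_{t_{i+1}}-S_{t_i}|\Big)\,\mathrm{Var}_{[0,t]}(Q)\xrightarrow[|\Delta|\to0]{}0\quad\text{a.s.},
\]
by uniform continuity of $S$ on $[0,t]$ and finiteness of $\mathrm{Var}_{[0,t]}(Q)$. Combining the three limits gives the displayed identity, which is the integrated form of $d(S_tQ_t)=S_t\,dQ_t+Q_t\,dS_t$. (Alternatively one could polarize, $S_tQ_t=\tfrac12[(S_t+Q_t)^2-S_t^2-Q_t^2]$, applying \eqref{e.3.5} to $S_t$ and to $S_t+Q_t$ and the classical $Q_t^2=Q_0^2+2\int_0^t Q_s\,dQ_s$; the $g$- and $h$-correction terms cancel. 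This requires noting that the argument of Lemma \ref{itolemma} is unaffected when the drift $f_s\,ds$ is replaced by a general continuous bounded-variation term $dQ_s$, since that proof only uses that the covariation of a continuous (local) martingale with a continuous bounded-variation process is zero; the direct Riemann-sum route above avoids this point.)

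I do not expect a genuine obstacle: the statement is classical integration by parts, and the only feature beyond the standard setting is the backward It\^o integral $\int Q_s g_s\,d\ola B(ds,X_s)$ against the martingale $B$ with spatial parameter. The one point needing a little care is the convergence of the left-endpoint (and, for $dW$, forward) Riemann sums of these stochastic integrals when the integrand $Q^{\Delta}$ converges uniformly to the continuous adapted process $Q$ — but this is precisely the approximation already invoked in Section 2 and in the proof of Lemma \ref{itolemma}, so it can be quoted rather than reproven.
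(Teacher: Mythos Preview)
Your proposal is correct and follows exactly the approach the paper intends: the paper gives no explicit proof of this lemma but simply states ``Similarly, we also have the following product rule,'' referring back to the Riemann-sum argument of Lemma~\ref{itolemma}. Your decomposition into the three sums, with the cross term vanishing by bounded variation of $Q$ and uniform continuity of $S$, is precisely that ``similar'' argument spelled out in full.
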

%\begin{proof}
% Let $\Delta=\{0=t_0< t_1<\ldots< t_n=t\}$ be a partition of $[0,t]$, and we have
% \begin{align*}
%  S_tQ_t-S_0Q_0=&\sum_{i=0}^{n-1} Q_{t_i}(S_{t_{i+1}}-S_{t_i})+\sum_{i=0}^{n-1} S_{t_{i+1}}(Q_{t_{i+1}}-Q_{t_i})\\
%  =&\sum_{i=0}^{n-1} Q_{t_i}\left(\int_{t_i}^{t_{i+1}} f_sds+\int_{t_i}^{t_{i+1}} h_sdW_s\right)+\sum_{i=0}^{n-1} Q_{t_{i+1}}\int_{t_i}^{t_{i+1}} g_s d\ola B(ds, X_s) \\
%  &+\sum_{i=0}^{n-1} S_{t_{i+1}}(Q_{t_{i+1}}-Q_{t_i})-\sum_{i=0}^{n-1} (Q_{t_{i+1}}-Q_{t_i})\int_{t_i}^{t_{i+1}} g_s d\ola B(ds, X_s).
% \end{align*}
%When $|\Delta|$ goes to zero, $\sum_{i=0}^{n-1} Q_{t_i}\left(\int_{t_i}^{t_{i+1}} f_sds+\int_{t_i}^{t_{i+1}} h_sdW_s\right)+\sum_{i=0}^{n-1} Q_{t_{i+1}}\int_{t_i}^{t_{i+1}} g_s d\ola B(ds, X_s)$ converges to $\int_0^t Q_s dS_s$,  $\sum_{i=0}^{n-1} S_{t_{i+1}}(Q_{t_{i+1}}-Q_{t_i})$ converges to $\int_0^t S_s dQ_s$, and $\sum_{i=0}^{n-1} (Q_{t_{i+1}}-Q_{t_i})\int_{t_i}^{t_{i+1}} g_s d\ola B(ds, X_s)$ converges to $\langle Q, \int_0^\cdot g_s d\ola B(ds, X_s)\rangle_t=0.$\hfill
%\end{proof}
%
The following result of exponential integrability will be used in the proof the existence and uniqueness of the solutions to \eqref{bdsde} in Section 3.

\begin{lemma}\label{exponential} If $b$ and $\sigma$ are bounded measurable functions, we have
\[\E \int_0^T \exp\left(p q(t,X_t,X_t)\right) dt<\infty, \text{ for all } p >0.\]
In particular,
\[\E \exp\left(p \int_0^T q(t,X_t,X_t)dt\right)<\infty \text{ and } ~ \E \int_0^T q(t,X_t,X_t)^p dt<\infty, \text{ for all } p >0.\]

\end{lemma}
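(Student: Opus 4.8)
The plan is to exploit the growth bound {\bf (H)}, namely $q(t,X_t,X_t)\le K(1+2|X_t|^\kappa)$ with $\kappa<2$, so that $pq(t,X_t,X_t)\le pK+2pK|X_t|^\kappa$, and reduce everything to Gaussian-type moment bounds for the diffusion $X_t=X_t^{0,x}$. First I would recall that since $b$ and $\sigma$ are bounded measurable, the SDE \eqref{sde} has a unique strong solution and standard estimates give, for every $m\ge 1$, $\E\sup_{0\le t\le T}|X_t|^{m}\le C_m(1+|x|^m)<\infty$; more importantly, $X_t=x+\int_0^t b(X_s)ds+\int_0^t\sigma(X_s)dW_s$, so $|X_t|\le |x|+\|b\|_\infty T+\big|\int_0^t\sigma(X_s)dW_s\big|$, and the stochastic integral $M_t:=\int_0^t\sigma(X_s)dW_s$ is a continuous local martingale with $\langle M\rangle_t\le \|\sigma\|_\infty^2 t\le \|\sigma\|_\infty^2 T$. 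Hence $M_t$ has a uniformly bounded quadratic variation, and by the exponential martingale inequality (or a direct Gaussian comparison via time change / Dambis–Dubins–Schwarz) one gets $\E\exp(\lambda |M_t|^2)<\infty$ for all $\lambda<\frac{1}{2\|\sigma\|_\infty^2 T}$, and in fact $\E\exp(\lambda\sup_{0\le t\le T}|M_t|^2)<\infty$ for $\lambda$ small enough by Fernique's theorem or the reflection/maximal inequality for $M$.

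Next I would transfer this to the quantity we actually need. Since $\kappa<2$, for any $\lambda>0$ we have $|X_t|^\kappa\le \varepsilon|X_t|^2+C_{\varepsilon}$ with $\varepsilon$ as small as we like (Young's inequality); choosing $\varepsilon$ small enough that $2pK\varepsilon\cdot(\text{const})$ falls below the Fernique threshold for $M$, we obtain, using $|X_t|^2\le 3|x|^2+3\|b\|_\infty^2T^2+3|M_t|^2$,
\[
\exp\!\big(pq(t,X_t,X_t)\big)\le \exp\!\big(pK+2pK C_\varepsilon+2pK\varepsilon(3|x|^2+3\|b\|_\infty^2T^2)\big)\,\exp\!\big(6pK\varepsilon\sup_{0\le r\le T}|M_r|^2\big),
\]
and the right-hand side is an integrable random variable that does not depend on $t$. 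Taking expectations and then $\int_0^T(\cdot)\,dt$ (the bound being uniform in $t$, or invoking Tonelli/Fubini for the nonnegative integrand) yields $\E\int_0^T\exp(pq(t,X_t,X_t))dt<\infty$ for every $p>0$, which is the first claim.

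For the two ``in particular'' assertions: for the first, note $\int_0^T q(t,X_t,X_t)dt\le KT+2K\int_0^T|X_t|^\kappa dt$, and by Jensen applied to $x\mapsto e^{px}$ and the normalized measure $\frac{1}{T}dt$ on $[0,T]$,
\[
\exp\!\Big(p\int_0^T q(t,X_t,X_t)dt\Big)\le e^{pKT}\,\exp\!\Big(2pK\!\int_0^T|X_t|^\kappa dt\Big)\le e^{pKT}\,\frac{1}{T}\int_0^T\exp\!\big(2pKT|X_t|^\kappa\big)dt,
\]
and the same Young-plus-Fernique argument as above bounds $\E\exp(2pKT|X_t|^\kappa)$ uniformly in $t$, so the expectation is finite. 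For the second, $q(t,X_t,X_t)^p\le C_p(1+|X_t|^{p\kappa})$ and $\E\int_0^T|X_t|^{p\kappa}dt\le T\sup_{t\le T}\E|X_t|^{p\kappa}<\infty$ by the polynomial moment bound, so $\E\int_0^T q(t,X_t,X_t)^p dt<\infty$; alternatively this follows immediately from the first claim together with $x^p\le p!\,e^x$. The main obstacle, and the only place where $\kappa<2$ is genuinely used, is keeping the exponential constant $6pK\varepsilon$ strictly below the Fernique integrability threshold $\sim\frac{1}{\|\sigma\|_\infty^2 T}$ of $\sup_t|M_t|^2$: if $\kappa$ were equal to $2$ this could fail for large $p$, but since $\kappa<2$ the Young splitting makes $\varepsilon$ freely adjustable and the estimate closes for every $p>0$.
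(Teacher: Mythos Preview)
Your argument is correct and rests on the same two ingredients as the paper's proof: the decomposition $X_t=x+\int_0^t b(X_s)\,ds+M_t$ with $M_t=\int_0^t\sigma(X_s)\,dW_s$ having bounded quadratic variation $\langle M\rangle_T\le \|\sigma\|_\infty^2 T$, and the exponential martingale tail bound $P(\sup_{t\le T}|M_t|\ge x)\le 2\exp(-x^2/(2D_0))$. The only real difference is in how you cash in the condition $\kappa<2$. The paper goes directly: it computes $\E\exp(p\,\tilde N^\kappa)=\int_0^\infty P(\tilde N^\kappa\ge y/p)\,e^y\,dy$ and observes that the resulting integral $\int_0^\infty\exp(-c\,y^{2/\kappa}+y)\,dy$ converges precisely because $2/\kappa>1$. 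You instead insert a Young step $|X_t|^\kappa\le \varepsilon|X_t|^2+C_\varepsilon$ and then use that $\E\exp(\lambda\sup_t|M_t|^2)<\infty$ for $\lambda<1/(2D_0)$, with $\varepsilon$ absorbing any prefactor $p$. Both routes are short; the paper's saves the Young detour, while yours makes the role of $\kappa<2$ (free choice of $\varepsilon$) more visibly structural. One terminological quibble: $M$ is not Gaussian, so ``Fernique'' is not literally the right name---but the alternatives you list (exponential martingale inequality, DDS time change) are exactly what is needed, and the paper invokes the same inequality. Your Jensen argument for the first ``in particular'' claim is also the natural deduction (the paper leaves both corollaries implicit).
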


\begin{proof}
Note that  $X_t=x+\int_0^t b(X_s)ds+\int_0^t \sigma(X_s)dW_s.$
 Since $b$ is bounded and $|q(t,x,y)|\le K(1+|x|^\kappa+|y|^\kappa)$ with  $\kappa\in(0, 2)$  for all $t\in[0,T]$ by condition {\bf (H)}, it suffices to show that
\[\E \int_0^T \exp\left(p \left|\int_0^t\sigma(X_s) dW_s\right|^\kappa \right) dt<\infty, \text{ for all } p >0,\]
which can be reduced to show that
\begin{equation}\label{e2.7}
\E\left[\sup_{0\le t\le T} \exp\left(p \left|\int_0^t \sigma(X_s) dW_s\right|^\kappa \right) dt \right]<\infty.
\end{equation}
Denoting $N_t=\int_0^t \sigma(X_s) dW_s$, by the exponential inequality for martingales (see, e.g. \cite[Formula (A.5)]{nualart}),
we have for any $x>0$,
 \begin{equation}\label{e2.8}
 P\left(\sup_{0\le t\le T}|N_t|\ge x\right)\le 2\exp\left(-\frac{x^2}{2D_0}\right),
 \end{equation}
 where $D_0=T\|\sigma\|_\infty^2<\infty.$  Let $\tilde N=\sup_{0\le t\le T]}|N_t|$. The left-hand side of (\ref{e2.7}) is estimated as follows,
 \begin{align*}
&\E\left[\sup_{0\le t\le T} \exp\left(p \left|\int_0^t \sigma(X_s) dW_s\right|^\kappa \right) dt \right]=\E\left[e^{p \left|\tilde N\right|^\kappa}   \right]\\
&=\int_\R P\left(|\tilde N|^\kappa \ge \frac yp\right) e^y dy \le 1+\int_0^\infty 2\exp\left(-\frac{y^{\frac2\kappa}p^{-\frac2\kappa}}{2D_0} + y\right)dy,
 \end{align*}
 where the integral on the right-hand side is finite for all $p>0$ since $\kappa<2$.  The proof is concluded. \hfill
\end{proof}
%\begin{remark}
%The restriction of boundedness for $b$ and $\sigma$ cannot be relaxed, for instance, to linear growth without other conditions. When $b=0,\sigma(x)=bx, q(t, x,y)= 1+|x|^\alpha+|y|^\alpha$, we have that  $X_t=x\exp\left((W_T-W_t)^2-\frac12(T-t)\right)$, and clearly $\E[|X_t|^{\alpha p}]=\infty$ when $p$ is big enough.
%\end{remark}
\setcounter{equation}{0}
\section{Existence and uniqueness of  solutions to BDSDEs}

This section concerns the existence and uniqueness  theorem for the following BDSDE
\begin{equation}\label{bdsde1}
Y_t=\xi+\int_t^T f(s,Y_s,Z_s)ds+\int_t^T g(s, Y_s,Z_s)\ola B(ds, X_s)
-\int_t^T Z_sdW_s,\, t\in[0,T],
\end{equation}
where $(X_s=X_s^{0,x}, 0\le s\le T)$ is the unique solution to \eref{sde}.   Denote, for $p\ge 1$,
\begin{align*}
&S^p([0,T];\mathbb R)
=\Big\{h:\Omega\times [0,T]\to \RR; \text{ continuous}, h(t) \text{ is $\mathcal F_t$-measurable, and }
   \E\big[\sup_{0\le t\le T} |h(t)|^p\big]<\infty\Big\}\,,\\
&M^p([0,T];\mathbb R^l)
=\Big\{\varphi:\Omega\times [0,T]\to \RR^l;~ \varphi(t) \text{ is $\mathcal F_t$-measurable and }\E\int_0^T |\varphi(t)|^pdt<\infty\Big\}\,,\\
&Q^{2p}([0,T];\mathbb R)
=\Big\{g:\Omega\times [0,T]\to \RR;~ g(t) \text{ is $\mathcal F_t$-measurable and }\E\int_0^T |g(t)|^{2p} |q(t,X_t,X_t)|^p dt<\infty\Big\}\,,
      \end{align*}
where $\cF_t$ is given in \eqref{Ft}.

We will follow the standard procedure in \cite{pardouxpeng}.
First, as a preparation, we prove the following existence and uniqueness result when $f$ and $g$ are independent of $Y$ and $Z$.

\begin{proposition}\label{prop2.2}
Let $f\in M^{2}([0,T]; \mathbb R), g\in Q^{2}([0,T]; \mathbb R)$ and  $\xi\in L^{2}(\mathcal F_T)$.
Then the equation
\begin{equation}
Y_t=\xi+\int_t^T f(s)ds+\int_t^T g(s)d\ola B(ds, X_s)-\int_t^T Z_s
dW_s,~ t\in[0,T],\label{e.linear}
\end{equation}
has a unique solution $(Y,Z)\in S^{2}([0,T];\mathbb R)
\times  M^{2}([0,T];\mathbb R^{  d})$.
\end{proposition}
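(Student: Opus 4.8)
The plan is to use a martingale representation argument, which is the standard route for linear BDSDEs of this type. First I would define the $\mathcal G_T$-measurable random variable
\[
\eta := \xi + \int_0^T f(s)\,ds + \int_0^T g(s)\,\overleftarrow B(ds, X_s),
\]
and check it lies in $L^2(\mathcal G_T)$: the three terms are controlled respectively by $\xi\in L^2(\mathcal F_T)\subset L^2(\mathcal G_T)$, by $f\in M^2$ together with Jensen/Cauchy-Schwarz on $[0,T]$, and by $g\in Q^2$ together with the It\^o isometry for the It\^o-Kunita backward integral (whose quadratic variation is $\int_0^T q(s,X_s,X_s)\,ds$ by \eqref{eq-quad}), so that $\E\big[(\int_0^T g(s)\overleftarrow B(ds,X_s))^2\big] = \E\int_0^T g(s)^2 q(s,X_s,X_s)\,ds < \infty$. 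Then I would consider the martingale $M_t := \E[\eta \mid \mathcal G_t]$ with respect to the filtration $\mathcal G_t = \mathcal F_t^W \vee \mathcal F_T^B$. Since $\mathcal F_T^B$ is independent of $W$ and is present in $\mathcal G_0$, conditionally on $\mathcal F_T^B$ the filtration is just the Brownian filtration of $W$, so the martingale representation theorem (applied $\omega_B$-wise and then checked to be jointly measurable) yields a unique process $Z\in M^2([0,T];\mathbb R^d)$, adapted to $\mathcal G_t$, with $M_t = M_0 + \int_0^t Z_s\,dW_s$.

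Next I would define $Y$ by rearranging: set
\[
Y_t := \E\Big[\xi + \int_t^T f(s)\,ds + \int_t^T g(s)\,\overleftarrow B(ds, X_s)\,\Big|\,\mathcal G_t\Big]
= M_t - \int_0^t f(s)\,ds - \int_0^t g(s)\,\overleftarrow B(ds, X_s).
\]
Substituting $M_t = \eta - \int_t^T Z_s\,dW_s$ and rearranging gives precisely \eqref{e.linear}. The process $Y$ is continuous (as a sum of a continuous martingale, an absolutely continuous term, and the continuous It\^o-Kunita integral $t\mapsto \int_0^t g(s)\overleftarrow B(ds,X_s)$), and the $S^2$ bound $\E[\sup_{0\le t\le T}|Y_t|^2]<\infty$ follows from Doob's $L^2$ inequality applied to $M$ together with the $L^2$ bounds on the two integral terms already established above. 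I would also need to verify the $\mathcal F_t$-measurability (not just $\mathcal G_t$-measurability) of $Y_t$ and $Z_t$: the key point is that $\int_t^T g(s)\overleftarrow B(ds,X_s)$ is measurable with respect to $\mathcal F_{t,T}^B \vee \mathcal F_T^W$, and conditioning on $\mathcal G_t$ then collapses the forward $W$-information beyond $t$, leaving $\mathcal F_t = \mathcal F_t^W \vee \mathcal F_{t,T}^B$-measurability; this is the usual BDSDE subtlety and should be stated carefully.

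For uniqueness, given two solutions $(Y,Z)$ and $(Y',Z')$, I would apply the It\^o formula of Lemma \ref{itolemma} (the identity \eqref{e.3.5}) to $|Y_t - Y_t'|^2$. Since $f$ and $g$ appear identically in both equations, the difference $S_t := Y_t - Y_t'$ satisfies $dS_t = Z_t''\,dW_t$ backward with $S_T = 0$ and no drift or $\overleftarrow B$ term (here $Z'' = Z - Z'$), so \eqref{e.3.5} gives $\E|S_t|^2 + \E\int_t^T |Z_s''|^2\,ds = 0$, forcing $Y=Y'$ and $Z=Z'$. I expect the main obstacle to be the measurability bookkeeping: carefully justifying the conditional martingale representation with respect to the non-filtration-adapted structure (the backward filtration $\mathcal F_{t,T}^B$) and confirming that the resulting $Z$ is genuinely $\mathcal F_t$-measurable and that the stochastic integrals are all well-defined in the It\^o-Kunita sense — the analytic estimates themselves are routine given Lemma \ref{itolemma} and the isometry \eqref{eq-quad}.
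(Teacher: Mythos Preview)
Your proposal is correct and follows essentially the same route as the paper: define the $\mathcal G_t$-martingale $\E[\eta\mid\mathcal G_t]$, invoke martingale representation to produce $Z$, set $Y_t$ as the conditional expectation of the tail, and then verify $\mathcal F_t$-measurability and the $S^2\times M^2$ bounds. The only cosmetic differences are that the paper handles uniqueness by directly squaring the identity $\bar Y_t+\int_t^T\bar Z_s\,dW_s=0$ (rather than invoking Lemma~\ref{itolemma}) and cites BDG/H\"older instead of Doob for the $S^2$ estimate; your version is equivalent.
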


\begin{proof}
First we discuss the uniqueness.
Suppose $(Y^i,Z^i),i=1,2$, are two solutions in $M^{2}([0,T];\mathbb R) \times M^{2}([0,T];\mathbb R^{  d})$.  Denote $\bar Y=Y^1-Y^2$ and $\bar Z=Z^1-Z^2$.
Then \[\bar Y_t+\int_t^T \bar Z_s dW_s=0,\]
and hence \[\E (\bar Y_t^2)+\E\int_t^T |\bar Z_s|^2ds=0\]
because $\bar Y_t\in \cF_t=\cF_{t}^W\vee \cF_{t,T}^B$ and $\E^W(\bar Y_t\int_t^T \bar Z_sdW_s)=0$, where
$\E^W$ means the expectation taken in the probability space generated by $W$.  This  immediately implies the uniqueness.

Now, we consider the existence.
%We have
%\[
%\xi=E[\xi|\mathcal F_T^B]+\int_0^T r_sdW_s\,.
%\]
%Thus
%\[
%E^{\mathcal G_t}(\xi):=E[\xi|\mathcal G_t]=E[\xi|\mathcal F_T^B]+\int_t^T r_sdW_s\,.
%\]
Denote $d\ola B(ds, X_s)$ by $dM_s$ and let
\[
N_t=\E\left[\left.\xi+\int_0^T f(s)ds+\int_0^T g(s)dM_s\right|\cG_t
\right]\,,
\]
which is a martingale with respect to the filtration $\cG_t=\cF_t^W\vee\cF_T^B$.
Then by the   martingale representation theorem,  there exists a square integrable process $Z_t\in \mathcal G_t$ such  that
\begin{equation}
N_t=N_0+\int_0^t Z_sdW_s.\label{e.z}
\end{equation}
Letting $t=T$ we have  $N_T=N_0+\int_0^TZ_sdW_s$.
On the other hand, from the definition of $N_t$, we have
\[
N_T= \xi+\int_0^T f(s)ds+\int_0^T g(s)dM_s
 \,.
\]
Thus, we have
\begin{eqnarray*}
N_t
&=&N_0+\int_0^t Z_sdW_s=N_T-\int_t^T Z_sdW_s\\
&=&\xi+\int_0^T f(s)ds+\int_0^T g(s)dM_s-\int_t^T Z_sdW_s\,.
\end{eqnarray*}
Namely, we have
\begin{equation}\label{e3.4'}
\E\left[\left.\xi+\int_0^T f(s)ds+\int_0^T g(s)dM_s\right|\cG_t\right]=\xi+\int_0^Tf(s)ds+\int_0^Tg(s)dM_s-\int_t^T Z_sdW_s.
\end{equation}
Let
\begin{equation}
Y_t=\E\left[\left.\xi+\int_t^T f(s)ds+\int_t^T g(s)dM_s\right|\cG_t\right]\,,\label{e.y}
\end{equation}
then by \eqref{e3.4'},
\[Y_t=\xi+\int_t^Tf(s)ds+\int_t^T g(s)dM_s-\int_t^T Z_sdW_s.\]
Thus $(Y_t, Z_t)$ given by \eref{e.z} and \eref{e.y} satisfies \eref{e.linear}.

Now we show $(Y_t,Z_t)\in \mathcal F_t=\mathcal F_{t}^W\vee \mathcal F_{t,T}^B.$ Note that $Y_t\in \mathcal F_t$ because $\xi+\int_t^T f(s)ds+\int_t^T g(s)dM_s\in \mathcal F_{T}^W\vee\mathcal F_{t,T}^B$ and
its  expectation conditional on $\mathcal G_t=\mathcal F_{t}^W\vee\mathcal F_T^B$ is
measurable with respect to $\mathcal F_{t}^W\vee \mathcal F_{t,T}^B.$ \ Similarly, we have
$\int_t^T Z_sdW_s=\xi+\int_t^T f(s)ds+\int_t^T g(s)dM_s-Y_t$ is  $\mathcal F_{T}^W\vee\mathcal F_{t,T}^B$ measurable since the right-hand side is. By the martingale representation theory, $Z_s$ is $\cF_s^W\vee\cF_{t,T}^B$ measurable for $s\in[t,T]$, and hence  $Z_t   $ is $\cF_t$-measurable.

Finally, we show the square integrability of $Y$ and $Z$. By equations \eqref{eq-quad} and (\ref{e.y}), H\"older inequality and Burkholder-Davis-Gundy inequality, there exists a constant $C$  depending only on $T$ such that
\begin{align}
 \E \left(\sup_{0\le t\le T}|Y_t|^{2}\right)\le & C\left(\E(|\xi|^{2})+\E\int_0^T |f(s)|^{2}ds+\E\int_0^Tg^2(s)q(s,X_s,X_s)ds\right)\notag\\
 < & \infty. \label{bound-Y}
\end{align}
Hence $Y\in S^{2}([0,T];\mathbb R)$.
On the other hand, noting
\[\int_0^T Z_s
dW_s=-Y_0+\xi+\int_0^T f(s)ds+\int_0^T g(s)dM_s,\]
by Burkholder-Davis-Gundy inequality, we have
 $Z\in \mathbb M^{2}([0,T];\mathbb R^{  d})$. \hfill
\end{proof}

%\begin{remark}
% By the proof the above Proposition, the uniqueness also holds in the space $M^2([0,T];\R)\times M^2([0,T];\R^d)$.
%\end{remark}
Now we are ready to prove the main result Theorem \ref{BDSDE} in this section. Assume the following conditions.
\begin{description}
\item{\bf (A1)}\
Let the functions $b$ and $\si$ be bounded and
satisfy the global Lipschitz condition:
 \begin{equation}
 |b(x)-b(y)|+|\si(x)-\si(y)|\le K |x-y|  \,,\label{e.1.3}
 \end{equation}
where we use $|\cdot|$ to denote both the Euclidean norm for a vector in $\RR^d$ and
the Hilbert-Schmidt norm for a matrix in $\RR^{d\times d}$.
\end{description}
\begin{description}
\item{\bf (B1)}\ Let $f$ and $g$ be two given functions such that
$f(\cdot, 0,0)\in M^2([0,T];\R), \, g(\cdot, 0,0)\in Q^2([0,T];\R),$
and
\begin{eqnarray*}
|f(t,y_1,z_1)-f(t,y_2,z_2)|^2
&\le&  K (|y_1-y_2|^2+|z_1-z_2|^2)\,;\\
 |g(t,y_1,z_1 )-g(t,y_2,z_2 )|^2
&\le&  K|y_1-y_2|^2+\alpha_t(\omega)|z_1-z_2|^2   \,,
\end{eqnarray*}
where $\alpha_t(\omega) q(t,X_t,X_t)\le \alpha $ a.s. for some constant $\alpha\in (0,1).$
\end{description}

\begin{theorem}\label{BDSDE} Let  the conditions {\bf (H)}, {\bf (A1)} and {\bf (B1)} be satisfied.
Assume $\xi\in L^{p} (\mathcal F_{T})$ for some $p>2$, the BDSDE \eref{bdsde1}
has a unique solution $(Y,Z)\in \mathcal M^{2,\beta}$ for some $\beta>0$, where the space $\mathcal M^{2,\beta}$ is defined in (\ref{mspace}). Furthermore, $Y\in S^2([0,T];\R)$.
\end{theorem}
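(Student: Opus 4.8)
The plan is to follow the classical Pardoux--Peng scheme adapted to the doubly stochastic and spatially dependent setting, combining It\^o's formula (Lemma \ref{itolemma}) with the contraction mapping theorem in a carefully weighted Banach space. First I would define the space $\mathcal M^{2,\beta}$ to be the set of pairs $(Y,Z)$ with $Y$ continuous and $\cF_t$-measurable, $Z\in M^2([0,T];\R^d)$, equipped with a norm of the form
\[
\|(Y,Z)\|_{2,\beta}^2 = \E\int_0^T e^{\beta A_t}\bigl(|Y_t|^2+|Z_t|^2\bigr)\,dt,
\]
where $A_t = t + \int_0^t q(s,X_s,X_s)\,ds$ (or a similar increasing process absorbing the local-martingale part of the noise). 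The exponential weight $e^{\beta A_t}$ is essential: because $B$ carries a spatial parameter, the quadratic variation $q(s,X_s,X_s)$ is random and only exponentially integrable (Lemma \ref{exponential}), so a deterministic weight $e^{\beta t}$ would not suffice to kill the cross terms. I would then fix a pair $(U,V)\in\mathcal M^{2,\beta}$, set $f_s = f(s,U_s,V_s)$, $g_s = g(s,U_s,V_s)$, and invoke Proposition \ref{prop2.2} to get a unique solution $(Y,Z)$ of the linear BDSDE with these coefficients; this defines a map $\Phi(U,V) = (Y,Z)$ on $\mathcal M^{2,\beta}$, and a fixed point of $\Phi$ is exactly a solution of \eqref{bdsde1}. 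One must first check $\Phi$ maps $\mathcal M^{2,\beta}$ into itself, using the linear growth of $f,g$ (from {\bf (B1)}), the moment bound $\xi\in L^p$, $p>2$, and Lemma \ref{exponential} to control $\E\int_0^T e^{\beta A_t} g(t,0,0)^2 q(t,X_t,X_t)\,dt$ via H\"older with the extra integrability margin.

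The core estimate is the contraction. Given two inputs $(U^1,V^1),(U^2,V^2)$ with outputs $(Y^i,Z^i)$, write $\bar U = U^1-U^2$, etc., so that $\bar Y, \bar Z$ solve the linear BDSDE with driver $\delta f_s = f(s,U^1_s,V^1_s)-f(s,U^2_s,V^2_s)$, $\delta g_s = g(s,U^1_s,V^1_s)-g(s,U^2_s,V^2_s)$ and zero terminal value. Apply the It\^o formula \eqref{e.3.5} to $e^{\beta A_t}|\bar Y_t|^2$ (using the product rule, Lemma \ref{product}, since $e^{\beta A_t}$ has bounded variation in $t$ for each $\omega$), take expectations to kill the $dW$ and $\overleftarrow B$ martingale terms, and obtain, after rearranging,
\[
\E\int_0^T e^{\beta A_t}\Bigl(\beta|\bar Y_t|^2 + |\bar Z_t|^2 - q(t,X_t,X_t)|\delta g_t|^2\Bigr)dt
\le 2\,\E\int_0^T e^{\beta A_t}|\bar Y_t|\,|\delta f_t|\,dt + (\text{cross terms}).
\]
Now {\bf (B1)} gives $q\,|\delta g_t|^2 \le q\,K|\bar U_t|^2 + \alpha|\bar V_t|^2$ with $\alpha<1$, and $|\delta f_t|^2\le K(|\bar U_t|^2+|\bar V_t|^2)$; Young's inequality converts $2|\bar Y_t||\delta f_t|$ into $\epsilon|\bar Y_t|^2 q(t,X_t,X_t) + \epsilon^{-1} q^{-1}|\delta f_t|^2$ or a cleaner split, and here I would be careful to keep the $dt$ and the $q\,dt$ contributions separate since $A_t$ bundles both. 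Choosing $\beta$ large (to dominate the $dt$ part) and distributing the weights appropriately yields
\[
\E\int_0^T e^{\beta A_t}\bigl(|\bar Y_t|^2+|\bar Z_t|^2\bigr)dt \le \Bigl(\tfrac{1+\alpha}{2}+\tfrac{C}{\beta}\Bigr)\,\E\int_0^T e^{\beta A_t}\bigl(|\bar U_t|^2+|\bar V_t|^2\bigr)dt,
\]
which is a strict contraction for $\beta$ sufficiently large since $\tfrac{1+\alpha}{2}<1$. Banach's fixed point theorem then gives existence and uniqueness of $(Y,Z)\in\mathcal M^{2,\beta}$.

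Finally, for the last assertion $Y\in S^2([0,T];\R)$: once $(Y,Z)$ is known to lie in $\mathcal M^{2,\beta}$, I would view \eqref{bdsde1} as the linear equation \eqref{e.linear} with the now-fixed coefficients $f(s,Y_s,Z_s)$ and $g(s,Y_s,Z_s)$; these lie in $M^2$ and $Q^2$ respectively (by the growth conditions plus the just-established integrability of $Y,Z$ and Lemma \ref{exponential}), so Proposition \ref{prop2.2} applies and in particular the estimate \eqref{bound-Y} gives $\E[\sup_{0\le t\le T}|Y_t|^2]<\infty$. The main obstacle I anticipate is the contraction step: unlike the Brownian case in \cite{pardouxpeng}, the noise term's quadratic variation $q(s,X_s,X_s)$ is unbounded and only exponentially integrable, so the weight must be the random $e^{\beta A_t}$ rather than $e^{\beta t}$, and one has to track the two different ``clocks'' ($dt$ versus $q\,dt$) through the It\^o expansion and Young's inequality simultaneously — getting the constant in front of $\E\int e^{\beta A}(|\bar U|^2+|\bar V|^2)$ genuinely below $1$ is where the condition $\alpha<1$ in {\bf (B1)} and the choice of $\beta$ must be balanced delicately. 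A secondary subtlety is justifying that the solution of the linear equation from Proposition \ref{prop2.2} actually lands in the weighted space $\mathcal M^{2,\beta}$ (not merely in the unweighted $S^2\times M^2$), which again relies on the $L^p$, $p>2$, assumption on $\xi$ to buy room in a H\"older estimate against the exponential moments of $q$.
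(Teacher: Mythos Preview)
Your approach is essentially the paper's: define a Picard map via Proposition \ref{prop2.2}, apply the It\^o/product rule (Lemmas \ref{itolemma}--\ref{product}) to an exponentially weighted square, and close a contraction in a weighted $L^2$ space. You correctly identify both non-obvious points --- the random exponential weight in place of $e^{\beta t}$, and the role of $\xi\in L^p$ with $p>2$ together with Lemma \ref{exponential} in showing the map lands in $\mathcal M^{2,\beta}$. The last step ($Y\in S^2$ via Proposition \ref{prop2.2}) matches the paper exactly.

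There is one genuine refinement you are missing. Your proposed norm $\E\int_0^T e^{\beta A_t}(|Y_t|^2+|Z_t|^2)\,dt$ is \emph{symmetric} in $Y$ and $Z$, whereas the paper's norm \eqref{mspace} is asymmetric: the $Y$-component carries an extra factor $\tilde q_s = q(s,X_s,X_s)\vee 1$, the $Z$-component does not. This is not cosmetic. In the contraction step the $y$-Lipschitz bound on $g$ produces a term $K\,q(s,X_s,X_s)\,|\bar U_s|^2$ on the right-hand side; since $q$ is unbounded under {\bf (H)}, this term cannot be controlled by $\E\int e^{\beta A_s}|\bar U_s|^2\,ds$, and your displayed inequality with constant $\tfrac{1+\alpha}{2}+\tfrac{C}{\beta}$ does not follow. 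The paper builds the $\tilde q_s$ weight into the $Y$-part of the norm from the outset (and takes $\tilde q_s\ge 1$ so that the $dt$-contributions from $f$ are absorbed as well); the contraction constant then comes out as $Ka+\alpha$ for a free parameter $a>0$, which is $<1$ once $a$ is small and $\beta$ is chosen to match. A second, smaller point: to justify that the $dW$ and $\overleftarrow B$ integrals have zero expectation under the unbounded weight, the paper first truncates the exponent to $\tilde q_s^{(n)}=\tilde q_s\wedge n$, takes expectations, and passes to the limit by monotone convergence --- this localization is implicit in your sketch but needs to be made explicit.
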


\begin{remark}\label{link-MS}
In \cite{MS}, the authors considered the following BDSDE
\begin{equation}\label{bdsde-ms}
Y_s=\xi+\int_s^T f(r, Y_r, Z_r)dr+\int_s^T M(dr, \tilde g(r,Y_r,Z_r))-\int_s^T Z_r dW_r.
\end{equation}
Consider their case when $k=1$ and $l=d+1$.  If we let $M(t,x,y)=yB(t,x)$ for $x\in \mathbb R^d, y\in \mathbb R$ and $\tilde g(r,Y_r,Z_r)=(X_r, g(r, Y_r, Z_r))$, then equation (\ref{bdsde-ms}) is reduced to BDSDE (\ref{bdsde1}). Now the joint quadratic variation of $M$ is given by
\[\langle M(\cdot, x,y), M(\cdot, x',y')\rangle_t =\langle y B(\cdot, x), y'B(\cdot, x')\rangle_t =yy'\int_0^t q(s,x,x' )ds,\]
and thus the characteristic of the family of the local martingales $\{M(\cdot , x,y), (x,y)\in\mathbb R^{d+1}\}$ is $a(s,(x,y),(x',y'))=yy'q(s,x,x')$. In \cite{MS}, to obtain the existence and uniqueness of the solution to BDSDE \eqref{bdsde-ms}, the authors imposed the following condition (inequality (3) on page 5) on the characteristic $a$ when $k=1$
\begin{equation}\label{condition-ms}
|a(s,z,z)-a(s,z,z')-a(s,z',z)+a(s,z',z')|\le |z-z'|^2,
\end{equation}
where $z=(x,y)$ and $z'=(x',y')$.
This condition implies that $\sup\limits_{0\le s\le T,\, x\in\R^d}q(s,x,x)\le 1$ if we let $x=x'$, and that
$\sup\limits_{0\le s\le T, \,y\in \R} y^2|q(s, x,x)-2q(s,x,x')+q(s,x',x')|\le |x-x'|^2$ if we let $y=y',$ which further implies that $q(s,x,y)\equiv C(s)$ for some deterministic function $C(s)\in[0,1].$
\end{remark}

\begin{proof}
Define
\begin{align}\label{mspace}
 \mathcal M^{2,\beta}=\bigg\{(y,z): (\cF_t) \text{-adapted and } &\E\int_0^T \tilde q_s
\exp\left(\beta \int_s^T \tilde q_r dr\right)|y_s|^2ds\notag\\
&\qquad + \E\int_0^T
\exp\left(\beta \int_s^T \tilde q_r dr\right)|z_s|^2ds<\infty\bigg\}
\end{align}
with  $\beta>0$ to be determined later and $\tilde q_r=q(r,X_r,X_r)\vee 1$.

For any $(y,z)\in \mathcal M^{2,\beta}\subset M^{2}([0,T];\mathbb R)\times  M^{2}([0,T];
\mathbb R^{d})$, by condition {\bf(B1)} and Proposition \ref{prop2.2},
there exists a unique pair $(Y,Z)\in S^{2}([0,T];\mathbb R)
\times  M^{2}([0,T];\mathbb R^{d})$ so that
\begin{equation}\label{mapping}
Y_t=\xi+\int_t^T f(s,y_s,z_s)ds
+\int_t^T g(s,y_s,z_s)dM_s-\int_t^T Z_sdW_s,
\end{equation}
where we take the notation  $dM_s=d\ola B(s,X_s)$.

{\bf Step 1.} In this step, we shall show the mapping defined by \eref{mapping} maps  $\mathcal M^{2,\beta}$ to itself, i.e. $(Y,Z)\in \mathcal M^{2,\beta}$, for all $\beta>0.$

Denote $f_s:=f(s,y_s,z_s)$ and $g_s:=g(s,y_s,z_s)$.  Applying  Lemma \ref{itolemma} and Lemma \ref{product} to $ Y_t^2 \exp\left(\beta \int_0^t \tilde q_s^{(n)} ds\right)$ where $\tilde q_s^{(n)}=\tilde q_s\wedge n$, we have
\begin{align}\label{ito1}
& Y_t^2\exp\left(\beta \int_0^t \tilde q_s^{(n)} ds\right)\notag\\
=&\xi^2\exp\left(\beta \int_0^T \tilde q_s^{(n)} ds\right) -\int_t^T \beta \tilde q_s^{(n)} \exp\left(\beta \int_0^s \tilde q_r^{(n)} dr\right)Y_s^2ds
-\int_t^T \exp\left(\beta \int_0^s \tilde q_r^{(n)} dr\right)|Z_s|^2ds\notag\\
&+2\int_t^T \exp\left(\beta \int_0^s \tilde q_r^{(n)} dr\right)Y_s f_sds
+\int_t^T  \exp\left(\beta \int_0^s \tilde q_r^{(n)} dr\right) g_s^2 q(s,X_s,X_s)ds\notag\\
&- 2\int_t^T \exp\left(\beta\int_0^s \tilde q_r^{(n)} dr\right) Y_s Z_s dW_s+2\int_t^T \exp\left(\beta\int_0^s \tilde q_r^{(n)} dr\right) Y_s g_s dM_s.
\end{align}

The expectations of these two stochastic integrals on the right-hand side of the above equation are equal to zero. Here we only show that
\begin{equation}
\E\int_t^T \exp\left(\beta\int_0^s \tilde q_r^{(n)} dr\right) Y_s g_s dM_s=0,\label{integrable}
\end{equation}
and the other one can be proven in a similar way.
In fact, by Burkholder-Davis-Gundy inequality,
\begin{align*}
 &\E\sup_{0\le t\le T}\int_t^T \exp\left(\beta\int_0^s \tilde q_r^{(n)} dr\right) Y_s g_s dM_s\\
 \le& C\E\left(\int_0^T  Y_t^2 g_t^2 q(t,X_t,X_t)dt\right)^{\frac12}\\
 \le & C \E\left( \sup_{0\le t\le T} |Y_t|^2 \int_0^T (g^2(t,0,0)+y_t^2+\alpha_t(\omega)|z_t|^2)q(t,X_t,X_t) dt\right)^\frac12\\
 \le & C \left(\E(\sup_{0\le t\le T}|Y_t|^2)+\E \int_0^T (g^2(t,0,0)+y_t^2+\alpha_t(\omega) |z_t|^2)q(t,X_t,X_t) dt \right)\\
 <&\infty,
\end{align*}
where the last inequality follows from the facts $Y\in S^2([0,T];\R),\ (y,z)\in \mathcal M^{2,\beta}$, condition {\bf (B1)}, Lemma \ref{exponential} and H\"older inequality. This implies \eref{integrable}.

Now taking expectation in the equation \eref{ito1}, we have
\begin{align*}
& \E \left(Y_t^2\exp\left(\beta \int_0^t \tilde q_s^{(n)} ds\right)\right)+\E\int_t^T \beta \tilde q_s^{(n)} \exp\left(\beta \int_0^s \tilde q_r^{(n)} dr\right)Y_s^2ds
+\E\int_t^T \exp\left(\beta \int_0^s \tilde q_r^{(n)} dr\right)|Z_s|^2ds\\
=& \E\left(\xi^2\exp\left(\beta \int_0^T \tilde q_s^{(n)} ds\right)\right)+2\E\int_t^T \exp\left(\beta \int_0^s \tilde q_r^{(n)} dr\right)Y_s f_sds\\
&\qquad \qquad\qquad \qquad\qquad  \qquad\qquad \qquad+\E\int_t^T  \exp\left(\beta \int_0^s \tilde q_r^{(n)} dr\right) g_s^2 q(s,X_s,X_s)ds\\
\le & \E\left(\xi^2\exp\left(\beta \int_0^T \tilde q_s^{(n)} ds\right)\right)+2\sqrt K \E\int_t^T \exp\left(\beta \int_0^s \tilde q_r^{(n)} dr\right)|Y_s| \left(|f(s,0,0)|+|y_s|+|z_s|\right)ds\\
&+2\E\int_t^T  \exp\left(\beta \int_0^s \tilde q_r^{(n)} dr\right)\left(g^2(s,0,0)+K(y_s^2+\alpha_s(\omega)|z_s|^2)\right) q(s,X_s,X_s)ds\\
\le & \E\left(\xi^2\exp\left(\beta \int_0^T \tilde q_s^{(n)} ds\right)\right)+\sqrt K\E \int_t^T \tilde q_s^{(n)}\exp\left(\beta \int_0^s \tilde q_r^{(n)} dr\right)\left(\delta Y_s^2 +\frac{3}{\delta}(|f(s,0,0)|^2+|y_s|^2+|z_s|^2)\right)ds\\
&+K\E\int_t^T  \exp\left(\beta \int_0^s \tilde q_r^{(n)} dr\right)(g^2(s,0,0)+y_s^2+\alpha_s(\omega)|z_s|^2) q(s,X_s,X_s)ds,
\end{align*}
where in the last step we used the fact $2ab\le \delta a^2+\frac{1}{\delta} b^2$ for any $\delta>0.$ Choose $\delta>0$ sufficiently small such that $\beta>\sqrt K\delta,$ then we have
\begin{align}
& (\beta-\sqrt K\delta)\E\int_0^T  \tilde q_s^{(n)} \exp\left(\beta \int_0^s \tilde q_r^{(n)} dr\right)Y_s^2ds
+\E\int_0^T \exp\left(\beta \int_0^s \tilde q_r^{(n)} dr\right)|Z_s|^2ds\notag\\
\le & \E\left(\xi^2\exp\left(\beta \int_0^T \tilde q_s^{(n)} ds\right)\right)+\frac{3\sqrt K}{\delta} \int_0^T \exp\left(\beta \int_s^T \tilde q_r^{(n)} dr\right)(|f(s,0,0)|^2+|y_s|^2+|z_s|^2)ds\notag\\
&+2\E\int_0^T  \exp\left(\beta \int_0^s \tilde q_r^{(n)} dr\right)\left(g^2(s,0,0)+K(y_s^2+\alpha_s(\omega)|z_s|^2)\right) q(s,X_s,X_s)ds\notag\\
\le & \E\left(\xi^2\exp\left(\beta \int_0^T \tilde q_s ds\right)\right)+\frac{3\sqrt K}{\delta}\E \int_0^T \exp\left(\beta \int_0^s \tilde q_r dr\right)(|f(s,0,0)|^2+|y_s|^2+|z_s|^2)ds\notag\\
&+2\E\int_0^T  \exp\left(\beta \int_0^s \tilde q_rdr\right)\left(g^2(s,0,0)+K(y_s^2+\alpha_s(\omega)|z_s|^2)\right) q(s,X_s,X_s)ds. \label{e3.13}
\end{align}
Here $\E\left(\xi^2\exp\left(\beta \int_0^T \tilde q_s ds\right)\right)<\infty$ because of  Lemma \ref{exponential} and the condition $\xi \in L^p$ for some $p>2$.  The last two integrals in (\ref{e3.13}) are both finite because of Lemma \ref{exponential}, conditions {\bf (B1)} and the fact $(y, z)\in \mathcal M^{2,\beta}.$

Now let $n$ go to infinity, then denoting by $C(\delta)$ the sum of the terms on the right-hand side of (\ref{e3.13}), by the Monotone Convergence Theorem we have
\[(\beta-\sqrt K\delta)\E\int_0^T  \tilde q_s \exp\left(\beta \int_t^T \tilde q_r dr\right)Y_s^2ds
+\E\int_0^T \exp\left(\beta \int_t^T \tilde q_r dr\right)|Z_s|^2ds\le C(\delta)<\infty.\]
Therefore $(Y,Z)\in \mathcal M^{2,\beta}$, and the mechanism (\ref{mapping}) defines a mapping $\Psi$ from $ \mathcal M^{2,\beta}$ to itself:
 $(Y,Z)=\Psi(y,z)$.

{\bf Step 2.} In this step, we shall prove that $\Psi$ is a contraction mapping on $\mathcal M^{2,\beta}$ for sufficiently large $\beta$.

Let $(Y^i,Z^i)=\Psi(y^i,z^i)$ for $i=1,2$,  $\bar f_s= f(s,y^1_s,z^1_s)- f(s,y^2_s,z^2_s)$, 
and $\bar g_s= g(s,y^1_s,z^1_s)-g(s,y^2_s,z^2_s)$. Then
\[ Y^1_t-Y^2_t=\int_t^T \bar f_sds+\int_t^T \bar g_s dM_s-\int_t^T \bar Z_s dW_s.\]
We shall use a generic notation $\bar h=h^1-h^2$, where $h$ can be $Y, Z, y$ and $z$. Applying  Lemma \ref{itolemma} and Lemma \ref{product} to $\bar Y_t^2 \exp\left(\beta \int_0^t \tilde q_s^{(n)} ds\right)$, we have
\begin{align*}
&\bar Y_t^2\exp\left(\beta \int_0^t \tilde q_s^{(n)} ds\right)\\
=& -\int_t^T \beta \tilde q_s^{(n)} \exp\left(\beta \int_0^s \tilde q_r^{(n)} dr\right)\bar Y_s^2ds
-\int_t^T \exp\left(\beta \int_0^s \tilde q_r^{(n)} dr\right)|\bar Z_s|^2ds\\
&+2\int_t^T \exp\left(\beta \int_0^s \tilde q_r^{(n)} dr\right)\bar Y_s\bar f_sds
+\int_t^T  \exp\left(\beta \int_0^s \tilde q_r^{(n)} dr\right) \bar g_s^2 q(s,X_s,X_s)ds\\
&- 2\int_t^T \exp\left(\beta\int_0^s \tilde q_r^{(n)} dr\right) \bar Y_s \bar Z_s dW_s+2\int_t^T \exp\left(\beta\int_0^s \tilde q_r^{(n)} dr\right) \bar Y_s \bar g_s dM_s.
\end{align*}
As in {\bf Step 1}, we can show that these two stochastic integrals on the right-hand side of the above equation are integrable and hence the expectations of them are zero. Taking expectation and letting $n$ go to infinity, we have
\begin{align*}
&\E\left(\bar Y_t^2\exp\left(\beta \int_0^t \tilde q_s ds\right)\right)
+\E\int_t^T \beta \tilde q_s \exp\left(\beta \int_0^s \tilde q_r dr\right)\bar Y_s^2ds
+\E\int_t^T \exp\left(\beta \int_0^s \tilde q_r dr\right)|\bar Z_s|^2ds\\
=&2\E\left(\int_t^T \exp\left(\beta \int_0^s \tilde q_r dr\right)\bar Y_s\bar f_sds
+\int_t^T  \exp\left(\beta \int_0^s \tilde q_r dr\right) \bar g_s^2 q(s,X_s,X_s)ds\right)\\
\le & \E \left[\int_t^T \exp\left(\beta \int_0^s \tilde q_r dr\right)\left(2 \sqrt K|\bar Y_s|
(|\bar y_s|+|\bar z_s|)+K|\bar y_s|^{2} +\alpha_s(\omega)|\bar z_s|^2 q(s,X_s,X_s)\right) ds\right]\\
\le & \E\left[\int_t^T \exp\left(\beta \int_0^s \tilde q_r dr\right) \left(\frac 2a |\bar Y_s|^2 +aK|\bar y_s|^2+aK|\bar z_s|^2+ K|\bar y_s|^{2} q(s,X_s,X_s)+\alpha  |\bar z_s|^2\right) ds\right]\\
\le& \frac {2}a ~ \E\int_t^T \tilde q_s
\exp\left(\beta \int_0^s \tilde q_r dr\right) |\bar Y_s|^2ds
+ (Ka+K)  \E\int_t^T \tilde q_s \exp\left(\beta \int_0^s \tilde q_r dr\right)|\bar y_s|^2ds \\
& \quad \quad +(Ka+\alpha) \E\int_t^T \exp\left(\beta \int_0^s \tilde q_r dr\right) |\bar z_s|^2ds,
\end{align*}
where in the second inequality we used the fact $2xy\le \frac 1a x^2+a y^2$ for any $a>0.$
This implies
\begin{align*}
& \left(\beta-\frac{2}{a}\right)\E\int_0^t  \tilde q_s \exp\left(\beta \int_s^T \tilde q_r dr\right) \bar Y_s^2ds
+\E\int_0^t \exp\left(\beta \int_s^T \tilde q_r dr\right)|\bar Z_s|^2ds \\
 \le &   K(a+1) \E\int_0^t \tilde q_s \exp\left(\beta \int_s^T \tilde q_r dr\right)|\bar y_s|^2ds+(Ka+\alpha)  \E\int_0^t \exp\left(\beta \int_s^T \tilde q_r dr\right) |\bar z_s|^2ds.
\end{align*}
Since $0<\alpha<1$, we may choose $a>0$ so that $Ka+\alpha<1.$ Choose $\beta$ such that $\beta-\frac {2}a=\frac{K(a+1)}{Ka+\alpha}.$ Let $\rho=\frac{K(a+1)}{\beta-\frac{2}a}= Ka+\alpha<1,$
then we have
\[
\|(\bar Y,\bar Z)\|_{2,\beta}^2\le \rho \|(\bar y,\bar z)\|_{2,\beta}^2,
\]
where \[\|(y, z)\|_{2,\beta}^2=\E\int_0^T \tilde q_s
\exp\left(\beta \int_s^T \tilde q_r dr\right)|y_s|^2ds+ \frac{Ka+\alpha}{K(a+1)}\E\int_0^T
\exp\left(\beta \int_s^T \tilde q_r dr\right)|z_s|^2ds.\]
Therefore $\Psi$ is a contraction mapping in the space $\mathcal M^{2,\beta}$ endorsed with the norm $\|\cdot\|_{2,\beta}$.

{\bf Step 3.} Finally, the fact that $Y\in S^2([0,T];\R)$ can be proven in the same way as in Proposition \ref{prop2.2}.\hfill
\end{proof}

\setcounter{equation}{0}

%\section{Comparison theorem}

\setcounter{equation}{0}

\section{Moments of the solution}
In this section, we show that under suitable conditions, the solution to \eqref{bdsde1} has finite $p$-moments for $p>2$, which will be used in Section 5 to obtain the regularity of the solution.

Basic calculations yield the following lemma on $\phi_n(x)$, where $\phi_n(x)$ is an approximation of $|x|^{2p}$  at quadratic growth as $|x|$ tends to infinity. The lemma will be used in the proof of Theorem \ref{t.5.2}.
\begin{lemma}\label{l.5.1}
Fix $p>1$. For $x\in \R$, define
 $$\varphi_n(x)=(|x|\wedge n)^p+pn^{p-1}(|x|-n)^{+},$$
 then $\varphi_n(x)$ is a convex function with
 \[\varphi_n'(x)=p|x|^{p-1}I_{[|x|\le n]}+pn^{p-1}I_{[|x|>n]},\quad \varphi_n''(x)=p(p-1)|x|^{p-2}I_{[|x|\le n]},\]
 where $\varphi_n''(x)$ is defined as the Randon-Nikodym derivative $d\varphi_n'(x)/dx$. 
 
 Let $ \phi_n(x)=\varphi_n(x^2).$  Then we have
 \begin{equation*}%\label{e.5.1}
 |\phi_n'(x)\cdot x|\le 2p\phi_n(x),\quad |\phi_n''(x)\cdot x^2|\le 2p(2p-1)\phi_n(x).
 \end{equation*}
 Furthermore, we also have the estimations
 \begin{align*}
&|\phi_n'(x)|\le (2p)^{\frac12}\left(\phi_n(x)\right)^\frac12 \left(\phi_n''(x)\right)^\frac12,\\ %\label{e.5.2}\\
&|\phi_n'(x)|^{\frac{2p}{2p-1}} \le (2p)^{\frac{2p}{2p-1}} \phi_n(x),\\ %\label{e.5.3}\\
&|\phi_n''(x)|^{\frac{p}{p-1}} \le (2p)^{\frac{p}{p-1}}\phi_n(x),%\label{e.5.4},
\end{align*}
and for any $\gamma\in (0,1)$,
\begin{equation*}%\label{e.5.5}
 \phi_n''(x)|x|^{2\gamma}\le C_{p,\gamma} (\phi_n(x))^{1-\frac1p(1-\gamma)},
\end{equation*}
where $C_{p,\gamma}$ is a constant  depending only on $(p,\gamma)$.
\end{lemma}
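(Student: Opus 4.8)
The statement is entirely a one-variable computation, and the plan is to first pin down the structure of $\varphi_n$ on $[0,\infty)$ and then transport the estimates through the substitution $u=x^2$.

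First, $\varphi_n$ coincides with $u\mapsto u^p$ on $[0,n]$ and with its tangent line $u\mapsto n^p+pn^{p-1}(u-n)$ on $[n,\infty)$; as $u^p$ is convex and $C^1$ on $(0,\infty)$ and the two pieces agree in value and slope at $u=n$, the function $\varphi_n$ is convex and $C^1$ on $(0,\infty)$, with one-sided derivative $0$ at the origin (using $p>1$). This yields the stated formula for $\varphi_n'$, and continuity of $\varphi_n'$ at $u=n$ means $d\varphi_n'/du$ carries no atom, so it equals the locally finite measure $p(p-1)u^{p-2}I_{[0,n]}\,du$ (the exponent $p-2>-1$ gives local integrability near $0$), which is $\varphi_n''$. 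Then $\phi_n=\varphi_n\circ(x\mapsto x^2)$, and the chain rule gives $\phi_n'(x)=2x\,\varphi_n'(x^2)$ and, once $\phi_n'$ is checked to be continuous and locally absolutely continuous (in particular across $|x|=\sqrt n$), $\phi_n''(x)=4x^2\varphi_n''(x^2)+2\varphi_n'(x^2)\ge 0$ in the Radon--Nikodym sense; this regularity is also what makes $\phi_n$ amenable (after the usual mollification) to the It\^o formula of Lemma \ref{itolemma} in the proof of Theorem \ref{t.5.2}.

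The first three pointwise bounds follow from two elementary ``master'' inequalities for $\varphi_n$:
\[
u\,\varphi_n'(u)\le p\,\varphi_n(u),\qquad u^2\,\varphi_n''(u)\le p(p-1)\,\varphi_n(u)\qquad(u\ge 0).
\]
Both are identities on $[0,n]$ by $p$-homogeneity of $u^p$; on $[n,\infty)$ the first reduces, after inserting the affine formula, to $p(p-1)n^{p-1}(u-n)\ge 0$, and the second is trivial since $\varphi_n''\equiv0$ and $\varphi_n\ge0$ there. Substituting $u=x^2$ and using $\phi_n'(x)\,x=2x^2\varphi_n'(x^2)$ and $\phi_n''(x)\,x^2=4x^4\varphi_n''(x^2)+2x^2\varphi_n'(x^2)$ gives $|\phi_n'(x)\,x|\le2p\phi_n(x)$ and $|\phi_n''(x)\,x^2|\le2p(2p-1)\phi_n(x)$ directly. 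For the third, $|\phi_n'(x)|^2=4\varphi_n'(x^2)\big(x^2\varphi_n'(x^2)\big)\le4p\,\varphi_n'(x^2)\varphi_n(x^2)$, while $\phi_n''(x)\ge2\varphi_n'(x^2)$, so $2p\,\phi_n(x)\phi_n''(x)\ge4p\,\varphi_n(x^2)\varphi_n'(x^2)\ge|\phi_n'(x)|^2$.

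For the three power-type estimates I would split into $\{|x|\le\sqrt n\}$, where $\phi_n(x)=|x|^{2p}$, $\phi_n'(x)=2p|x|^{2p-2}x$, $\phi_n''(x)=2p(2p-1)|x|^{2p-2}$, and $\{|x|>\sqrt n\}$, where $\phi_n(x)=pn^{p-1}x^2-(p-1)n^p$, $\phi_n'(x)=2pn^{p-1}x$, $\phi_n''(x)=2pn^{p-1}$. On the first region every quantity involved is a monomial in $|x|$ of matching degree $2p$, so each inequality reduces to a comparison of numerical constants. On the second region one records $n^{p-1}x^2\le\phi_n(x)\le pn^{p-1}x^2$ and $\phi_n(x)>n^p$, and, writing $|x|=t\sqrt n$ with $t>1$, each inequality reduces to a scalar comparison holding for $p>1$: the $|\phi_n'|^{2p/(2p-1)}$-bound to $t^{2p/(2p-1)}\le pt^2-(p-1)$ for $t\ge1$ (the difference vanishes at $t=1$ and is nondecreasing on $[1,\infty)$), the $|\phi_n''|^{p/(p-1)}$-bound to $n^p\le\phi_n(x)$, and the $\gamma$-bound to $t^{2\gamma}\le t^{2-2(1-\gamma)/p}$, that is to $(1-\gamma)/p\le1-\gamma$. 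No step is deep; the only points needing care are the Radon--Nikodym bookkeeping at $u=n$ and near the origin when $1<p<2$ (where $\varphi_n'',\phi_n''$ blow up but remain locally integrable), and tracking the numerical constants through the affine region.
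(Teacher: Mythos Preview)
The paper does not give a proof of this lemma; it simply states that ``basic calculations yield the following lemma,'' so there is no proof to compare against, and your route via the two master inequalities $u\varphi_n'(u)\le p\varphi_n(u)$ and $u^2\varphi_n''(u)\le p(p-1)\varphi_n(u)$ is a clean way to organize exactly those calculations. The first block of estimates, the $|\phi_n'|^{2p/(2p-1)}$ bound, and the $\gamma$-bound all go through as you outline.

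There is, however, one genuine gap. On the region $|x|\le\sqrt n$ you write that ``each inequality reduces to a comparison of numerical constants'' and leave it at that. For the bound $|\phi_n''(x)|^{p/(p-1)}\le(2p)^{p/(p-1)}\phi_n(x)$ this comparison actually \emph{fails}: with $\phi_n(x)=|x|^{2p}$ and $\phi_n''(x)=2p(2p-1)|x|^{2p-2}$ one gets $|\phi_n''(x)|^{p/(p-1)}=[2p(2p-1)]^{p/(p-1)}|x|^{2p}$, which exceeds $(2p)^{p/(p-1)}|x|^{2p}$ whenever $p>1$. In other words the constant in the lemma as stated is too small; the correct constant is $(2p(2p-1))^{p/(p-1)}$ (or any $C_p\ge$ that). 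This does not affect the application in Theorem~\ref{t.5.2}, where only a generic constant depending on $p$ is used in \eqref{e.5.13}, but your write-up should flag the discrepancy rather than assert that the constants match.
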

%
%\begin{proof}
%  It is not hard to verify the properties of $\varphi_n(x)$. Now we show \eref{e.5.1}. Note that when $x\ge 0$,
%  \[\varphi_n''(x)x^2\le p(p-1)\varphi_n(x),\]
% and
% \begin{align*}
%  \varphi_n'(x)x&=px^{p}I_{[x\le n]}+pn^{p-1}xI_{[x>n]}\\
%  &\le px^{p}I_{[x\le n]}+[p^2n^{p-1}x+(1-p)pn^p]I_{[x>n]}\\
%  &= p\cdot \varphi_n(x).
% \end{align*}
% By chain rule, we have
% \[\phi_n'(x)=\varphi_n'(x^2)2x,\quad \phi_n''(x)=\varphi_n''(x^2)(4x^2)+2\varphi_n'(x^2).\]
% Then \eref{e.5.1} can be justified by direct computation.
% The inequalities of \eref{e.5.2} and \eref{e.5.3} are easy to check when $x^2\le n$ with $\phi_n(x)=|x|^{2p}$.
%
% From now on we assume $x^2>n$.
% \[\phi_n(x)=pn^{p-1}x^2-(p-1)n^p,\,\phi_n'(x)=2pn^{p-1}x \mbox{ and } \phi_n''(x)=2pn^{p-1}.\]
% Note that \[p\phi_n(x)=pn^{p-1}x^2+(p^2-p)(n^{p-1}x^2-n^p)\ge pn^{p-1}x^2,\]
% hence \[|\phi_n'(x)|=2pn^{p-1}|x|=(\phi_n''(x))^\frac12 (2pn^{p-1})^\frac12 |x|\le (2p)^{\frac12}\left(\phi_n(x)\right)^\frac12 \left(\phi_n''(x)\right)^\frac12.\]
% On the other hand,
% \begin{align*}
%p\phi_n(x)&=pn^{p-1}x^2+(p^2-p)(n^{p-1}x^2-n^p)\\
%&\ge pn^{p-1}x^2= pn^{p-1}x^{\frac{2p-2}{2p-1}}|x|^{\frac{2p}{2p-1}}\\
%&\ge pn^{p-1}n^{\frac{p-1}{2p-1}}x^{\frac{2p}{2p-1}}\\
%&=p(n^{p-1} x)^{\frac{2p}{2p-1}}=p (2p)^{-\frac{2p}{2p-1}}|\phi_n'(x)|^{\frac{2p}{2p-1}}.
% \end{align*}
%Thus \eref{e.5.2} and  \eref{e.5.3} are proved. Similarly we can prove  \eref{e.5.4}. Finally, \eref{e.5.5} can be deduced from the \eref{e.5.4} and the second inequality of \eref{e.5.1}.
%\hfill
%\end{proof}
%

\begin{theorem}\label{t.5.2}
In addition to the conditions {\bf (H)}, {\bf (A1)} and {\bf (B1)} we assume
\begin{description}
\item{(i)} $|g(t,y,z)|^2\le C(g^2(t,0,0)+|y|^{2\gamma})+\alpha_t(\omega)|z|^2 $\,,  $\gamma \in (0,1)$,  $\alpha_t(\omega)q(t,X_t,X_t)\le \alpha$ a.s. for some constant $\alpha<1$;
\item{(ii)}\   for some $p>1, \xi \in L^{2p}(\Omega,\mathcal F_T,\mathbb P), f(\cdot,0,0)\in M^{2p}([0,T];\R)$ and $g(\cdot, 0,0)\in Q^{2p}([0,T];\R).$
%\begin{equation}\label{bound-1}
%  \E \int_0^T |f(t,0,0)|^{2p}  dt<\infty, \text{ and \,} \E \int_0^T |g(t,0,0)|^{2p}|q(t,X_t,X_t)|^{p} dt<\infty.
%\end{equation}
\end{description}
Then
\begin{equation}
\E\left(\sup_{0\le t\le T}|Y_t|^{2p}+\left(\int_0^T|Z_t|^2dt\right)^{ p}\right)<\infty\,.
\label{e.yz}
\end{equation}
\end{theorem}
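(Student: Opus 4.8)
The plan is to run the classical Pardoux--Peng $L^{2p}$-estimate scheme, but carried out with the It\^o formula of Lemma \ref{itolemma} (the $\varphi$-version \eqref{e.ito} and the square version \eqref{e.3.5}) and with the truncating functions $\phi_n$ of Lemma \ref{l.5.1}, in two separate stages: first bound $\E\sup_t|Y_t|^{2p}$, then bound $\E\big(\int_0^T|Z_t|^2dt\big)^p$. Throughout one localizes with stopping times $\sigma_N=\inf\{t:|Y_t|\ge N\text{ or }\int_0^t|Z_s|^2ds\ge N\}\wedge T$ so that every stochastic integral that occurs is a genuine martingale with zero expectation and every ``absorb into the left-hand side'' manipulation is legitimate; since $Y$ is continuous and $\int_0^T|Z_s|^2ds<\infty$ a.s.\ we have $\sigma_N\uparrow T$, and one passes to the limit $N\to\infty$ by monotone convergence at the end. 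I suppress this localization below.

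First stage. Since $dY_t=-f(t,Y_t,Z_t)dt-g(t,Y_t,Z_t)\ola B(dt,X_t)+Z_tdW_t$, applying \eqref{e.ito} with $\varphi=\phi_n$ gives
\begin{align*}
\phi_n(Y_t)+\tfrac12\int_t^T\phi_n''(Y_s)|Z_s|^2ds={}&\phi_n(\xi)+\int_t^T\phi_n'(Y_s)f(s,Y_s,Z_s)ds\\
&+\tfrac12\int_t^T\phi_n''(Y_s)g(s,Y_s,Z_s)^2q(s,X_s,X_s)ds+(\text{martingales}).
\end{align*}
Take expectations. Using the Lipschitz bound on $f$, assumption (i) on $g$, the bound $\alpha_s(\omega)q(s,X_s,X_s)\le\alpha<1$, the inequalities of Lemma \ref{l.5.1} --- in particular $|\phi_n'(x)x|\le2p\phi_n(x)$, $|\phi_n'(x)|\le(2p)^{1/2}\phi_n(x)^{1/2}\phi_n''(x)^{1/2}$, $|\phi_n'(x)|^{2p/(2p-1)}\le(2p)^{2p/(2p-1)}\phi_n(x)$, $|\phi_n''(x)|^{p/(p-1)}\le(2p)^{p/(p-1)}\phi_n(x)$, and $\phi_n''(x)|x|^{2\gamma}\le C_{p,\gamma}\phi_n(x)^{1-\frac1p(1-\gamma)}$ --- together with Young's inequality, one absorbs into the left side a small multiple of $\phi_n''(Y_s)|Z_s|^2$ (coming from the $|Z_s|$-part of $f$ and from the $\alpha_s|Z_s|^2$-part of $g^2$, the latter contributing exactly $\tfrac\alpha2$, so that a positive coefficient $c_0=\tfrac{1-\alpha}{2}-O(\epsilon)$ survives). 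The point of the sublinear estimates is that every remaining occurrence of $\phi_n(Y_s)$ comes out with a purely deterministic coefficient, while the factors $q(s,X_s,X_s)$ get pushed onto data terms $|f(s,0,0)|^{2p}$, $g(s,0,0)^{2p}q^p$ and $q^{p/(1-\gamma)}$. This yields
\[
\E\phi_n(Y_t)+c_0\,\E\int_t^T\phi_n''(Y_s)|Z_s|^2ds\le\E|\xi|^{2p}+C\E\int_t^T\phi_n(Y_s)ds+A,
\]
where $A<\infty$ by assumption (ii) and Lemma \ref{exponential} (which gives $\E\int_0^Tq(s,X_s,X_s)^{p/(1-\gamma)}ds<\infty$). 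Gronwall's lemma bounds $\E\phi_n(Y_t)$ uniformly in $t,n$, hence also $\E\int_0^T\phi_n''(Y_s)|Z_s|^2ds$ and $\E\int_0^T\phi_n''(Y_s)g(s,Y_s,Z_s)^2q\,ds$. Returning to the It\^o identity, taking $\sup_t$ and then expectations, and estimating the two martingale terms by Burkholder--Davis--Gundy --- e.g.\ $\E\sup_t\big|\int_t^T\phi_n'(Y_s)Z_sdW_s\big|\le C\,\E\big(\sup_t\phi_n(Y_t)\big)^{1/2}\big(\int_0^T\phi_n''(Y_s)|Z_s|^2ds\big)^{1/2}$ via $|\phi_n'|^2\le2p\,\phi_n\phi_n''$, and similarly for the $\ola B$-integral using assumption (i) --- and absorbing $\tfrac12\E\sup_t\phi_n(Y_t)$, we obtain $\E\sup_t\phi_n(Y_t)\le C$ uniformly in $n$. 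Since $\phi_n(x)\uparrow|x|^{2p}$, monotone convergence gives $\E\sup_{0\le t\le T}|Y_t|^{2p}<\infty$.

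Second stage. The bound on $\int_0^T\phi_n''(Y_s)|Z_s|^2ds$ only controls $\int_0^T|Y_s|^{2p-2}|Z_s|^2ds$, so for the $Z$-estimate one instead applies \eqref{e.3.5} to $Y_t^2$:
\begin{align*}
\int_0^T|Z_s|^2ds={}&\xi^2-Y_0^2+2\int_0^TY_sf(s,Y_s,Z_s)ds+\int_0^Tg(s,Y_s,Z_s)^2q(s,X_s,X_s)ds\\
&+2\int_0^TY_sg(s,Y_s,Z_s)\,\ola B(ds,X_s)-2\int_0^TY_sZ_sdW_s.
\end{align*}
Bound $\int_0^Tg^2q\,ds\le\alpha\int_0^T|Z_s|^2ds+R$ with $R=C\int_0^T\big(g(s,0,0)^2+|Y_s|^{2\gamma}\big)q\,ds$, which has a finite $p$-th moment by assumption (ii), Lemma \ref{exponential}, H\"older and the first-stage bound on $\sup_t|Y_t|$. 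Move $\alpha\int_0^T|Z_s|^2ds$ to the left, raise to the power $p$, take expectations, and control the two stochastic integrals by the $L^p$-Burkholder--Davis--Gundy inequality (their quadratic variations are bounded by $(\sup_t|Y_t|^2)(\alpha\int_0^T|Z_s|^2ds+R)$ and $(\sup_t|Y_t|^2)\int_0^T|Z_s|^2ds$ respectively). Young's inequality is then used to collect all terms of the form $\E(\int_0^T|Z_s|^2ds)^p$ --- arising from the $|Z_s|$-part of $f$, from $\alpha\int|Z|^2$, and from both Burkholder terms --- with a total coefficient strictly less than $(1-\alpha)^p$, so they can be absorbed; the remaining terms are finite by assumption (ii), Lemma \ref{exponential} and $\E\sup_t|Y_t|^{2p}<\infty$. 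This gives $\E\big(\int_0^T|Z_t|^2dt\big)^p<\infty$ and completes the proof.

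The main obstacle is the bookkeeping in the two absorption steps. In the first stage one must check that the Young splittings applied to $\tfrac12\phi_n''(Y_s)g^2q$ genuinely detach $q(s,X_s,X_s)$ from $\phi_n(Y_s)$ --- this is precisely what the sublinear estimates $\phi_n''|x|^{2\gamma}\lesssim\phi_n^{1-\frac1p(1-\gamma)}$ and $\phi_n''\lesssim\phi_n^{(p-1)/p}$ of Lemma \ref{l.5.1} are tailored for, together with the integrability $\E\int_0^Tq^{p/(1-\gamma)}ds<\infty$ from Lemma \ref{exponential} --- so that a deterministic Gronwall argument applies. In the second stage the delicate point is choosing the Young parameters so that the several copies of $\E(\int_0^T|Z_s|^2ds)^p$ on the right-hand side can all be absorbed against $(1-\alpha)^p$, using $0<\alpha<1$ in an essential way. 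In both stages the localization by $\sigma_N$ is what makes these absorptions rigorous.
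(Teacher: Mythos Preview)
Your proof is correct and, in its first stage, follows the paper almost verbatim: apply It\^o's formula \eqref{e.ito} to $\phi_n(Y_t)$, use the sublinear estimates of Lemma \ref{l.5.1} together with Young's inequality to detach $q(s,X_s,X_s)$ from $\phi_n(Y_s)$, run Gronwall to bound $\sup_t\E\phi_n(Y_t)$ uniformly in $n$, and then bootstrap to $\E\sup_t\phi_n(Y_t)$ via BDG. The only cosmetic difference is that you localize with stopping times, whereas the paper instead verifies directly that the stochastic integrals are true martingales by exploiting the linear growth of $\phi_n'$ and the a priori fact that $(Y,Z)\in\mathcal M^{2,\beta}$, $Y\in S^2$.

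The second stage, however, takes a genuinely different route. The paper does \emph{not} work directly with the solution $(Y,Z)$: it goes back to the Picard iterates $(Y^{(n)},Z^{(n)})$ from the contraction-mapping proof of Theorem \ref{BDSDE}, applies the square It\^o formula to $Y^{(n)}$ with $(y,z)=(Y^{(n-1)},Z^{(n-1)})$ frozen in $f,g$, and obtains a recursion
\[
\E\Big(\int_0^T|Z^{(n)}_s|^2ds\Big)^p\le \rho\,\E\Big(\int_0^T|Z^{(n-1)}_s|^2ds\Big)^p+A', \qquad \rho<1,
\]
which is bounded inductively from $Z^{(0)}=0$ and then passed to the limit by Fatou. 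Your approach instead applies \eqref{e.3.5} directly to $Y_t^2$, moves $\alpha\int_0^T|Z_s|^2ds$ to the left, raises to the $p$-th power, and absorbs the remaining $(\int|Z|^2)^p$-contributions from the $f$-term and the two BDG terms against $(1-\alpha)^p$, with the stopping-time localization guaranteeing that the absorption is not $\infty-\infty$. Both arguments hinge on $\alpha<1$ in the same essential way. Your route is more self-contained (it does not revisit the construction of the solution) and the localization makes the ``absorb'' step rigorous without any a priori $L^p$-integrability of $\int|Z|^2$; the paper's route has the virtue of reusing machinery already set up in Section~3, but at the cost of tracking the Picard sequence.
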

\begin{proof}
Let $\phi_n(x)$ be the one defined in Lemma \ref{l.5.1}. Note that $\phi_n(x)$ is convex  and $d\phi_n'(x)=\phi_n''(x)dx$. Applying Lemma \ref{itolemma} to $\phi_n(Y_t)$, we have
\begin{align}\label{ito}
&\phi_n(Y_t)+\frac12\int_t^T \phi_n''(Y_s)|Z_s|^2ds
= I_1+\int_t^T \phi_n'(Y_s)\left[g(s,Y_s,Z_s)\ola B(ds, X_s)-Z_s dW_s\right]\,,
\end{align}
where
\begin{align}
I_1:=&\phi_n(\xi)+\int_t^T \phi_n'(Y_s)f(s,Y_s,Z_s)ds+\frac12\int_t^T \phi_n''(Y_s)g^2(s,Y_s,Z_s)q(s,X_s,X_s)ds\notag \\
\le & |\xi|^{2p}+C\int_t^T |\phi_n'(Y_s)|(|f(s,0,0)|+|Y_s|+|Z_s|)ds\nonumber\\
&+C\int_t^T \phi_n''(Y_s)(g^2(s,0,0)+|Y_s|^{2\gamma})q(s,X_s,X_s)ds+\frac12\alpha\int_t^T \phi_n''(Y_s)|Z_s|^2ds.\nonumber
\end{align}
Therefore,
\begin{align}\label{e.5.9}
&\phi_n(Y_t)+\frac12\int_t^T \phi_n''(Y_s)|Z_s|^2ds\notag\\
\le &|\xi|^{2p}+C\int_t^T |\phi_n'(Y_s)|(|f(s,0,0)|+|Y_s|+|Z_s|)ds\nonumber\\
&+C\int_t^T \phi_n''(Y_s)(g^2(s,0,0)+|Y_s|^{2\gamma})q(s,X_s,X_s)ds+\frac12 \alpha\int_t^T \phi_n''(Y_s)|Z_s|^2ds\nonumber\\
&+\int_t^T \phi_n'(Y_s)\left[g(s,Y_s,Z_s)\ola B(ds, X_s)-Z_s dW_s\right].
\end{align}

Since $\phi_n'(x)$ is at linear growth when $|x|$ goes to infinity, by the conditions on $g$ and the facts $(Y,Z)\in \mathcal M^{2,\beta}, Y\in S^2([0,T];\R)$, we can prove
\[\E\int_t^T \phi_n'(Y_s)\left[g(s,Y_s,Z_s)\ola B(ds, X_s)-Z_s dW_s\right]=0\]
in the same spirit of proof for equation \eref{integrable}.
Therefore, by taking expectation of \eref{e.5.9} we have
\begin{align}
&\E(\phi_n(Y_t))+\frac12\E\int_t^T \phi_n''(Y_s)|Z_s|^2ds\notag\\
\le & \E|\xi|^{2p}+C\E\int_t^T |\phi_n'(Y_s)|(|f(s,0,0)|+|Y_s|+|Z_s|)ds\nonumber\\
&+C\E\int_t^T \phi_n''(Y_s)(g^2(s,0,0)+|Y_s|^{2\gamma})q(s,X_s,X_s)ds+\frac12 \alpha\int_t^T \phi_n''(Y_s)|Z_s|^2ds\nonumber\\
\le & \E|\xi|^{2p}+C\E\int_t^T \left(|\phi_n(Y_s)|^{\frac{2p-1}{2p}}|f(s,0,0)|+\phi_n(Y_s)+\left(\phi_n(Y_s)\right)^{\frac12}\left(\phi_n''(Y_s)\right)^{\frac12}|Z_s|)\right) ds\nonumber\\
&+C\E\int_t^T \left((\phi_n(Y_s))^{\frac{p-1}{p}}g^2(s,0,0)+(\phi_n(Y_s))^{1-\frac1p(1-\gamma)}\right)q(s,X_s,X_s)ds+\frac12 \alpha\int_t^T \phi_n''(Y_s)|Z_s|^2ds,\label{e.5.10}
\end{align}
where  the last inequality follows from Lemma \ref{l.5.1}. By Young's inequality, we have
\begin{equation}\label{e.5.11}
|\phi_n(Y_s)|^{\frac{2p-1}{2p}}|f(s,0,0)|\le \frac{2p-1}{2p} |\phi_n(Y_s)|+\frac{1}{2p}|f(s,0,0)|^{2p},
\end{equation}
\begin{equation}\label{e.5.12}
     \left(\phi_n(Y_s)\right)^{\frac12}\left(\phi_n''(Y_s)\right)^{\frac12}|Z_s|\le \frac1a\phi_n(Y_s)+a\phi_n''(Y_s)|Z_s|^2,\quad  \forall a>0,
    \end{equation}
and
    \begin{eqnarray}
&&(\phi_n(Y_s))^{\frac{p-1}{p}} g^2(s,0,0)q(s,X_s,X_s)+(\phi_n(Y_s))^{1-\frac1p(1-\gamma)}q(s,X_s,X_s)
\label{e.5.13} \\
&\le& \frac{p-1}{p}\phi_n(Y_s)+\frac 1p  |g(s,0,0)|^{2p}q(s,X_s,X_s)^{ p}+\frac{p-1+\gamma}{p}\phi_n(Y_s)
+\frac{1-\gamma}{p}q(s,X_s,X_s)^{\frac{p}{1-\gamma}}\,.\nonumber
\end{eqnarray}
Choosing sufficiently small $a$, we may find constants $\theta<1$ and $C<\infty$ independent of $n$, such that after substituting \eref{e.5.11}-\eref{e.5.13} into \eref{e.5.10}, by Lemma \ref{exponential} and conditions for $f(\cdot, 0,0)$ and $g(\cdot, 0,0)$, we have
\begin{align}\label{ito-1}
 &\E (\phi_n(Y_t))+\frac12\E\int_t^T \phi_n''(Y_s)|Z_s|^2ds\notag\\
\le &C+C \int_t^T \E\left(\phi_n(Y_s) \right)ds +\frac12\theta \E\int_t^T \phi_n''(Y_s) |Z_s|^2 ds.
\end{align}
All the terms in the above equation are finite, since $\phi_n(x)$ is at quadratic growth when $|x|\to \infty$ and $\phi_n''(x)$ is bounded. Then it follows from the  Gronwall's Lemma that, for all $n\in \mathbb N^+$, $$\sup_{0\le t\le T}\E \left(\phi_n(Y_t)\right)<Ce^{CT},$$
and hence
$$\sup_{0\le t\le T}\E \left(\phi_n(Y_t)\right)+\E\int_0^T \phi_n''(Y_s)|Z_s|^2ds<C,$$
for some constant $C$ independent of $n$.
Letting $n$ go to infinity and noting that the derivatives $\phi_n^{(i)}(x)\nearrow (|x|^{2p})^{(i)}$ for $i=0,1,2$, we have
\begin{equation}\label{e5.14}
\sup_{0\le t\le T}\E(|Y_t|^{2p})+\E\int_0^T |Y_t|^{2p-2}|Z_t|^2dt<\infty.
\end{equation}

Therefore, by \eref{ito} and the Burkholder-Davis-Gundy  inequality, we have
\begin{align*}
\E\left(\sup_{0\le t\le T}\phi_n(Y_t)\right)\le & C+ C\E\left(\int_0^T (\phi_n'(Y_s))^2\left(g^2(s,Y_s,Z_s)q(s,X_s,X_s)+|Z_s|^2\right)ds\right)^\frac12\\
\le &C + C\E\left[\sup_{0\le t\le T}(\phi_n(Y_t))^{\frac12}\left(\int_0^T |Y_s|^{2p-2}\left(g^2(s,Y_s,Z_s)q(s,X_s,X_s)+|Z_s|^2\right)ds\right)^\frac12\right]\\
\le & C+\frac12\E\left(\sup_{0\le t\le T}\phi_n(Y_t)\right)+ C\E\int_0^T |Y_s|^{2p-2}\left(g^2(s,Y_s,Z_s)q(s,X_s,X_s)+|Z_s|^2\right)ds\\
\le & A+\frac12\E\left(\sup_{0\le t\le T}\phi_n(Y_t)\right),
\end{align*}
where $A<\infty$ is a constant independent of $n$, noting that
\begin{align*}
&\E\int_0^T |Y_s|^{2p-2}\left(g^2(s,Y_s,Z_s)q(s,X_s,X_s)+|Z_s|^2\right)ds\\
\le & \E\int_0^T |Y_s|^{2p-2}\left(\left[C (g(s,0,0)^2+ |Y_s|^{2\gamma} )+\alpha_t |Z_s|^2\right] q(s,X_s,X_s)+|Z_s|^2\right)ds\\
\le& C\E\int_0^T |Y_s|^{2p-2} g^2(s,0,0) q(s,X_s,X_s) ds +C\E\int_0^T |Y_s|^{2p-2+2\gamma}ds +2 \E \int_0^T |Y_s|^{2p-2}|Z_s|^2ds
\end{align*}
is finite  by H\"older's inequality, \eqref{e5.14} and the condition that $g(\cdot, 0,0)\in Q^{2p}([0,T];\R)$.

Thus we have, letting $n\to \infty,$
\begin{equation}
\E\left(\sup_{0\le t\le T}|Y_t|^{2p}\right)<\infty.\label{y-inequality}
\end{equation}

Let $(Y^{(n)}, Z^{(n)})_{n\in \mathbb N}$, where $(Y^{(0)},Z^{(0)})=(0,0)$, be a sequence of  processes generated by the mapping \eref{mapping}. Then $(Y^{(n)}, Z^{(n)})$ converges to the solution of \eqref{bdsde1} in the space $\mathcal M^{2,\beta}$ by Theorem \ref{BDSDE}. Denote $(Y^{(n-1)}, Z^{(n-1)})$ by $(y,z)$ and $(Y^{(n)}, Z^{(n)})$ by $(Y,Z)$. Applying It\^o's formula to $Y_T^2$, we have
\begin{align*}
\int_0^T |Z_s|^2ds
\le & |\xi|^2 +2\int_0^T Y_s f(s,y_s,z_s)ds+\int_0^T g^2(s,y_s,z_s)q(s,X_s,X_s)ds\\
&+2\int_0^T Y_s\left[g(s,y_s,z_s)\ola B(ds, X_s)-Z_s dW_s\right].
\end{align*}

For any $\delta>0$,  using condition (i) and the fact that we can find $C(\delta)$ for any $\delta>0$ such that $(b+a_1+\dots+a_n)^p\le (1+\delta)b^p+C(\delta) (a_1^p+\dots+a_n^p)$, we have
\begin{align*}
&\left(\int_0^T |Z_s|^2ds\right)^p\le (1+\delta) \left(\alpha\int_0^T |z_s|^2ds\right)^p+C(\delta)\Bigg[1+|\xi|^{2p}+\left|\int_0^T Y_sg(s,y_s,z_s)\ola B(ds, X_s)\right|^p\\
&   +\left(\int_0^T (g^2(s,0,0)+|y_s|^{2\gamma})q(s,X_s,X_s)ds\right)^{p}+ \left(\int_0^T |Y_s| |f(s,y_s,z_s)|ds \right)^p+ \left| \int_0^T Y_s Z_s dW_s \right|^p\Bigg].
\end{align*}

Taking expectation on both sides and noting that $\sup\limits_n\sup\limits_{0\le t\le T}\E|Y_t^{(n)}|^{2p}<\infty$, by \eqref{y-inequality}, Lemma \ref{exponential} and conditions (i), (ii), we can find some constants $C'(\delta)$ and $C''(\delta)$
% that depend only on $\delta, T, C, \E|\xi|^{2p}$ and $\sup\limits_n\E\sup\limits_{0\le t\le T}\E|Y_t^{(n)}|^{2p}$,
 such that
\begin{align*}
&\left(\int_0^T |Z_s|^2ds\right)^p\\
\le& (1+\delta) \left(\alpha\int_0^T |z_s|^2ds\right)^p+C'(\delta)\Bigg[1+\E\left(\int_0^T |Y_s| (f(s,0,0)+|y_s|+|z_s|)ds \right)^p \\
&+ \E\left(\int_0^T |Y_s|^2(1+|y_s|^{2\gamma}+\alpha_s |z_s|^2)q(s,X_s,X_s)ds\right)^\frac p2
+\E\left( \int_0^T |Y_s|^2 |Z_s|^2ds \right)^\frac p2\Bigg]\\
\le& (1+\delta) \alpha^p\E\left(\int_0^T |z_s|^2ds\right)^p+C''(\delta)\Bigg[1+\E\left(\int_0^T |Y_s| |z_s|ds \right)^p\\
&\qquad\qquad\qquad\qquad +\E\left( \int_0^T |Y_s|^2 |z_s|^2ds \right)^\frac p2+\E\left( \int_0^T |Y_s|^2 |Z_s|^2ds \right)^\frac p2\Bigg].
\end{align*}

By Young's inequality, for any $a>0$,
\begin{align*}
&\E\left(\int_0^T |Y_s| |z_s|ds \right)^p+\E\left( \int_0^T |Y_s|^2 |z_s|^2ds \right)^\frac p2+\E\left( \int_0^T |Y_s|^2 |Z_s|^2ds \right)^\frac p2\\
\le& \frac 1a\left[\E\left(\int_0^T  |Y_s|^2ds \right)^p+2T\E \sup_{0\le s\le T }|Y_s|^{2p}\right]+2a \E\left(\int_0^T  |z_s|^2ds \right)^p+a \E\left(\int_0^T  |Z_s|^2ds \right)^p\\
\le & C(a)+2a \E\left(\int_0^T  |z_s|^2ds \right)^p+a \E\left(\int_0^T  |Z_s|^2ds \right)^p.
\end{align*}
%where $C(a)<\infty$ is a constant depending on $a.$

Therefore,
\begin{align*}
(1- aC''(\delta))  \E\left(\int_0^T |Z_s|^2ds\right)^p \le [(1+\delta) \alpha^p+2aC''(\delta)]\E\left(\int_0^T |z_s|^2ds\right)^p+A(\delta,a).
\end{align*}
 One can choose $\delta$ and $a$ small enough such that $(1+\delta)\alpha^p+3aC''(\delta)<1$. Denote $\rho=\frac{(1+\delta)\alpha^p+2aC''(\delta)}{1-aC''(\delta)}$ and $A'(\delta, a)=\frac{A(\delta,a)}{1-aC''(\delta)}$, then $0<\rho<1$ and $A'(\delta,a)<\infty$, and we have
\begin{equation*}
\E\left(\int_0^T |Z_s^{(n)}|^2ds\right)^p\le \rho \E\left(\int_0^T |Z_s^{(n-1)}|^2ds\right)^p +A'(\delta,a).
\end{equation*}
This yields
\begin{equation}\label{z-inequality}
\E\left(\lim_{n\to\infty}\int_0^T |Z_s^{(n)}|^2ds\right)^p\le \liminf_{n\to\infty} \E\left(\int_0^T |Z_s^{(n)}|^2ds\right)^p<\infty.
\end{equation}
The inequality \eref{e.yz} now follows from \eref{y-inequality} and \eref{z-inequality}.\hfill
\end{proof}

\setcounter{equation}{0}
\section{BDSDEs and semilinear SPDEs }

In this section, under proper conditions, we will obtain the regularity of the solution to the BDSDE, and  then establish the relationship between the SPDE
\begin{align}
u(t,x)=\phi(x)+&\int_t^T \left[\mathscr L u(s,x)+f(s,x,u(s,x), \nabla u(t,x)\sigma(x))\right]ds\notag\\
&+\int_t^T g(s,x,u(s,x), \nabla u(t,x)\sigma(x))\ola B(ds,x),\, t\in [0, T], \label{spde'}
\end{align}
and the BDSDE
\begin{align}
Y_s^{t,x}=&\phi(X_T^{t,x})+\int_s^T f(r, X_r^{t,x},Y_r^{t,x},Z_r^{t,x})dr\notag\\
&\qquad+\int_s^T g(r, X_r^{t,x},Y_r^{t,x},Z_r^{t,x})\ola B(dr,X_r^{t,x})-\int_s^T Z_r^{t,x}dW_r, \quad s\in[t,T].\label{bdsde'}
\end{align}

Assume the following condition for $f$ and $g$.
\begin{description}
\item[{\bf (B2)} ] Let $f, g: [0,\infty)\times \R^d\times \R\times \R^d\to \R$ be two given functions satisfying for $t\in[0,T]$
\[|f(t,x,y_1, z_1)-f(t, x, y_2,z_2)|^2\le K(|y_1-y_2|^2+|z_1-z_2|^2);\]
\[|g(t,x,y_1, z_1)-g(t, x, y_2,z_2)|^2\le K|y_1-y_2|^2+\alpha_t(x)|z_1-z_2|^2,\]
where $\alpha_t(x)q(t,x,x)\le \alpha$ for some constant $\alpha\in(0,1).$
\end{description}

\begin{theorem}\label{thm1}Assume  {\bf (H)}, {\bf (A1)}, {\bf (B2)} and that $\phi$ is of class $C^2$. Let $\{u(t,x);0\le t\le T, x\in \R^d\}$ be a random field such that $u(t,x)$ is $\cF_{t,T}^B$-measurable for each $(t,x)$, $u\in C^{0,2}([0,T]\times\R^d;\R)$ a.s., and $u(t,x)$ satisfies (\ref{spde'}). Then $u(t,x)=Y_t^{t,x}$, where $(Y_s^{t,x},Z_s^{t,x})_{t\le s\le T}$ is the unique solution to (\ref{bdsde'}).
\end{theorem}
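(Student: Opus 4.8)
The plan is to show that the pair
$(\tilde Y_s,\tilde Z_s):=\big(u(s,X_s^{t,x}),\ \nabla u(s,X_s^{t,x})\sigma(X_s^{t,x})\big)$, $s\in[t,T]$, solves the BDSDE (\ref{bdsde'}), and then to invoke the uniqueness part of Theorem \ref{BDSDE} to identify it with $(Y^{t,x},Z^{t,x})$; evaluating at $s=t$ and using $X_t^{t,x}=x$ then yields $u(t,x)=Y_t^{t,x}$. Note first that (\ref{bdsde'}) is an instance of (\ref{bdsde1}) with $\xi=\phi(X_T^{t,x})$ and drivers $f(r,X_r^{t,x},\cdot,\cdot)$, $g(r,X_r^{t,x},\cdot,\cdot)$, so by {\bf (H)}, {\bf (A1)}, {\bf (B2)} (and the moment bounds for $X^{t,x}$ from Lemma \ref{exponential}, together with the $C^2$ regularity of $\phi$) Theorem \ref{BDSDE} applies and gives a unique solution in some $\mathcal M^{2,\beta}$.

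The core step is to apply a generalized It\^o--Kunita formula, in the spirit of Lemma \ref{itolemma} but for the composition of the spatial semimartingale $u$ with the forward diffusion $X^{t,x}$ (cf. \cite{kunita}), to the process $s\mapsto u(s,X_s^{t,x})$ on $[t,T]$. Reading off from (\ref{spde'}) the backward dynamics of $u$, namely $-d_s u(s,y)=[\mathscr L u(s,y)+f(s,y,u,\nabla u\,\sigma)]\,ds+g(s,y,u,\nabla u\,\sigma)\,\overleftarrow{B}(ds,y)$, and combining it with the It\^o expansion of $u$ along $X^{t,x}$ (which produces the terms $\tfrac12\sum a_{ij}\partial_{ij}u$, $\sum b_i\partial_i u$ and the martingale part $\nabla u\,\sigma\,dW$), the decisive cancellation is that the $-\mathscr L u$ coming from the time drift of $u$ is exactly annihilated by the sum of the second-order and first-order terms generated by the dynamics of $X^{t,x}$, i.e. by $+\mathscr L u$. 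Moreover the cross-variation between the $\overleftarrow{B}$-driven martingale part of $u$ and the Brownian motion $W$ vanishes, since $B$ and $W$ are independent. What survives, after using $u(T,\cdot)=\phi$, is precisely
\[
u(s,X_s^{t,x})=\phi(X_T^{t,x})+\int_s^T f(r,X_r^{t,x},\tilde Y_r,\tilde Z_r)\,dr+\int_s^T g(r,X_r^{t,x},\tilde Y_r,\tilde Z_r)\,\overleftarrow{B}(dr,X_r^{t,x})-\int_s^T \tilde Z_r\,dW_r,
\]
so that $(\tilde Y,\tilde Z)$ satisfies (\ref{bdsde'}).

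It then remains to verify that $(\tilde Y,\tilde Z)$ lies in the uniqueness class $\mathcal M^{2,\beta}$ (and that $\tilde Y\in S^2$). Continuity and $\cF_s$-adaptedness of $\tilde Y_s$ and $\tilde Z_s$ follow from $u\in C^{0,2}$ a.s.\ and the $\cF_{t,T}^B$-measurability of $u(t,\cdot)$ combined with the $\cF_s^W$-measurability of $X_s^{t,x}$; the integrability $\E\int_t^T \tilde q_s e^{\beta\int_s^T\tilde q_r dr}|\tilde Y_s|^2 ds+\E\int_t^T e^{\beta\int_s^T\tilde q_r dr}|\tilde Z_s|^2 ds<\infty$ follows from the growth of $u$ and $\nabla u$ along the diffusion path together with Lemma \ref{exponential}; if such growth is not assumed a priori, one localizes with $\tau_N=\inf\{s\ge t:|X_s^{t,x}|\ge N\}\wedge T$, establishes the identity of the two pairs on $[t,\tau_N]$ (where all coefficients are bounded), and lets $N\to\infty$. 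Uniqueness from Theorem \ref{BDSDE} then gives $\tilde Y_s=Y_s^{t,x}$, $\tilde Z_s=Z_s^{t,x}$, and in particular $u(t,x)=\tilde Y_t=Y_t^{t,x}$.

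I expect the main obstacle to be the It\^o--Kunita step: rigorously justifying the generalized It\^o formula for the composition of the backward-in-time random field $u$ with the forward diffusion $X^{t,x}$ in this doubly stochastic setting --- in particular making precise the cross-variation term and its vanishing (care is needed because $\overleftarrow{B}$ is a backward integral while $W$ is forward), and ensuring that the hypothesis $u\in C^{0,2}$ is genuinely sufficient (the drift of $u$ involves $\nabla^2 u$, so one likely has to exploit the independence of $B$ and $W$, or a preliminary mollification of $u$, to apply Kunita's formula legitimately). The secondary technical point, placing $(\tilde Y,\tilde Z)$ in $\mathcal M^{2,\beta}$, is comparatively routine and handled by the localization above.
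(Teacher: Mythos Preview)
Your approach is essentially the same as the paper's: you both show that $\big(u(s,X_s^{t,x}),\nabla u(s,X_s^{t,x})\sigma(X_s^{t,x})\big)$ solves the BDSDE (\ref{bdsde'}) and then appeal to uniqueness. The key difference is methodological, and it directly addresses what you flagged as the main obstacle. Rather than invoking an off-the-shelf It\^o--Kunita formula for the composition of a backward semimartingale field with a forward diffusion, the paper carries out the derivation by hand via a telescoping partition argument \`a la Pardoux--Peng: for a partition $t=t_0<\cdots<t_n=T$ one writes
\[
u(t_i,X_{t_i}^{t,x})-u(t_{i+1},X_{t_{i+1}}^{t,x})=\big[u(t_i,X_{t_i}^{t,x})-u(t_i,X_{t_{i+1}}^{t,x})\big]+\big[u(t_i,X_{t_{i+1}}^{t,x})-u(t_{i+1},X_{t_{i+1}}^{t,x})\big],
\]
applies the ordinary (spatial) It\^o formula to the first bracket with time frozen at $t_i$, and plugs the SPDE (\ref{spde'}) into the second bracket with space frozen at $X_{t_{i+1}}^{t,x}$; summing and passing to the mesh limit yields the BDSDE, with the $\mathscr L u$ cancellation and the absence of a $B$--$W$ cross term emerging automatically from the two-piece decomposition (the forward and backward increments live on disjoint time intervals in each summand). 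This sidesteps the need to justify a generalized It\^o--Kunita formula under only $C^{0,2}$ regularity. Your discussion of placing $(\tilde Y,\tilde Z)$ in the uniqueness class and the localization device is more careful than the paper, which simply asserts the conclusion after the mesh limit.
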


\begin{proof}  We shall borrow the idea of the proof for \cite[Theorem 3.1]{pardouxpeng}. To prove the result, it suffices to show that $(u(s, X_s^{t,x}),\sigma(x)^T\nabla u(s,X_s^{t,x}); t\le s\le T)$ solves BDSDE \eref{bdsde'}. Letting $t=t_0<t_1<t_2<\cdots<t_n=T$,  by It\^o's formula and equation \eref{spde'}, we have
\begin{align*}
&\sum_{i=0}^{n-1} [u(t_{i},X_{t_{i}}^{t,x})-u(t_{i+1},X_{t_{i+1}}^{t,x})]\\
=&\sum_{i=0}^{n-1} [u(t_{i},X_{t_{i}}^{t,x})-u(t_{i},X_{t_{i+1}}^{t,x})]+ \sum_{i=0}^{n-1} [u(t_{i},X_{t_{i+1}}^{t,x})-u(t_{i+1},X_{t_{i+1}}^{t,x})]\\
=& \sum_{i=0}^{n-1} \Bigg[-\int_{t_i}^{t_{i+1}} \mathscr L u(t_i, X_s^{t,x})ds-\int_{t_i}^{t_{i+1}} \sigma(x)^T\nabla u(t_i, X_s^{t,x}) dW_s\\
&\quad\quad+ \int_{t_i}^{t_{i+1}} \left[\mathscr L u(s,X_{t_{i+1}}^{t,x})+f(s,X_{t_{i+1}}^{t,x}, u(s,X_{t_{i+1}}^{t,x}), \sigma(x)^T\nabla u(s, X_{t_{i+1}}^{t,x}))\right]ds\\
&\quad\quad+ \int_{t_i}^{t_{i+1}} g(s,X_{t_{i+1}}^{t,x}, u(s,X_{t_{i+1}}^{t,x}), \sigma(X_{t_{i+1}}^{t,x})^T\nabla u(s, X_{t_{i+1}}^{t,x}))\ola B(ds,X_{t_{i+1}}^{t,x})\Bigg].
%
%=& \sum_{i=0}^{n-1} \Bigg[-\int_{t_i}^{t_{i+1}} (\nabla u\sigma)(t_{i+1},X_s^{t,x})dW_s-\int_{t_i}^{t_{i+1}} \mathscr Lu(t_{i+1}, X_s^{t,x})ds\\
%&\quad\quad+ \int_{t_i}^{t_{i+1}} \left[\mathscr L u(s,X_{t_i}^{t,x})+f(s,X_{t_i}^{t,x}, u(s,X_{t_i}^{t,x}), (\nabla u\sigma )(s, X_{t_i}^{t,x}))\right]ds\\
%&\quad\quad+ \int_{t_i}^{t_{i+1}} g(s,X_{t_i}^{t,x}, u(s,X_{t_i}^{t,x}), (\nabla u\sigma )(s, X_{t_i}^{t,x}))B(ds,X_{t_i}^{t,x})\Bigg].
\end{align*}
Let the mesh size go to zero and the result is concluded. \hfill
\end{proof}

To get the converse of the above theorem, we need more path regularity of $(Y,Z)$, for which we impose the following conditions. 
\begin{description}
\item{\bf (A2)} $ b\in C_{b}^3(\R^d; \R^d),\, \sigma\in C_{b}^3(\R^d; \R^{d\times d}),$ i.e. $b$ and $\sigma$ are bounded functions of class $C^3$ whose partial derivatives are also bounded.
\item{\bf (B3)} Besides condition {\bf (B2)}, we also assume
\begin{itemize}
\item[(i)]  For any $s\in [0,T], f(s,\cdot,\cdot,\cdot) \text { and } g(s,\cdot,\cdot,\cdot)$  are of class  $C^3$,
and all their partial derivatives are  bounded on  $[0,T]\times\R^d\times\R\times \R^{ d}$.

\item[(ii)] $g$ is uniformly bounded, $ |g_z(t,x,y,z)|^2 q(t,x,x) \le \alpha <1,$ and   $|g_y(t,x,y,z)|^2 q(t,x,x)<C<\infty$, for $(t,x,y,z)\in [0,T]\times\R^d\times\R\times \R^{d} $.

\item[(iii)] $\phi\in C_p^3(\R^d; \R),$  i.e. $\phi$ is of class $C^3$ whose partial derivatives are of polynomial growth.
\end{itemize}
\end{description}

Under the condition {\bf (A2)}, it is known (see e.g. \cite{Stroock}) that the random field $\{X_s^{0, x};0\le s\le T, x\in \R^d\}$ has a version of class $C^2$ in $x$, and of class $C^1$ in $(s,x)$. Moreover, for fixed $(t,x)$,
\begin{equation}
\sup_{t\le s\le T}(|X_s^{t,x}|+|\nabla X_s^{t,x}|+|\nabla^2 X_s^{t,x}|)\in\bigcap_{p\ge 1}L^p(\Omega).
\end{equation}

First we establish the relationship between $Y$ and $Z$. Denote by $D=(D^1, D^2,\cdots, D^d)$ the Malliavin derivative operator with respect to  the Brownian motion $W=(W^1, W^2, \cdots, W^d)$.

\begin{proposition}\label{p.6.2}
Assume {\bf (H), (A2)} and {\bf (B3)}. Then $Z_s^{t,x}=D_s Y_s^{t,x}$ a.s., and furthermore  the random process $\{Z_s^{t,x}, t\le s\le T\}$ has a continuous version given by
\[Z_s^{t,x}=\nabla Y_s^{t,x}(\nabla X_s^{t,x})^{-1} \sigma (X_s^{t,x}),\]
and in particular,
\[Z_t^{t,x}=\nabla Y_t^{t,x}\sigma(x),\]
where $\nabla X_s^{t,x}:=\left(\frac{\partial (X_s^{t,x})_i}{\partial x_j}\right)_{1\le i,j\le d}$ is the matrix of first order derivatives of  $X_s^{t,x}$ with respect to the initial value $x$ of $X^{t,x}_s$, and
$(\nabla Y_s^{t,x}, \nabla Z_s^{t,x})$ is the unique solution to BDSDE (\ref{e.6.6'}).
Moreover, for $ p\ge 1$,
\begin{equation}\label{e.6.9}
\E\left(\sup_{0\le s\le t} |Z_s^{t,x}|^p\right)<\infty.
\end{equation}
\end{proposition}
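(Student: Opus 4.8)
The plan is to follow the Malliavin-calculus route that is classical for BSDEs (Pardoux--Peng, El Karoui--Peng--Quenez) and was adapted to the BDSDE setting in \cite{pardouxpeng, bm01}, taking care of the spatial dependence of $B$. First I would set up the relevant differentiability. Under {\bf (A2)} the flow $X^{t,x}$ is twice continuously differentiable in $x$ with all moments of $\sup_{t\le s\le T}(|X_s^{t,x}|+|\nabla X_s^{t,x}|+|\nabla^2 X_s^{t,x}|)$ finite, and $\nabla X_s^{t,x}$ solves the usual variational SDE; $(\nabla X_s^{t,x})^{-1}$ exists and has all moments as well. Since $B$ is independent of $W$, differentiation in the Malliavin sense with respect to $W$ commutes with the backward It\^o integral against $\ola B(dr,\cdot)$ (one treats $B$ as frozen on the Wiener space, i.e. works conditionally on $\mathcal F_T^B$). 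Thus, formally differentiating BDSDE \eqref{bdsde'} in the direction $D_\theta$, one gets a linear BDSDE for $(D_\theta Y_s^{t,x}, D_\theta Z_s^{t,x})$ on $\theta\le s\le T$, with zero "initial" data for $\theta>s$ by adaptedness, and the process $(D_\theta Y_s^{t,x})_{s\ge\theta}$ coincides, up to the linear factors coming from the chain rule, with $\nabla Y_s^{t,x}$; comparing the two linear equations (the one for $\nabla Y$ and the one for $D_\theta Y$) and using the uniqueness part of Theorem \ref{BDSDE} applied to these linear BDSDEs identifies $D_\theta Y_s^{t,x} = \nabla Y_s^{t,x}(\nabla X_s^{t,x})^{-1}\sigma(X_\theta^{t,x})$ for $\theta\le s$. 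Evaluating at $\theta = s$ and invoking the general fact $Z_s = D_s Y_s$ for solutions of BSDEs of this type gives $Z_s^{t,x} = \nabla Y_s^{t,x}(\nabla X_s^{t,x})^{-1}\sigma(X_s^{t,x})$; at $\theta=s=t$ this reduces to $Z_t^{t,x}=\nabla Y_t^{t,x}\sigma(x)$ since $\nabla X_t^{t,x}=I$.

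To make this rigorous rather than formal, I would not differentiate the fixed point directly but instead run the Picard iteration from Theorem \ref{BDSDE}: set $(Y^{(0)},Z^{(0)})=(0,0)$ and $(Y^{(n)},Z^{(n)})=\Psi(Y^{(n-1)},Z^{(n-1)})$. One shows by induction that each $Y^{(n),t,x}_s\in \mathbb D^{1,2}$ (in the $W$-direction) with $Z^{(n),t,x}_s = D_s Y^{(n),t,x}_s$ and with the Malliavin derivatives satisfying explicit linear BDSDEs whose coefficients involve the bounded derivatives of $f,g$ from {\bf (B3)}(i)--(ii); the bound $|g_z|^2 q\le\alpha<1$ is exactly what is needed so that these linearized equations fall under the hypotheses of Proposition \ref{prop2.2}/Theorem \ref{BDSDE} and the iteration of their Malliavin derivatives converges in the appropriate weighted space. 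Passing to the limit (using closedness of the operator $D$ together with uniform $\mathbb D^{1,2}$-bounds) yields $Y_s^{t,x}\in\mathbb D^{1,2}$ and $Z_s^{t,x}=D_sY_s^{t,x}$. In parallel, one differentiates the coupled system $(X^{t,x},Y^{t,x},Z^{t,x})$ in the parameter $x$: under {\bf (A2)}, {\bf (B3)} the map $x\mapsto (Y^{t,x},Z^{t,x})$ is differentiable in $L^2$-sense (again by a Picard argument), $\nabla Y^{t,x}$ and $\nabla Z^{t,x}$ solve the linear BDSDE \eqref{e.6.6'} obtained by formal differentiation, and uniqueness for that BDSDE holds because the linearized $g$-coefficient still satisfies $|g_z|^2q\le\alpha<1$.

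The continuity of $s\mapsto Z_s^{t,x}$ then follows from the representation: $s\mapsto \nabla Y_s^{t,x}$ is continuous because $(\nabla Y, \nabla Z)$ is itself a solution of a BDSDE and hence has a continuous $Y$-component (the $S^2$-statement in Theorem \ref{BDSDE}), $s\mapsto (\nabla X_s^{t,x})^{-1}$ is continuous by {\bf (A2)}, and $x\mapsto\sigma(x)$ is continuous; the product of these continuous processes provides the continuous version of $Z^{t,x}$. For the moment bound \eqref{e.6.9}, I would apply H\"older's inequality to the product representation $Z_s^{t,x}=\nabla Y_s^{t,x}(\nabla X_s^{t,x})^{-1}\sigma(X_s^{t,x})$: $\sigma$ is bounded, all moments of $\sup_s|(\nabla X_s^{t,x})^{-1}|$ are finite by {\bf (A2)}, and the moments $\E\big(\sup_s|\nabla Y_s^{t,x}|^p\big)<\infty$ come from Theorem \ref{t.5.2} applied to the BDSDE \eqref{e.6.6'} solved by $\nabla Y$ (its terminal datum is $\nabla\phi(X_T^{t,x})\nabla X_T^{t,x}$, which has all moments by {\bf (B3)}(iii) and the polynomial-growth estimate on $\nabla\phi$, and its coefficients have the required Lipschitz/structure properties with the same $\alpha<1$).

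The main obstacle I anticipate is the bookkeeping for the Malliavin derivative in the presence of the backward integral $\int\ola B(dr,X_r^{t,x})$: one must justify that $D_\theta$ passes through this integral and produces a term of the form $\int g_z(\cdots) D_\theta Z_r\,\ola B(dr,X_r^{t,x})$ plus a term involving $D_\theta X_r$ hitting the spatial argument of $g$ and of $B$ itself, and that the resulting linear BDSDE is still covered by our existence/uniqueness theory — in particular that the quadratic-variation condition {\bf (H)} and the contraction constant $\alpha<1$ survive the linearization. This is exactly the place where working conditionally on $\mathcal F_T^B$ (so that the $\ola B$-integral is "deterministic" from the Wiener-space viewpoint) is essential, and where conditions {\bf (B3)}(i)--(ii) — boundedness of all derivatives of $f,g$ and $|g_z|^2q\le\alpha<1$, $|g_y|^2q<\infty$ — are used in full force.
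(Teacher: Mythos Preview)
Your approach is essentially the paper's: write the linear SDEs/BDSDEs satisfied by $\nabla X$ and $D_\theta X$ (respectively $\nabla Y$ and $D_\theta Y$), identify them via uniqueness, let $s\downarrow\theta$ to obtain $Z_\theta=D_\theta Y_\theta=\nabla Y_\theta(\nabla X_\theta)^{-1}\sigma(X_\theta)$, and then read off continuity and the moment bound \eqref{e.6.9} from the $S^p$-estimates (Theorem~\ref{t.5.2}) applied to the linear BDSDE \eqref{e.6.6'} for $\nabla Y$. One small slip: your intermediate identity should read $D_\theta Y_s^{t,x}=\nabla Y_s^{t,x}(\nabla X_\theta^{t,x})^{-1}\sigma(X_\theta^{t,x})$ (the factor $(\nabla X_\cdot)^{-1}\sigma(X_\cdot)$ is frozen at $\theta$, not at $s$, since it comes from $D_\theta X_r=\nabla X_r(\nabla X_\theta)^{-1}\sigma(X_\theta)$), though of course this agrees with what you wrote once $\theta=s$.
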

\begin{remark} In this proposition, $(\nabla Y_s^{t,x}, \nabla Z_s^{t,x})$ is just a notation for the solution to BDSDE (\ref{e.6.6'}). After the regularity of $Y_s^{t,x}$ is obtained in  Theorem \ref{t.6.2}, one shall see that $(\nabla Y_s^{t,x}, \nabla Z_s^{t,x})$ also means the matrix of the first order derivatives of $(Y_s^{t,x}, Z_s^{t,x})$.
\end{remark}
\begin{proof}
The proof is similar to the combination of the proofs of Proposition 2.2, Lemma 2.5 and Lemma 2.6 in \cite{pp}. Here we provide a sketch of the proof for the reader's convenience.

For a general function $\phi(x)=(\phi_1(x), \cdots, \phi_d(x))^T:\R^d\to \R^d,$ we denote
\[\phi'(x):=\left(\frac{\partial \phi_i(x)}{\partial x_j}\right)_{1\le i, j\le d}.\]
By the chain rule for vector-valued functions, $\nabla X_s^{t,x}$ is the unique solution to the following linear SDE,
\[\nabla X_s^{t,x}=I +\int_t^s b'(X_r^{t,x})\nabla X_r^{t,x}dr+\sum_{k=1}^d \int_t^s \sigma_k'(X_r^{t,x})\nabla X_r^{t,x}dW_r^k, \, s\in [t,T],\]
where $\sigma_k$ is the $k$-th column of the matrix $\sigma$. On the other hand, the Malliavin derivative $D_\theta X_s^{t,x}$ satisfies the following linear SDE, for $s\in [t, T],$
\[D_\theta X_s^{t,x}=\sigma(X_\theta^{t,x})+\int_\theta^s b'(X_r^{t,x}) D_\theta X_r^{t,x}dr+\sum_{k=1}^d \int_\theta^s \sigma_k'(X_r^{t,x})D_\theta X_r^{t,x}dW_r^k, \ \ \theta \in[t,s].\]
By the uniqueness of the solutions to linear SDEs, we have
\begin{equation}\label{e.6.10}
D_\theta X_s^{t,x}=\nabla X_s^{t,x} (\nabla X_\theta^{t,x})^{-1}\sigma (X_\theta^{t,x}), \, t\le \theta < s \le T.
\end{equation}

Let $(\nabla Y_s^{t,x},\nabla Z_s^{t,x})$ be the unique solution to  the linear BDSDE, for $ s\in [t,T],$
\begin{align}
\nabla Y_s^{t,x}=&\phi'(X_T^{t,x})\nabla X_T^{t,x}+\int_s^T \bigg[ f_x(r, X_r^{t,x}, Y_r^{t,x}, Z_r^{t,x}) \nabla X_r^{t,x}\notag\\
&+ f_y(r, X_r^{t,x}, Y_r^{t,x}, Z_r^{t,x}) \nabla Y_r^{t,x}+f_z(r, X_r^{t,x}, Y_r^{t,x}, Z_r^{t,x}) \nabla Z_r^{t,x}\bigg]dr\notag\\
&+\int_s^T \bigg[ g_x(r, X_r^{t,x}, Y_r^{t,x}, Z_r^{t,x}) \nabla X_r^{t,x}+ g_y(r, X_r^{t,x}, Y_r^{t,x}, Z_r^{t,x}) \nabla Y_r^{t,x}\notag\\
&+g_z(r, X_r^{t,x}, Y_r^{t,x}, Z_r^{t,x}) \nabla Z_r^{t,x}\bigg]B(dr,X_r^{t,x})-\int_s^T \nabla Z_r^{t,x}dW_r.\label{e.6.6'}
\end{align}

 On the other hand, as in the proof of Proposition 2.2 in \cite{pp}, one can show that under the conditions {\bf (H), (A2)} and  {\bf (B3)},  $X_s^{t,x}, Y_s^{t,x}$ and  $Z_s^{t,x}$ are in $\mathbb D^{1,2}$, and
$(D_\theta Y_s^{t,x}, D_\theta Z_s^{t,x})$ solves uniquely the linear BDSDE (\ref{e.6.6'}) as well, i.e. for $t\le \theta< s\le T,$
\begin{align*}
D_\theta Y_s^{t,x}=&\phi'(X_T^{t,x})D_\theta X_T^{t,x}+\int_s^T \bigg[ f_x(r, X_r^{t,x}, Y_r^{t,x}, Z_r^{t,x}) D_\theta X_r^{t,x}\\
&+ f_y(r, X_r^{t,x}, Y_r^{t,x}, Z_r^{t,x}) D_\theta Y_r^{t,x}+f_z(r, X_r^{t,x}, Y_r^{t,x}, Z_r^{t,x}) D_\theta Z_r^{t,x}\bigg]dr\\
&+\int_s^T \bigg[ g_x(r, X_r^{t,x}, Y_r^{t,x}, Z_r^{t,x}) D_\theta X_r^{t,x}+ g_y(r, X_r^{t,x}, Y_r^{t,x}, Z_r^{t,x}) D_\theta Y_r^{t,x}\\
&+g_z(r, X_r^{t,x}, Y_r^{t,x}, Z_r^{t,x}) D_\theta Z_r^{t,x}\bigg]B(dr,X_r^{t,x})-\int_s^T D_\theta Z_r^{t,x}dW_r.
\end{align*}

By \eref{e.6.10} and  the uniqueness of the solution to  linear BDSDEs, we have
\[D_\theta Y_s^{t,x} =\nabla Y_s^{t,x} (\nabla X_\theta^{t,x})^{-1} \sigma (X_\theta^{t,x}), \,t\le\theta< s\le T.\]
Letting $s$ decrease to $\theta$, we have
\[Z_\theta^{t,x}=\lim_{s\to\theta^+}D_\theta Y_s^{t,x}=\nabla Y_\theta^{t,x} (\nabla X_\theta^{t,x})^{-1} \sigma (X_\theta^{t,x}).\]
The continuity of $Z_s^{t,x}$ follows from the continuities of $\nabla Y_s^{t,x}, \nabla X_s^{t,x}$ and  $X_s^{t,x}$. Finally, we may obtain $L^p$ estimates for $\sup_{0\le s\le t}|\nabla Y_s^{t,x}|$ as we have done for $\sup_{0\le s \le t}|Y_s^{t,x}|$ in Theorem \ref{t.5.2}, and  \eref{e.6.9} is deduced. \hfill
\end{proof}

\begin{theorem}\label{t.6.2}
Assume  {\bf (H), (A2)} and {\bf (B3)}, and additionally assume that for some $\gamma>0$ and $K>0,$
\[ |q(t,x,x)-q(t,x,y)|\le K|x-y|^\gamma,\, \text{ for all } t\in [0,T].\]
Then the random field $\{Y_s^{t,x};0\le s\le t\le T, x\in \R^d\}$ has a version whose trajectories belong to $C^{0,0,2}([0,T]^2\times \R^d).$
\end{theorem}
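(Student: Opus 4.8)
The plan is to build up the spatial regularity of $Y_s^{t,x}$ to second order, derive H\"older estimates for the increments of $Y$ and its spatial derivatives in all three variables $(s,t,x)$, and then invoke Kolmogorov's continuity criterion to extract the desired version.

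First I would upgrade Proposition \ref{p.6.2}. There the first-order derivative $\nabla Y_s^{t,x}$ is characterized as the unique solution to the linear BDSDE \eqref{e.6.6'}. Differentiating \eqref{e.6.6'} once more in $x$ -- which is legitimate because under \textbf{(A2)} the flow $X_s^{t,x}$ is $C^2$ in $x$ with all moments of $\sup_s(|\nabla X_s^{t,x}|+|\nabla^2 X_s^{t,x}|)$ finite, under \textbf{(B3)(i)} the coefficients $f,g$ are $C^3$ with bounded derivatives, and under \textbf{(B3)(iii)} $\phi\in C^3_p$ -- yields a linear BDSDE for a candidate second derivative $\nabla^2 Y_s^{t,x}$ whose driver is a sum of products of the (bounded) derivatives of $f$ and $g$ with the processes $\nabla X,\nabla^2 X,\nabla Y,\nabla^2 Z$, plus inhomogeneous terms coming from $\phi''$, $f_{xx}$, $g_{xx}$, etc. The structural hypotheses needed to apply Theorem \ref{BDSDE} and Theorem \ref{t.5.2} to this equation hold: the coefficient multiplying the $z$-variable in the $\overleftarrow B$-integral is $g_z$, and by \textbf{(B3)(ii)} one has $|g_z|^2 q(t,x,x)\le\alpha<1$, which is exactly the contraction condition \textbf{(B1)}; the inhomogeneous terms lie in the relevant $M^{2p}$ and $Q^{2p}$ spaces by Lemma \ref{exponential}, the polynomial-growth assumption on $\phi'$, and the moment bounds on $X$. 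Hence $\nabla^2 Y_s^{t,x}$ exists, solves this linear BDSDE, and has finite $p$-moments for every $p$; differentiating the identity $Z_s^{t,x}=\nabla Y_s^{t,x}(\nabla X_s^{t,x})^{-1}\sigma(X_s^{t,x})$ from Proposition \ref{p.6.2} likewise gives $\nabla Z$ with all moments finite.

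Next I would prove the increment estimates. For the spatial increments, the difference $\nabla^2 Y_s^{t,x}-\nabla^2 Y_s^{t,x'}$ solves the linear BDSDE obtained by subtracting the equations at $x$ and $x'$; applying the generalized It\^o formula of Lemma \ref{itolemma} to the function $\phi_n$ of Lemma \ref{l.5.1} evaluated at this difference, followed by Gronwall's lemma and the Burkholder--Davis--Gundy inequality exactly as in the proof of Theorem \ref{t.5.2}, one obtains $\mathbb E\big[\sup_{t\le s\le T}|\nabla^2 Y_s^{t,x}-\nabla^2 Y_s^{t,x'}|^{2p}\big]\le C_p|x-x'|^{(\gamma\wedge 1)p}$ for every $p$, together with the analogous, easier, bounds for $Y$ and $\nabla Y$. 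The extra hypothesis $|q(t,x,x)-q(t,x,y)|\le K|x-y|^\gamma$ enters precisely here: the difference of the two backward It\^o integrals $\int_s^T(\cdot)\overleftarrow B(dr,X_r^{t,x})$ and $\int_s^T(\cdot)\overleftarrow B(dr,X_r^{t,x'})$ has quadratic variation governed by $\int_s^T[q(r,X_r^{t,x},X_r^{t,x})-2q(r,X_r^{t,x},X_r^{t,x'})+q(r,X_r^{t,x'},X_r^{t,x'})]dr$, which by the H\"older hypothesis is dominated by $2K\int_s^T|X_r^{t,x}-X_r^{t,x'}|^\gamma dr$, and this is then controlled via the standard flow estimate $\mathbb E\sup_r|X_r^{t,x}-X_r^{t,x'}|^p\le C|x-x'|^p$. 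The increments in the starting time $t$ are treated the same way, using in addition $\mathbb E\sup_r|X_r^{t,x}-X_r^{t',x}|^p\le C|t-t'|^{p/2}$ and the $r$-continuity of the various integrands; continuity in the running time $s$ is immediate from the path-continuity of solutions to the BDSDEs.

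Finally, with increment bounds in $(s,t,x)$ whose moments are of arbitrarily high order, Kolmogorov's continuity theorem produces a jointly continuous version of the triple $(s,t,x)\mapsto(Y_s^{t,x},\nabla Y_s^{t,x},\nabla^2 Y_s^{t,x})$; a routine argument (integrating the continuous candidate derivative and using uniqueness of limits) then identifies the continuous versions of $\nabla Y$ and $\nabla^2 Y$ with the genuine first and second spatial derivatives of the continuous version of $Y$, so that $Y\in C^{0,0,2}$. The main obstacle is the spatial-increment estimate for $\nabla^2 Y$: one must push the $\phi_n$-truncated It\^o/Gronwall computation of Theorem \ref{t.5.2} through while carrying a second difference quotient, and it is there that the H\"older regularity of $q$ is indispensable for taming the mismatch between the two martingale integrators $\overleftarrow B(dr,X_r^{t,x})$ and $\overleftarrow B(dr,X_r^{t,x'})$.
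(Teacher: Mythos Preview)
Your plan is correct and follows essentially the same route as the paper: derive $L^{2p}$ H\"older-type increment estimates for $Y$, its first, and its second spatial derivative in the variables $(s,t,x)$, using the H\"older hypothesis on $q$ to control the mismatch of the two backward integrators, and then apply Kolmogorov. The one methodological difference is that the paper does not define $\nabla^2 Y$ as the solution of a formally differentiated BDSDE and then identify it afterwards; instead it works directly with difference quotients $\Delta_h^i Y_s^{t,x}$, shows that $\Delta_h^i Y_s^{t,x}-\Delta_{h'}^i Y_s^{t,x'}$ satisfies a BDSDE of the same structure as \eqref{e.6.4}, and proves an estimate $\E|\Delta_h^iY_s^{t,x}-\Delta_{h'}^i Y_s^{t,x'}|^{2p}\le C(|h-h'|^{p\gamma}+|x-x'|^{p\gamma})$ which simultaneously yields existence and continuity of the derivative. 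This avoids the separate ``identification'' step you defer to the end. A second minor difference is that the paper applies It\^o's formula directly to $|Y_s^2-Y_s^1|^{2p}$ rather than to the truncated $\phi_n$, since the needed integrability is already in hand from Theorem \ref{t.5.2} and Proposition \ref{p.6.2}.
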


\begin{proof} The proof follows from the approach used in the proof of Theorem 2.1 in \cite{pardouxpeng}, from which we also borrow some notations.

First we show that for fixed $(t,x)\in[0,T]\times \R^d,\{Y_s^{t,x}; s\in [t,T]\}$ has a continuous version. For $t\le s_1\le s_2\le T$ and $p>1,$
\begin{align*}
\E(|Y_{s_2}^{t,x}-Y_{s_1}^{t,x}|^{2p})\le& C\Bigg[\E\left(\int_{s_1}^{s_2} f(r, X_r^{t,x},Y_r^{t,x},Z_r^{t,x})dr\right)^{2p}\\
&+\E \left(\int_{s_1}^{s_2} g(r, X_r^{t,x},Y_r^{t,x},Z_r^{t,x})\ola B(dr,X_r^{t,x})\right)^{2p}+\E\left(\int_{s_1}^{s_2} Z_r^{t,x}dW_r\right)^{2p}\Bigg]\\
\le & C\Bigg[\left(\int_{s_1}^{s_2} \left(\E |f(r, X_r^{t,x},Y_r^{t,x},Z_r^{t,x})|^{2p}\right)^{1/2p}dr\right)^{2p}\\
&+\left(\E \int_{s_1}^{s_2} g(r, X_r^{t,x},Y_r^{t,x},Z_r^{t,x})^2 q(r, X_r^{t,x},X_r^{t,x}) dr\right)^{p}+\left(\E\int_{s_1}^{s_2} |Z_r^{t,x}|^2 dr\right)^{p}\Bigg]\\
\le &C |s_1-s_2|^p,
\end{align*}
where the last inequality follows from the boundedness of $f$ and $g$, the integrability of $q(r, X_r^{t,x},X_r^{t,x})$ by Lemma \ref{exponential}, and the integrability of $ Z$ by Proposition \ref{p.6.2}. Then the continuity follows from the Kolmogorov's continuity theorem.

For $t_1, t_2\in [s,T],$ denoting $X_r^i=X_r^{t_i, x_i},Y_r^i=Y_r^{t_i, x_i}, Z_r^i=Z_r^{t_i, x_i}, i=1,2,$ we have
\begin{align}
Y_s^2-Y_s^1=& \int_0^1 \phi'(X_T^1+\lambda (X_0^2-X_0^1))d\lambda~ (X_T^2-X_T^1)\notag\\
&+ \int_s^T \int_0^1 f_x(r, X_r^1+\lambda(X_r^2-X_r^1), Y_r^1, Z_r^1)(X_r^2-X_r^1)d\lambda dr\notag\\
&+ \int_s^T \int_0^1 f_y(r, X_r^2, Y_r^1+\lambda(Y_r^2-Y_r^1),  Z_r^1)(Y_r^2-Y_r^1)d\lambda dr\notag\\
&+ \int_s^T \int_0^1 f_z(r, X_r^2, Y_r^2, Z_r^1+\lambda(Z_r^2-Z_r^1))(Z_r^2-Z_r^1)d\lambda dr\notag\\
&+ \int_s^T \int_0^1 g_x(r, X_r^1+\lambda(X_r^2-X_r^1), Y_r^1, Z_r^1)(X_r^2-X_r^1)d\lambda~ \ola B(dr, X_r^2)\notag\\
&+ \int_s^T \int_0^1 g_y(r, X_r^2, Y_r^1+\lambda(Y_r^2-Y_r^1),  Z_r^1)(Y_r^2-Y_r^1)d\lambda~\ola B(dr, X_r^2)\notag\\
&+ \int_s^T \int_0^1 g_z(r, X_r^2, Y_r^2, Z_r^1+\lambda(Z_r^2-Z_r^1))(Z_r^2-Z_r^1)d\lambda ~ \ola B(dr, X_r^2)\notag\\
&  +\int_s^T g(r,X_r^1, Y_r^1, Z_r^1)[\ola B(dr, X_r^2) -\ola B(dr, X_r^1)]\notag\\
&+\int_s^T Z_r^2-Z_r^1 dW_r. \label{e.6.4}
\end{align}
We adopt the following notations:

$q_{i,j}(r):=q(r, X_r^{i},X_r^{j}),\, i,j=1,2,$
\[A_r:=\text{summation of the inner integrals of the 2nd, 3rd and 4th terms on the right-hand side of \eref{e.6.4}}\]
and
\[B_r:=\text{summation of the inner integrals of the 5th, 6th and 7th terms on the right-hand side of \eref{e.6.4}}.\]
For $p>1$, applying It\^o's formula to $|Y_s^{2}-Y_s^{1}|^{2p}$ and taking expectation, we have
\begin{align}
&\E(|Y_s^2-Y_s^1|^{2p})+p(2p-1)\E\int_s^T |Y_r^2-Y_r^1|^{2p-2}|Z_r^2-Z_r^1|^2 dr\notag\\
=& \E \Bigg\{ \left| \int_0^1 \phi'(X_T^1+ \lambda (X_T^2-X_T^1))d\lambda (X_T^2-X_T^1)\right|^{2p}+2p \int_s^T |Y_r^2-Y_r^1|^{2p-1} A_r dr\notag\\
&\quad +p(2p-1)\int_s^T |Y_r^2-Y_r^1|^{2p-2}\bigg ( B_r^2 q_{2,2}(r)+ g^2(r, X_r^1, Y_r^1, Z_r^1)[q_{1,1}(r)+q_{2,2}(r)-2q_{1,2}(r)]\notag\\
&\qquad\qquad\qquad+2B_r g(r, X_r^1, Y_r^1,Z_r^1)[q_{2,2}(r)-q_{1,2}(r)]\bigg)dr\Bigg\}. \label{e.6.5}
\end{align}
By the boundedness of the derivatives of $f$ and the fact that $|Y_r^2-Y_r^1|^{2p-1}|Z_r^2-Z_r^1|\le \frac1a |Y_r^2-Y_r^1|^{2p} + a |Y_r^2-Y_r^1|^{2p-2} |Z_r^2-Z_r^1|^2 $ for $a>0$, the second term on the right-hand-side can be bounded by
\begin{align}
&C\int_s^T \left(|Y_r^2-Y_r^1|^{2p}+|Y_r^2-Y_r^1|^{2p-1}|X_r^2-X_r^1|
+|Y_r^2-Y_r^1|^{2p-1}|Z_r^2-Z_r^1|\right)dr\notag\\
\le & C(a) \int_s^T \left(|Y_r^2-Y_r^1|^{2p}+|X_r^2-X_r^1|^{2p}\right)dr
+a\int_s^T  |Y_r^2-Y_r^1|^{2p-2}|Z_r^2-Z_r^1|^2dr\label{e.6.6}
\end{align}
for some $a\in(0,1)$.

 By (ii) in  condition {\bf (B3)} and the fact that $g_x$ is bounded, we have
 $$B_r^2 q_{2,2}(r)\le C\left[q_{2,2}(r)|X_r^2-X_r^1|^2+|Y_r^2-Y_r^1|^2\right]+\alpha'|Z_r^2-Z_r^1|^2$$
 for some $\alpha'\in (\alpha,1)$. Furthermore, noting that $|q_{i_1, j_1}(r)-q_{i_2, j_2}(r)|\le C|X_r^2-X_r^1|^\gamma$ for $(i_1, j_1)\neq(i_2, j_2)$ and the fact that $g$ and its first derivatives are bounded, we may bound the third term on the right-hand side of \eref{e.6.5} by, for some $\alpha''\in (\alpha', 1)$,
\begin{align}
&p(2p-1)\int_s^T |Y_r^2-Y_r^1|^{2p-2} \bigg[C\big(1+q_{2,2}(r)\big)|X_r^2-X_r^1|^2+C|X_r^2-X_r^1|^\gamma \notag\\
&\qquad\qquad\qquad \qquad\qquad+C|X_r^2-X_r^1|^{2\gamma}+C|Y_r^2-Y_r^1|^2+\alpha''|Z_r^2-Z_r^1|^2\bigg] dr\notag\\
\le &C \int_s^T\bigg[|Y_r^2-Y_r^1|^{2p}+(1+q_{2,2}(r)\big)^p |X_r^2-X_r^1|^{2p} + C(|X_r^2-X_r^1|^{p\gamma}+|X_r^2-X_r^1|^{2p\gamma})\bigg] dr\notag\\
&\qquad \qquad  +p(2p-1)\alpha'' \int_s^T |Y_r^2- Y_r^1|^{2p-2}|Z_r^2-Z_r^1|^{2}dr. \label{e.6.7}
\end{align}
Noting that $q_{2,2}(r)$ has finite $p$-moments for any positive $p$, combining \eref{e.6.5}, \eref{e.6.6} and \eref{e.6.7} and choosing $a$ sufficiently small, we may find $\beta\in(0,1)$ and $C>0$ such that for some $p'>p$,
\begin{align*}
&\E(|Y_s^2-Y_s^1|^{2p})+p(2p-1)\beta \int_0^s |Y_r^2-Y_r^1|^{2p-2} |Z_r^2-Z_r^1|^2dr\\
\le & C\Bigg[ \left(\E\left[\sup_{0\le t\le T}|X_t^1|^q+\sup_{0\le t\le T}|X_t^2|^{q}\right]\right)^{1/2} \left(\E |X_T^2-X_T^1|^{4p}\right)^{1/2}\\
&+\int_s^T \left(\E(|X_r^2-X_r^1|^{2p'})\right)^{p/p'}+ \E(|X_r^2-X_r^1|^{p\gamma}+|X_r^2-X_r^1|^{2p\gamma})dr+\int_s^T \E(|Y_r^2-Y_r^1|^{2p})dr  \Bigg],
\end{align*}
where $q$ is determined by the polynomial growth of $\phi'(x)$ and $p$. By Gronwall's inequality and the following estimate, for $m>0$,
\[\E(\sup_{0\le s\le T}|X_s^2-X_s^1|^{m})\le C_m (1+|x_1|^{m}+|x_2|^{m})(|x_2-x_1|^{m}+|t_2-t_1|^{m/2}), \]
we deduce that for $|x_1|\vee|x_2|\le R, t_1,t_2\in [s,T]$ there exists a constant $C_{p, R, T}$ such that
\begin{equation}
\E(|Y_s^2-Y_s^1|^{2p}) \le C_{p,R,T}(|x_2-x_1|^{p\gamma}+|t_2-t_1|^{p\gamma/2}), \ \ \forall p \ge 1. \label{e.6.8}
\end{equation}
Therefore by Kolmogorov's continuity theorem, for any fixed $s\in[0,T)$, the process $\{Y_s^{t,x}, t\in [s,T], x\in \R^d\}$ has a continuous version. Actually, using a similar argument as in the  proof of Theorem \ref{t.5.2}, we may get for any $p\ge1$,
\begin{equation}\label{e.6.9'}
\E(\sup_{0\le s\le T}|Y_s^2-Y_s^1|^{2p}) +\E\left[\left(\int_0^T|Z_s^2-Z_s^1|^2ds\right)^{p}\right] \le C(|x_2-x_1|^{p\gamma}+|t_2-t_1|^{p\gamma/2}).
\end{equation}
Now we show the existence of a continuous version of the first derivative of $Y_s^{t,x}$ in $x$. Let $\{e_1,\cdots, e_d\}$ be an orthonormal basis of $\R^d$ and $h\neq 0$ be a constant. Define
\[\Delta_h^iY_s^{t,x}:= \frac1h(Y_s^{t, x+he_i}-Y_s^{t,x}),\]
and similarly define $\Delta_h^i X_s^{t,x}$ and $\Delta_h^i Z_s^{t,x}$. Setting $x_1=x, x_2=x+he_i$ and $t_1=t_2$, by \eref{e.6.9'}, we have
\begin{equation}\label{e.6.10'}
\E(\sup_{0\le s\le T} |\Delta_h^i Y_s^{t,x}|^{2p})+\E \left[\left(\int_0^T|\Delta_h^i Z_s^{t,x}|^2ds\right)^{p}\right]  <\infty.
\end{equation}

Finally, we consider $\Delta_h^iY_s^{t,x}-\Delta_{h'}^i Y_s^{t,x'}, |x|\vee|x'| \le R$, which satisfies an equation analogous to \eref{e.6.4}.
With the help of (\ref{e.6.9'}), (\ref{e.6.10'}), the condition {\bf(B3)}  and the following
two estimations
\begin{align}
&\E\left[\int_s^T f(r) B(dr, X_r^{t, x+h e_i})-\int_s^T f(r) B(dr, X_r^{t, x'+h' e_i})\right]^{2p}\notag\\
\le  &C \E\left[\int_s^T f^2(r) \left(q(r, X_r^{t, x+h e_i},X_r^{t, x+h e_i}) + q(r, X_r^{t, x'+h' e_i},X_r^{t, x'+h' e_i})-2q(r, X_r^{t, x+h e_i},X_r^{t, x'+h' e_i})\right)dr\right]^{p}\notag\\
\le &C\E\left[\int_s^T f^2(r) \left|X_r^{t, x+h e_i}-X_r^{t, x'+h' e_i}\right|^\gamma dr\right]^{p}\label{e.6.11'}
\end{align}
for  an $(\mathcal F_t)$-adapted function $f$ and
\[\E(\sup_{0\le s \le T}|\Delta_h^iX_s^{t,x}-\Delta_{h'}^i X_s^{t,x'}|^{m})\le C(1+|x|^{m}+|x'|^{m}) (|x-x'|^{m}+|h-h'|^{m}),\]
we may get the following estimate in much the same spirit as to get estimate \eref{e.6.8}:
\begin{equation*}
\E(|\Delta_h^iY_s^{t,x}-\Delta_{h'}^i Y_s^{t,x'}|^{2p})\le C(|h-h'|^{p\gamma}+|x-x'|^{p\gamma}),
\end{equation*}
where $C$ is a constant depending on $p, R$, $T$, the bounds for $g$ and the derivatives of $f$ and $g$. This implies that the derivative of $Y_s^{t,x}$ as well as a continuous version of it exists. Moreover,  similarly as in the proof of Theorem \ref{t.5.2}, we also have the following
\begin{equation*}
\E(\sup_{0\le s\le T}|\Delta_h^iY_s^{t,x}-\Delta_{h'}^i Y_s^{t,x'}|^{2p})+\E\left[\left(\int_0^T |\Delta_h^iZ_s^{t,x}-\Delta_{h'}^i Z_s^{t,x'}|^{2}ds\right)^p\right]\le C(|h-h'|^{ p\gamma}+|x-x'|^{p\gamma}),
\end{equation*}
which implies that the derivative of $Z_s^{t,x}$ with respect to $x$ exists and it is continuous in the mean-square sense. Finally, the existence of a continuous second derivative of $Y_s^{t,x}$ can be proven in a similar way. \hfill
\end{proof}

The following result provides a nonlinear Feynman-Kac formula for  SPDE \eref{spde'}.
\begin{theorem}\label{sz5} Assume the same conditions as in Theorem \ref{t.6.2}. Then $\{u(t,x):= Y_t^{t,x};0\le t\le T, x\in \R^d\}$ is the unique classical solution to SPDE \eref{spde'}.
\end{theorem}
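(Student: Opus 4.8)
The plan is to split the proof into uniqueness --- which is immediate --- and existence, which carries the work. \emph{Uniqueness.} If $v$ is any classical solution of \eqref{spde'}, i.e. $v\in C^{0,2}([0,T]\times\R^d)$ a.s., $v(t,x)$ is $\cF_{t,T}^B$-measurable, and $v$ satisfies the integral equation \eqref{spde'}, then Theorem \ref{thm1} applies and yields $v(t,x)=Y_t^{t,x}$ for every $(t,x)$, a.s. Hence any two classical solutions coincide, and it remains to produce one.

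\emph{Existence: preliminaries.} By Theorem \ref{t.6.2} the field $Y_\cdot^{\cdot,\cdot}$ has a version with trajectories in $C^{0,0,2}([0,T]^2\times\R^d)$; in particular $u(t,x):=Y_t^{t,x}$ has a version in $C^{0,2}$ a.s. First one checks that $u(t,x)$ is $\cF_{t,T}^B$-measurable, which follows from the representation of $Y_t^{t,x}$ obtained in Proposition \ref{prop2.2} together with the fact that $\phi(X_T^{t,x})$ and the coefficients evaluated along $X^{t,x}$ depend on $W$ only through increments on $[t,T]$, which are independent of $\cF_t^W$. Next, the flow property $X_r^{s,X_s^{t,x}}=X_r^{t,x}$ of \eqref{sde} combined with the uniqueness part of Theorem \ref{BDSDE} gives the cocycle relation $Y_r^{t,x}=Y_r^{s,X_s^{t,x}}=u(r,X_r^{t,x})$ for $t\le s\le r\le T$, a.s. Differentiating this in $x$ (both $X^{t,x}$ and $u$ are $C^1$ in the spatial variable) and comparing with Proposition \ref{p.6.2}, namely $Z_s^{t,x}=\nabla Y_s^{t,x}(\nabla X_s^{t,x})^{-1}\sigma(X_s^{t,x})$, one obtains $Z_s^{t,x}=(\nabla u\,\sigma)(s,X_s^{t,x})$ for $t\le s\le T$, a.s.

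\emph{Existence: identifying the equation.} Substituting the two relations into BDSDE \eqref{bdsde'} yields, for every $(t,x)$ and every $s\in[t,T]$,
\begin{equation}\label{eq.flowid}
\begin{aligned}
u(s,X_s^{t,x})=u(t,x)&-\int_t^s f(r,X_r^{t,x},u(r,X_r^{t,x}),(\nabla u\,\sigma)(r,X_r^{t,x}))\,dr\\
&-\int_t^s g(r,X_r^{t,x},u(r,X_r^{t,x}),(\nabla u\,\sigma)(r,X_r^{t,x}))\,\ola B(dr,X_r^{t,x})\\
&+\int_t^s (\nabla u\,\sigma)(r,X_r^{t,x})\,dW_r.
\end{aligned}
\end{equation}
I would then recompute the left-hand side independently by a telescoping argument over a partition $t=t_0<\cdots<t_n=s$: write $u(t_{i+1},X_{t_{i+1}}^{t,x})-u(t_i,X_{t_i}^{t,x})$ as the sum of the spatial increment $u(t_i,X_{t_{i+1}}^{t,x})-u(t_i,X_{t_i}^{t,x})$ and the temporal increment $u(t_{i+1},X_{t_{i+1}}^{t,x})-u(t_i,X_{t_{i+1}}^{t,x})$. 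Since $u(t_i,\cdot)\in C^2$ is independent of the increments of $W$ on $[t_i,t_{i+1}]$, the classical It\^o formula turns the sum of the spatial increments into $\int_t^s\mathscr L u(r,X_r^{t,x})\,dr+\int_t^s(\nabla u\,\sigma)(r,X_r^{t,x})\,dW_r+o(1)$ as the mesh tends to $0$, using the joint continuity of $u,\nabla u,\nabla^2u$. Comparing with \eqref{eq.flowid}, the $dW$-integrals cancel, so the sum of the temporal increments converges to $-\int_t^s[\mathscr L u(r,X_r^{t,x})+f(r,X_r^{t,x},u,\nabla u\,\sigma)]\,dr-\int_t^s g(r,X_r^{t,x},u,\nabla u\,\sigma)\,\ola B(dr,X_r^{t,x})$. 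This identifies $r\mapsto u(r,y)$, for each fixed $y$, as a continuous backward semimartingale driven by $\ola B(\cdot,y)$ with no forward $W$-martingale part; feeding its characteristics $(A,G)$ back through the It\^o--Kunita formula for $u(\cdot,X_\cdot^{t,x})$ and matching with \eqref{eq.flowid} forces $A(r,\cdot)=\mathscr L u(r,\cdot)+f(r,\cdot,u(r,\cdot),(\nabla u\,\sigma)(r,\cdot))$ and $G(r,\cdot)=g(r,\cdot,u(r,\cdot),(\nabla u\,\sigma)(r,\cdot))$ on the support of $X_r^{t,x}$, hence for all $y$ by continuity. Integrating from $t$ to $T$ gives \eqref{spde'}; the terminal condition $u(T,x)=Y_T^{T,x}=\phi(X_T^{T,x})=\phi(x)$ holds automatically.

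\emph{Main obstacle.} The delicate step is this last ``inversion'' of the It\^o--Kunita formula --- passing from the identity \eqref{eq.flowid} along the diffusion back to the pointwise-in-$x$ SPDE. Because $X^{t,x}$ never rests at $x$, establishing the backward-semimartingale structure of $r\mapsto u(r,x)$ in time and pinning down its characteristics must be done carefully, using the $C^{0,0,2}$-regularity from Theorem \ref{t.6.2}, the H\"older continuity of $q(r,\cdot,\cdot)$ near the diagonal, and the continuity of $f$ and $g$, much as in \cite{pardouxpeng} and \cite{pp}.
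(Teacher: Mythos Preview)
Your uniqueness argument and the preliminary identifications $Y_s^{t,x}=u(s,X_s^{t,x})$ and $Z_s^{t,x}=(\nabla u\,\sigma)(s,X_s^{t,x})$ are fine and match the paper. The gap is precisely where you flag it: the ``inversion'' step. From \eqref{eq.flowid} and the telescoping you obtain the limit of $\sum_i[u(t_{i+1},X_{t_{i+1}}^{t,x})-u(t_i,X_{t_{i+1}}^{t,x})]$ \emph{along the random path} $X^{t,x}$, but you then assert that $r\mapsto u(r,y)$ has, for each fixed $y$, a backward-semimartingale decomposition with characteristics $(A,G)$ to be identified. That is circular: the existence of such a decomposition is exactly what you are trying to prove, and knowing a limit along a single diffusion path does not by itself yield the pointwise-in-$y$ structure. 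Your reference to \cite{pardouxpeng} does not help here, because that paper does \emph{not} argue by inversion of the It\^o--Kunita formula.

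The paper's proof sidesteps the inversion entirely by a small but decisive change of viewpoint: instead of fixing one diffusion $X^{t,x}$ and telescoping along it, one \emph{restarts the diffusion from $x$ at each partition point}. For $t=t_0<\cdots<t_n=T$, using $X_{t_i}^{t_i,x}=x$ and $u(t_{i+1},X_{t_{i+1}}^{t_i,x})=Y_{t_{i+1}}^{t_i,x}$, one writes
\[
u(t_{i+1},x)-u(t_i,x)=\bigl[u(t_{i+1},X_{t_i}^{t_i,x})-u(t_{i+1},X_{t_{i+1}}^{t_i,x})\bigr]+\bigl[Y_{t_{i+1}}^{t_i,x}-Y_{t_i}^{t_i,x}\bigr],
\]
applies It\^o's formula to the first bracket and the BDSDE \eqref{bdsde'} to the second. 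Summing over $i$ expresses $\phi(x)-u(t,x)$ as a sum of integrals on $[t_i,t_{i+1}]$ with integrands evaluated at $X_s^{t_i,x}$, $Y_s^{t_i,x}$, $Z_s^{t_i,x}$. As the mesh shrinks, $X_s^{t_i,x}\to x$ and, by the $C^{0,0,2}$ regularity of Theorem \ref{t.6.2} together with Proposition \ref{p.6.2}, $Y_s^{t_i,x}\to u(s,x)$ and $Z_s^{t_i,x}\to(\nabla u\,\sigma)(s,x)$; the $dW$-integrals cancel and the remaining Riemann sums converge to the right-hand side of \eqref{spde'}. The identity is thus obtained directly at the fixed spatial point $x$, with no need to reconstruct the time-semimartingale structure of $u(\cdot,y)$ from information along a moving path.
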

%\begin{remark}
%We call $u(t,x)=Y_t^{t,x}$ the nonlinear Feynman-Kac formula for the equation \eref{spde'}.
%\end{remark}
\begin{proof}
Uniqueness follows from Theorem \ref{thm1}, and we show that $u(t,x)=Y_t^{t,x}$ is a solution to \eref{spde'}. Noting that $u(t+h, X_{t+h}^{t,x})=Y_{t+h}^{t+h, X_{t+h}^{t,x}}=Y_{t+h}^{t,x},$ applying It\^o's formula and using \eref{bdsde'}, we have that for $h>0$,
\begin{align}
u(t+h,x)-u(t,x)=&u(t+h, X_t^{t,x})-u(t+h, X_{t+h}^{t,x})+u(t+h, X_{t+h}^{t,x})-u(t, x)\notag\\
=&-\int_t^{t+h} \mathscr L u(t+h, X_s^{t,x})ds - \int_t^{t+h} \nabla u  (t+h, X_s^{t,x}) \sigma(X_s^{t,x})dW_s \notag\\
&-\int_t^{t+h} f(s, X_s^{t,x}, Y_s^{t,x}, Z_s^{t,x}) ds-\int_t^{t+h} g(s, X_s^{t,x}, Y_s^{t,x}, Z_s^{t,x})\ola B(ds, X_s^{t,x})\notag\\
&+\int_t^{t+h} Z_s^{t,x} dW_s.\label{e.6.11}
\end{align}
Let $\pi_n$ be a partition $0=t_0<t_1<\cdots<t_n=t$. By \eref{e.6.11}, we have
\begin{align*}
\phi(x)-u(t,x)=& -\sum_{i=0}^{n-1} \int_{t_i}^{t_{i+1}} [\mathscr Lu(t_i, X_s^{t,x})+f(s,X_s^{t,x},Y_s^{t,x}, Z_s^{t,x})]ds\\
&- \sum_{i=0}^{n-1} \int_{t_i}^{t_{i+1}} g(s,X_s^{t,x},Y_s^{t,x}, Z_s^{t,x})\ola B(ds,X_s^{t,x})\\
&+ \sum_{i=0}^{n-1} \int_{t_i}^{t_{i+1}} [Z_s^{t,x}-\nabla u (t_i,X_s^{t,x})\sigma(X_s^{t,x})]dW_s.
\end{align*}
If we let mesh sizes of the partitions $\pi_n$ go to zero, by Theorem \ref{t.6.2} and Proposition \ref{p.6.2}, we have
\begin{align*}
u(t,x)=&\phi(x)+\int_t^T [\mathscr L u(s,x)+f(s,x,u(s,x),\nabla u(s,x)\sigma(x))]ds\\
&+ \int_t^T g(s,x,u(s,x), \nabla u(s,x)\sigma(x))\ola B(ds,x),
\end{align*}
and the proof concludes. \hfill
\end{proof}

\begin{remark}
Consider the linear BDSDE
\[Y_s^{t,x}=\phi(X_T^{t,x})+\int_s^T (h_r+\alpha_r Y_r^{t,x})dr+\int_s^T \beta_r Y_r^{t,x} \ola B(dr, X_r^{t,x})-\int_s^T Z_r^{t,x}dW_r, t\le s\le T,\]
where $h_r=h(r,X_r^{t,x}), \alpha_r=\alpha(r,X_r^{t,x})$ and  $\beta_r=\beta(r,X_r^{t,x})$. The solution is given by
\[Y_s^{t,x}=\phi(X_T^{t,x})\Gamma_s^T +\int_s^T \Gamma_s^r h_r dr-\int_s^T \Gamma_s^rZ_r^{t,x} dW_r,\]
where
\[\Gamma_s^r =\exp\left(\int_s^r \alpha_\tau d\tau +\int_s^r \beta_\tau \ola B(d\tau, X_\tau^{t,x})-\frac12 \int_s^r \beta_\tau^2 q(\tau, X_\tau^{t,x}, X_\tau^{t,x})d\tau\right).\]
For the corresponding SPDE,  noting that  $Y_t^{t,x}$ is $\mathcal F_{t,T}^B$-measurable, we have
\[u(t,x)=Y_t^{t,x}=\E\left[\left. \phi(X_T^{t,x})\Gamma_t^T +\int_t^T \Gamma_t^r h_r dr\right|\mathcal F_{t,T}^B\right].\]
When $h_r\equiv\alpha_r\equiv 0, \beta_r\equiv 1$ and $X_r^{t,x}=x+W_r-W_t$, the Feynman-Kac formula is given by
\begin{align*}
u(t,x)=&\E_W\Bigg[\phi(x+W_T-W_t) \exp\Bigg(\int_t^T \ola B(dr, x+W_r-W_t)\\
&\qquad \qquad -\frac12 \int_t^T q(r, x+W_r-W_t, x+W_r-W_t)dr\Bigg)\Bigg],
\end{align*}
and it coincides with the Feynman-Kac formula provided in \cite[Theorem 3.1]{hns}.
%
%{\blue (to be deleted later) To get the coincidence, compare our $u(T-t,x)$ with $u(t,x)$ in \cite{hns}: use $F(s,x)=B(T-s,x)$, and use change of variable. }
%
%{\color{blue} This result for linear SPDE is consistent with the end of \cite{pardouxpeng}, but NOT with \cite[Proposition 2.1]{bm01}. I don't think \cite[Proposition 2.1]{bm01} is correct. }
\end{remark}

\setcounter{equation}{0}

\section{Random periodic solutions to semilinear SPDEs}

In this section, we will construct random periodic solutions to semilinear SPDEs via the corresponding {\it infinite horizon} BDSDEs.  For this purpose, we first consider the solvability of the  BDSDE on $[0,T]$ with $T$ increasing to infinity,
\begin{align}\label{sz1}
\begin{cases}
&Y_s^{t,x}=Y_T^{t,x}+\int_s^T f(r, X_r^{t,x},Y_r^{t,x},Z_r^{t,x})dr\\
&\qquad\ \ \ \ -\int_s^T g(r, X_r^{t,x},Y_r^{t,x},Z_r^{t,x})\overleftarrow B(dr,X_r^{t,x})-\int_s^T Z_r^{t,x}d W_r, \quad s\in[t,T], \\
&\lim\limits_{T\to\infty}e^{-K'T}Y_T^{t,x}=0,
\end{cases}
\end{align}
 for some  positive constant $K'<\infty$.  This equation is equivalent to the following {\em infinite horizon} BDSDE,
\begin{eqnarray}\label{sz2}
e^{-K's}Y_{s}^{t,x}&=&\int_{s}^{\infty}e^{-K'r}f(r,X_{r}^{t,x},Y_{r}^{t,x},Z_{r}^{t,x})dr+\int_{s}^{\infty}K'e^{-K'r}Y_{r}^{t,x}dr\\
&&-\int_{s}^{\infty}e^{-K'r}
g(r,X_{r}^{t,x},Y_{r}^{t,x},Z_{r}^{t,x})\overleftarrow B(dr,X_r^{t,x})-\int_{s}^{\infty}e^{-K'r}Z_{r}^{t,x}dW_r.\nonumber
\end{eqnarray}

To study the equation on $[0,\infty)$, we introduce the  weighted spaces, for $p\geq2$ and $q>0$,
\begin{eqnarray*}
S^{p,-q}([0,\infty);\mathbb R)
&=&\Big\{\phi: \Omega\times [0, \infty)\to \R, \text{continuous,} \phi(t) \text{ is $\mathcal F_t$-measurable,  and }\\
&&\qquad\qquad\qquad\qquad\qquad\qquad 
   \E\big[\sup_{0\le t<\infty}e^{-qt}|\phi(t)|^p\big]<\infty\Big\}\,;\\
M^{p,-q}([0,\infty);\mathbb R^{  d})
&=&\Big\{\phi: \Omega\times[0,\infty)\to \R^d, \phi(t) \text{ is $\mathcal F_t$-measurable and }\E\int_0^\infty e^{-qt}|\phi(t)|^pdt<\infty\Big\}\,.
      \end{eqnarray*}
Assume the following conditions
\begin{description}
%\item{\bf (H)'} The function $q(s,x,y)$ given in \eqref{e.1.1} satisfies
%\begin{equation*}
%\sup_{s\in{\color{red}[0,\infty)}}|q(s, x,y)|\le K (1+|x|^\beta+|y|^\beta),
%\end{equation*}
% for some $ 0<\beta<2 $ and $0<K<\infty$.
\item{\bf (H)'} ~ The function $q(s,x,y)$ is uniformly bounded, i.e. there exists $M<\infty$ such that
 \begin{equation*}\sup_{s\in\R_+,\, (x,y)\in \R^{2d}}|q(s,x,y)|\le M, ~~a.s.
\end{equation*}
\end{description}

\begin{description}
\item{\bf (B2)'}~ Let $f, g: [0,\infty)\times \R^d\times \R\times \R^{d}\to \R$ be two functions such that $f(\cdot, 0,0,0), g(\cdot, 0,0,0)\in M^{2,-K'}([0,\infty);\R)$ and
\[|f(t,x_1,y_1, z_1)-f(t, x_2, y_2,z_2)|^2\le K(|x_1-x_2|^2+|y_1-y_2|^2+|z_1-z_2|^2);\]
\[|g(t,x_1,y_1, z_1)-g(t, x_2, y_2,z_2)|^2\le K(|x_1-x_2|^2+|y_1-y_2|^2)+\alpha_t(x)|z_1-z_2|^2,\]
where $\alpha_t(x)q(t,x,x)\le \alpha$ for some constant $\alpha\in(0,1).$
\item{\bf (M)}\ There exists a positive
constant $\mu$ such that $2\mu-K'-{K\over{1-\alpha}}-KM>0$ and
\begin{eqnarray*}
(y_1-y_2)\big(f(t,x,y_1,z)-f(t,x,y_2,z)\big)\leq-\mu|y_1-y_2|^2
\end{eqnarray*}
with $\alpha$ and $K$ taken from condition {\bf (B2)'} and $M$ from {\bf (H)'}.
\end{description}

We will need the following estimation for $X$ (see e.g. \cite{KaratzasShreve}) in the proof the Theorem \ref{qi071}.
\begin{lemma}\label{lemma6.1} Assume  {\bf (A1)}, for $p\geq1$ and $K'>0$, we have
\begin{eqnarray*}
E\left[\int_{t}^{s}e^{-K'r}|X_{r}^{t,x}|^{2p}dr\right]\leq e^{-K't}|x|^{2p}+C E\left[\int_{t}^{s}e^{-K'r}(|b(0)|^{2p}+|\sigma(0)|^{2p})dr\right]<\infty,
\end{eqnarray*}
where $C$ is a constant only depending on given parameters.
\end{lemma}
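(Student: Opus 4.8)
The plan is to run an It\^o\,+\,dissipation argument: apply the $d$-dimensional It\^o formula to $r\mapsto e^{-K'r}|X_r^{t,x}|^{2p}$ and let the negative contribution coming from the weight absorb the (at most) linear-in-$|y|^{2p}$ growth of the generator. Since $y\mapsto |y|^{2p}$ is of class $C^2$ on $\R^d$ for every $p\ge 1$, writing $\mathscr L$ for the generator of $X$ we get, for $t\le r\le s$,
\[
d\big(e^{-K'r}|X_r^{t,x}|^{2p}\big)=e^{-K'r}\Big(-K'\,|X_r^{t,x}|^{2p}+\big(\mathscr L|\cdot|^{2p}\big)(X_r^{t,x})\Big)\,dr+e^{-K'r}\,2p\,|X_r^{t,x}|^{2p-2}\,(X_r^{t,x})^{T}\sigma(X_r^{t,x})\,dW_r .
\]
By {\bf (A1)} the coefficients $b,\sigma$ are bounded and Lipschitz, so the classical SDE moment bound (e.g. \cite{KaratzasShreve}) gives $\sup_{t\le r\le s}\E|X_r^{t,x}|^{2p}<\infty$; hence the stochastic integral above is a true martingale and drops out upon taking expectations.

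Next I would estimate the generator term. A direct computation gives $(\mathscr L|\cdot|^{2p})(y)\le 2p\,|y|^{2p-1}|b(y)|+C_p\,\|\sigma(y)\|^{2}\,|y|^{2p-2}$, and since $b,\sigma$ are bounded, two applications of Young's inequality (with conjugate exponents $\tfrac{2p}{2p-1},2p$ for the first term and $\tfrac{p}{p-1},p$ for the second) yield
\[
\big(\mathscr L|\cdot|^{2p}\big)(X_r^{t,x})\le \tfrac{K'}{2}\,|X_r^{t,x}|^{2p}+C\big(|b(0)|^{2p}+|\sigma(0)|^{2p}\big),
\]
for a constant $C$ depending only on $p$, $K'$ and the (Lipschitz and sup) bounds of $b,\sigma$. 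Taking expectations in the It\^o identity, integrating over $[t,s]$, discarding the nonnegative term $\E[e^{-K's}|X_s^{t,x}|^{2p}]$, and moving $\tfrac{K'}{2}\E\int_t^s e^{-K'r}|X_r^{t,x}|^{2p}\,dr$ to the left yields
\[
\tfrac{K'}{2}\,\E\int_t^s e^{-K'r}|X_r^{t,x}|^{2p}\,dr\le e^{-K't}|x|^{2p}+C\,\E\int_t^s e^{-K'r}\big(|b(0)|^{2p}+|\sigma(0)|^{2p}\big)\,dr ,
\]
which is the asserted estimate up to the immaterial constant $2/K'$; the right-hand side is finite because $\int_t^s e^{-K'r}\,dr\le e^{-K't}/K'<\infty$, even for $s=\infty$.

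The only point requiring care is the legitimacy of the expectation manipulations: that the Brownian integral is a genuine martingale, i.e. that localizing and passing to the limit leaves no residual term. This is guaranteed by the a priori estimate $\sup_{t\le r\le s}\E|X_r^{t,x}|^{2p}<\infty$, which under {\bf (A1)} is exactly the standard SDE moment inequality; thus the lemma is essentially a textbook fact, the only wrinkle being the exponential weight, handled by differentiating the product $e^{-K'r}|X_r^{t,x}|^{2p}$. Alternatively, and even more elementarily, one may simply integrate the classical bound $\E|X_r^{t,x}|^{2p}\le C_p\big(|x|^{2p}+(r-t)^{2p}+(r-t)^{p}\big)$ — valid since $b,\sigma$ are bounded — against $e^{-K'r}$, the exponential factor ensuring absolute convergence for every $K'>0$.
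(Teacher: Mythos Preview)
Your argument is correct and is precisely the standard textbook computation. The paper itself does not give a proof of this lemma: it simply records it as a known moment estimate and refers the reader to \cite{KaratzasShreve}. What you wrote --- apply It\^o's formula to $e^{-K'r}|X_r^{t,x}|^{2p}$, use the boundedness of $b,\sigma$ together with Young's inequality to make the generator term $\le \tfrac{K'}{2}|y|^{2p}+C$, and take expectations --- is exactly the derivation behind that reference; your alternative route (integrate the pointwise bound $\E|X_r^{t,x}|^{2p}\le C_p(|x|^{2p}+(r-t)^{2p}+(r-t)^p)$ against $e^{-K'r}$) is an even quicker way to the same conclusion.

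One cosmetic remark: the bound you actually obtain after Young's inequality has $\|b\|_\infty^{2p}+\|\sigma\|_\infty^{2p}$ on the right, not $|b(0)|^{2p}+|\sigma(0)|^{2p}$. Under {\bf (A1)} the sup-norms are themselves fixed constants, so this is harmless for the application (what matters in Theorem~\ref{qi071} is only that the right-hand side, evaluated on $[n,m]$, tends to $0$ as $n\to\infty$), but as literally stated the inequality would fail if $b(0)=\sigma(0)=0$. This is a quirk of how the lemma is phrased in the paper rather than a defect in your argument; you are right to flag the ``immaterial constant'' and move on.
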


The following theorem guarantees the existence and uniqueness of the solution to the infinite horizon BDSDE under suitable conditions.
\begin{theorem}\label{qi071} Assume  {\bf (H)', (A1), (B2)'} and {\bf (M)}, then
BDSDE (\ref{sz1}) has a unique solution $(Y,Z)\in S^{2,-K'}\bigcap
M^{2,-K'}([0,\infty);\mathbb R)
\times  M^{2,-K'}([0,\infty);\mathbb R^{d})$.
\end{theorem}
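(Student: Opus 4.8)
The plan is to obtain $(Y,Z)$ as the limit of finite-horizon solutions and to let the strict dissipativity in \textbf{(M)} close all the a priori estimates; throughout, $(t,x)$ is fixed and $X_r:=X^{t,x}_r$. For each $n\ge t$ I would first apply Theorem \ref{BDSDE} on $[t,n]$ with zero terminal value to obtain the unique pair $(Y^n,Z^n)$ solving
\[Y^n_s=\int_s^n f(r,X_r,Y^n_r,Z^n_r)\,dr-\int_s^n g(r,X_r,Y^n_r,Z^n_r)\,\ola B(dr,X_r)-\int_s^n Z^n_r\,dW_r,\quad s\in[t,n].\]
Its hypotheses \textbf{(H)}, \textbf{(A1)}, \textbf{(B1)} follow from \textbf{(H)'}, \textbf{(A1)}, \textbf{(B2)'} (on $[t,n]$ the exponential weights are bounded above and below, $q$ is bounded, and the $x$-dependence of $f,g$ is absorbed into the data terms using the Lipschitz bound in \textbf{(B2)'} and Lemma \ref{lemma6.1}); one replaces $g$ by $-g$ to match the sign in \eqref{sz1}, and $\xi=0$ lies in every $L^p$.

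The core estimate uses the weight $e^{-K'r}$. I would apply the It\^o formula \eqref{e.ito} with $\varphi(y)=y^2$, together with the product rule of Lemma \ref{product} for the factor $e^{-K'r}$, to $e^{-K'r}|Y^m_r|^2$ on a subinterval $[n,m]$ with $t\le n\le m$, and take expectations. In the resulting identity the drift $2Y^m_r f(r,X_r,Y^m_r,Z^m_r)$ is split around $f(r,X_r,0,Z^m_r)$ and $f(r,X_r,0,0)$: the monotone part contributes at most $-2\mu|Y^m_r|^2$ by \textbf{(M)}, the $z$-Lipschitz part at most $\tfrac K\varepsilon|Y^m_r|^2+\varepsilon|Z^m_r|^2$, and the remainder is dominated by data plus $|X_r|^2$; the term $|g(r,X_r,Y^m_r,Z^m_r)|^2q(r,X_r,X_r)$ is bounded, using \textbf{(B2)'} and \textbf{(H)'}, by $(1+\delta)KM|Y^m_r|^2+(1+\delta)\alpha|Z^m_r|^2$ plus data. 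Choosing $\varepsilon$ just below $1-\alpha$ and $\delta$ small, condition \textbf{(M)} --- precisely $2\mu-K'-\tfrac{K}{1-\alpha}-KM>0$ --- makes $\varepsilon+(1+\delta)\alpha<1$ and at the same time makes the coefficient $K'-2\mu+\tfrac K\varepsilon+(1+\delta)KM$ of $|Y^m_r|^2$ strictly negative. This gives, with $c,C>0$ independent of $m,n$,
\[\E\big[e^{-K'n}|Y^m_n|^2\big]+c\,\E\int_n^m e^{-K'r}\big(|Y^m_r|^2+|Z^m_r|^2\big)\,dr\le C\,\eta(n),\qquad \eta(n):=\E\int_n^\infty e^{-K'r}\big(1+|X_r|^2+|f(r,0,0,0)|^2+|g(r,0,0,0)|^2\big)\,dr,\]
and $\eta(n)\to0$ as $n\to\infty$ by Lemma \ref{lemma6.1} and the assumptions $f(\cdot,0,0,0),g(\cdot,0,0,0)\in M^{2,-K'}$. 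Taking $n=t$ and adding a Burkholder--Davis--Gundy bound for the supremum gives $\sup_n\big(\E[\sup_{t\le s\le n}e^{-K's}|Y^n_s|^2]+\E\int_t^n e^{-K's}|Z^n_s|^2\,ds\big)<\infty$.

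The Cauchy property follows from the same computation applied to $e^{-K'r}|Y^m_r-Y^n_r|^2$ on $[t,n]$, where $Y^m$ (with terminal value $Y^m_n$) and $Y^n$ (with terminal value $0$) solve the same BDSDE; using \textbf{(M)}, \textbf{(B2)'} and \textbf{(H)'} as before,
\[\E\Big[\sup_{t\le s\le n}e^{-K's}|Y^m_s-Y^n_s|^2\Big]+\E\int_t^n e^{-K's}|Z^m_s-Z^n_s|^2\,ds\le C\,\E\big[e^{-K'n}|Y^m_n|^2\big]\le C\,\eta(n),\]
while the estimate of the previous paragraph also bounds the tails over $[n,\infty)$ by $\eta(n)$. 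Hence, extending $Y^n,Z^n$ by $0$ on $[n,\infty)$, the sequence is Cauchy in $\big(S^{2,-K'}\cap M^{2,-K'}\big)([t,\infty);\R)\times M^{2,-K'}([t,\infty);\R^d)$ and converges to a pair $(Y,Z)$. Passing to the limit in the finite-horizon equations --- the integral against $\ola B(\cdot,X_\cdot)$ converging in $L^2$ thanks to its quadratic variation formula \eqref{eq-quad} and \textbf{(H)'} --- shows that $(Y,Z)$ satisfies \eqref{sz1} on every $[s,T]$, and $\lim_{T\to\infty}e^{-K'T}Y_T=0$ follows from $Y\in S^{2,-K'}$. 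Uniqueness comes from applying the same weighted It\^o estimate to the difference of two solutions on $[t,T]$ and letting $T\to\infty$: the terminal term $e^{-K'T}\E|Y_T-Y'_T|^2$ tends to $0$ because both solutions lie in $S^{2,-K'}$, and then $Z=Z'$ as well.

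The step I expect to be the main obstacle is the weighted $L^2$-estimate on the \emph{long} intervals $[n,m]$: one must check that condition \textbf{(M)} is exactly what is needed for the associated Gronwall-type inequality to close with the correct signs, so that the terminal contribution $\E[e^{-K'n}|Y^m_n|^2]$ decays to $0$ uniformly in $m\ge n$ --- it is this uniform decay that powers the Cauchy argument. By contrast, the spatial parameter in $B$ is handled routinely via \eqref{eq-quad} and \textbf{(H)'}, and the $x$-dependence of $f$ and $g$ via Lemma \ref{lemma6.1}.
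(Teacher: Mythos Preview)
Your overall strategy matches the paper's proof: build finite-horizon solutions $(Y^n,Z^n)$ with zero terminal value via Theorem~\ref{BDSDE}, derive weighted $L^2$ estimates on $[n,m]$ from the It\^o formula and the dissipativity in \textbf{(M)}, bound the Cauchy difference on $[t,n]$ by $\E[e^{-K'n}|Y^m_n|^2]$, and pass to the limit. The splitting of the drift, the use of Lemma~\ref{lemma6.1} for the $x$-contribution, and the Burkholder--Davis--Gundy step for the sup-norm are all as in the paper.

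There is, however, a genuine gap in your uniqueness argument. You assert that $e^{-K'T}\E|Y_T-Y'_T|^2\to0$ ``because both solutions lie in $S^{2,-K'}$'', but membership in $S^{2,-K'}$ only gives $\sup_T\E[e^{-K'T}|\bar Y_T|^2]<\infty$, not decay; and the pathwise terminal condition $e^{-K'T}\bar Y_T\to0$ does not imply $e^{-K'T}|\bar Y_T|^2\to0$ a.s.\ (take, e.g., $\bar Y_T=e^{K'T/2}$). The paper closes this by exploiting the \emph{slack} in \textbf{(M)}: since $2\mu-K'-\tfrac{K}{1-\alpha}-KM>0$, one can pick $K''>K'$ with the same inequality still valid, rerun the It\^o estimate with weight $e^{-K''r}$ to obtain
\[
\E\big[e^{-K''s}|\bar Y_s|^2\big]\le \E\big[e^{-K''T}|\bar Y_T|^2\big]=e^{-(K''-K')T}\,\E\big[e^{-K'T}|\bar Y_T|^2\big],
\]
which is a vanishing factor times a bounded one. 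Insert this $K''$ trick (or an equivalent argument producing genuine decay of the terminal term) and your uniqueness proof goes through.
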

\begin{proof}  First we show the uniqueness of the solution.

Let $(Y_{s}^{t,x},Z_{s}^{t,x})$
and $(\hat{Y}_{s}^{t,x},\hat{Z}_{s}^{t,x})$ be two solutions to
BDSDE (\ref{sz1}). Denote, for $s\ge t,$
\begin{eqnarray*}
&&\bar{Y}_{s}^{t,x}=\hat{Y}_{s}^{t,x}-Y_{s}^{t,x}; \ \ \bar{Z}_{s}^{t,x}=\hat{Z}_{s}^{t,x}-Z_{s}^{t,x};\\
&&\bar{f}(s,x)=f(s,X_s^{t,x},\hat{Y}_s^{t,x},\hat{Z}_s^{t,x})-f(s,X_s^{t,x},Y_s^{t,x},Z_s^{t,x});\\
&&\bar{g}(s,x)=g(s,X_s^{t,x},\hat{Y}_s^{t,x},\hat{Z}_s^{t,x})-g(s,X_s^{t,x},Y_s^{t,x},Z_s^{t,x}).
\end{eqnarray*}
Applying It$\hat {\rm o}$'s formula  to ${\rm
e}^{-K's}{{|\bar{Y}_s^{t,x}|}^2}$ on $[s,T]$, we obtain
\begin{eqnarray*}
&&e^{-K's}{|\bar{Y}_s^{t,x}|}^2-K'\int_s^Te^{-K'r}{|\bar{Y}_r^{t,x}|}^2dr+\int_s^Te^{-K'r}|\bar{Z}_r^{t,x}|^2dr\nonumber\\
%&=&e^{-K'T}{|\bar{Y}_T^{t,x}|}^2+2\int_s^Te^{-K'r}\bar{Y}_r^{t,x}\bar{f}(r,x)dr+\int_s^T{\rm %e}^{-K'r}|\bar{g}(r,x)|^2q(r,X_r^{t,x},X_r^{t,x})dr\nonumber\\
%&&-2\int_s^Te^{-K'r}\bar{Y}_r^{t,x}\bar{g}(r,x)d\ola B(dr, X_r^{t,x})-2\int_s^Te^{-K'r}\bar{Y}_r^{t,x}\bar{Z}_r^{t,x}dW_r\nonumber\\
&=&e^{-K'T}{|\bar{Y}_T^{t,x}|}^2+2\int_s^Te^{-K'r}\bar{Y}_r^{t,x}\big(f(r,X_r^{t,x},\hat{Y}_r^{t,x},\hat{Z}_r^{t,x})-f(r,X_r^{t,x},Y_r^{t,x},\hat{Z}_r^{t,x})\big)dr\nonumber\\
&&+2\int_s^Te^{-K'r}\bar{Y}_r^{t,x}\big(f(r,X_r^{t,x},Y_r^{t,x},\hat{Z}_r^{t,x})-f(r,X_r^{t,x},Y_r^{t,x},Z_r^{t,x})\big)dr\nonumber\\
&&+\int_s^Te^{-K'r}|\bar{g}(r,x)|^2q(r,X_r^{t,x},X_r^{t,x})dr-2\int_s^Te^{-K'r}\bar{Y}_r^{t,x}\bar{g}(r,x)d\ola B(dr, X_r^{t,x})\nonumber\\
&&-2\int_s^Te^{-K'r}\bar{Y}_r^{t,x}\bar{Z}_r^{t,x}dW_r\nonumber\\
&\leq&e^{-K'T}{|\bar{Y}_T^{t,x}|}^2-2\mu\int_s^Te^{-K'r}|\bar{Y}_r^{t,x}|^2dr+{K\over{1-\alpha-\varepsilon}}\int_s^Te^{-K'r}|\bar{Y}_r^{t,x}|^2dr\nonumber\\
&&+(1-\alpha-\varepsilon)\int_s^Te^{-K'r}|\bar{Z}_r^{t,x}|^2dr+K\int_s^Te^{-K'r}|\bar{Y}_r^{t,x}|^2q(r,X_r^{t,x},X_r^{t,x})dr\nonumber\\
&&+\int_s^Te^{-K'r}\alpha_r(\omega)|\bar{Z}_r^{t,x}|^2q(r,X_r^{t,x},X_r^{t,x})dr-2\int_s^Te^{-K'r}\bar{Y}_r^{t,x}\bar{g}(r,x)d\ola B(dr, X_r^{t,x})\nonumber\\
&&-2\int_s^Te^{-K'r}\bar{Y}_r^{t,x}\bar{Z}_r^{t,x}dW_r,
\end{eqnarray*}
where $\varepsilon>0$ is a sufficiently small number, and the last step follows from the conditions {\bf (B2)'}, {\bf (M)} and Young's inequality.

 Taking expectation (in this proof, we shall omit the standard localization procedure for the conciseness), we have
\begin{eqnarray}\label{qi33}
&&\E^{-K's}{|\bar{Y}_s^{t,x}|}^2+(2\mu-K'-{K\over{1-\alpha-\varepsilon}}-KM)\int_s^Te^{-K'r}{|\bar{Y}_r^{t,x}|}^2dr+\varepsilon\int_s^Te^{-K'r}|\bar{Z}_r^{t,x}|^2dr\nonumber\\
&\leq&\E^{-K'T}{|\bar{Y}_T^{t,x}|}^2.
\end{eqnarray}
Taking $K''>K'$ such that $2\mu-K''-{K\over{1-\alpha-\varepsilon}}-KM$ as well,
we can see that (\ref{qi33}) remains true with $K'$ replaced by $K''$.
In particular,
\begin{eqnarray*}
\E^{-K''s}{|\bar{Y}_s^{t,x}|}^2\leq
\E^{-K''T}{|\bar{Y}_T^{t,x}|}^2,
\end{eqnarray*}
and hence
\begin{eqnarray}\label{qi34}
\E^{-K''s}{|\bar{Y}_s^{t,x}|}^2\leq {\rm
e}^{-(K''-K')T}\E^{-K'T}{|\bar{Y}_T^{t,x}|}^2.
\end{eqnarray}
Since $\hat{Y}_{s}^{t,x}, Y_{s}^{t,x}\in S^{2,-K'}\bigcap
M^{2,-K'}([0,\infty);\mathbb{R})$,
\begin{eqnarray*}
\sup_{T\geq0}\E{\rm
e}^{-K'T}{|\bar{Y}_T^{t,x}|}^2\leq
\E\sup_{T\geq0}{\rm
e}^{-K'T}(2{|\hat{Y}_T^{t,x}|}^2+2{|{Y}_T^{t,x}|}^2)<\infty.
\end{eqnarray*}
Therefore, letting $T$ go to infinity in (\ref{qi34}), we have
\begin{eqnarray*}
\E{\rm
e}^{-K''s}{|\bar{Y}_s^{t,x}|}^2=0,
\end{eqnarray*}
which yields the uniqueness.

Now we deduce the existence of the solution. For each $n\in\mathbb{N}$, we define a
sequence of BDSDEs (\ref{bdsde'}) with $\phi=0$ and $T=n$ and denote it
by BDSDE ($\ref{bdsde'}_n$). It is easy to verify that for each
$n$, the BDSDE satisfies the conditions in  Theorem \ref{BDSDE}. Therefore, for each $n$,   the unique solution $({Y}_s^{t,x,n},{Z}_s^{t,x,n})$ of
BDSDE ($\ref{bdsde'}_n$) belongs to $S^{2}([t,n];\mathbb{R})\times
M^{2}([t,n];\mathbb{R}^{d})$ which
is identical with the space
 $S^{2,-K}([t,n];\mathbb{R})\times
M^{2,-K}([t,n];\mathbb{R}^{d})$.
Let $({Y}_s^{t,x,n}, {Z}_s^{t,x,n})=(0, 0)$ for $s\in (n,\infty)$, then
$({Y}_s^{t,x,n},{Z}_s^{t,x,n})\in S^{2,-K}\bigcap
M^{2,-K}([t,\infty);\mathbb{R})\times
M^{2,-K}([t,\infty);\mathbb{R}^{d})$.
In the following, we will prove that $({Y}_s^{t,x,n},{Z}_s^{t,x,n})$ is a Cauchy sequence.

Let $(Y_{s}^{t,x,m},Z_{s}^{t,x,m})$ and
$({Y}_{s}^{t,x,n},{Z}_{s}^{t,x,n})$ be the solutions to
BDSDE ($\ref{bdsde'}_m$) and BDSDE ($\ref{bdsde'}_n$), respectively, and assume $m> n$. Denote, for $s\in [t, \infty),$
\begin{eqnarray*}
&&\bar{Y}_{s}^{t,x,m,n}={Y}_{s}^{t,x,m}-{Y}_{s}^{t,x,n}, \ \
\bar{Z}_{s}^{t,x,m,n}={Z}_{s}^{t,x,m}-{Z}_{s}^{t,x,n}.
\end{eqnarray*}
We will estimate $(Y_s^{t,x, m, n}, Z_s^{t,x,m,n})$ for $s\in [n,m]$ and $s\in[t,n]$, respectively.

{\bf (I).} When $n\leq s\leq m$,
$(\bar{Y}_{s}^{t,x,m,n},\bar{Z}_{s}^{t,x,m,n})=({Y}_{s}^{t,x,m}, {Z}_{s}^{t,x,m})$. Note that
$({Y}_s^{t,x,m},{Z}_s^{t,x,m})$ is the solution to BDSDE ($\ref{bdsde'}_m$), i.e. for $s\in[t,m]$,
\begin{eqnarray*}
\left\{\begin{array}{l}
d{Y}_{s}^{t,x,m}=-f(s,X_{s}^{t,x},{Y}_{s}^{t,x,m},{Z}_{s}^{t,x,m})ds+g(s,X_{s}^{t,x},{Y}_{s}^{t,x,m},{Z}_{s}^{t,x,m})\overleftarrow B(ds,X_s^{t,x})+{Z}_{s}^{t,x,m}dW_s\\
{Y}_{m}^{t,x,m}=0.
\end{array}\right.
\end{eqnarray*}
%Noting that
%$E[\int_{0}^{m}\|g(r,X_{r}^{t,x},Y_{r}^{t,x,m},Z_{r}^{t,x,m})\|^2_{\mathcal{L}^2_{U_0}(\mathbb{R})}dr]<\infty$
%for a.e. $x\in\mathbb{R}^{d}$,
An application of  It\^o's
formula to $e^{-K'r}|Y_{r}^{t,x,m}|^2$ on $[s,m]$ leads to
\begin{eqnarray}\label{eq-ito}
&&e^{-K's}{|Y_s^{t,x,m}|}^2-K'\int_s^me^{-K'r}{|Y_r^{t,x,m}|}^2dr+\int_s^me^{-K'r}|Z_r^{t,x,m}|^2dr\nonumber\\
&=&2\int_s^me^{-K'r}Y_r^{t,x,m}f(r,X_{r}^{t,x},{Y}_{r}^{t,x,m},{Z}_{r}^{t,x,m})dr\\
&&+\int_s^me^{-K'r}|g(r,X_{r}^{t,x},{Y}_{r}^{t,x,m},{Z}_{r}^{t,x,m})|^2q(r,X_r^{t,x},X_r^{t,x})dr\notag\\
&&-2\int_s^me^{-K'r}Y_r^{t,x,m}g(r,X_{r}^{t,x},{Y}_{r}^{t,x,m},{Z}_{r}^{t,x,m})d\ola B(dr, X_r^{t,x})-2\int_s^me^{-K'r}Y_r^{t,x,m}Z_r^{t,x,m}dW_r.\nonumber
\end{eqnarray}
Taking expectation and using the conditions {\bf (B2)'} and {\bf (M)}, we have
\begin{eqnarray*}
&& \E \left[e^{-K's}{|Y_s^{t,x,m}|}^2-K'\int_s^me^{-K'r}{|Y_r^{t,x,m}|}^2dr+\int_s^me^{-K'r}|Z_r^{t,x,m}|^2dr\right]\\
&=&\E\Bigg[ 2\int_s^me^{-K'r}Y_r^{t,x,m}\big(f(r,X_r^{t,x},{Y}_{r}^{t,x,m},{Z}_{r}^{t,x,m})-f(r,X_r^{t,x},0,{Z}_{r}^{t,x,m})\big)dr\nonumber\\
&&+2\int_s^me^{-K'r}Y_r^{t,x,m}\big(f(r,X_r^{t,x},0,{Z}_{r}^{t,x,m})-f(r,0,0,{Z}_{r}^{t,x,m})\big)dr\nonumber\\
&&+2\int_s^me^{-K'r}Y_r^{t,x,m}\big(f(r,0,0,{Z}_{r}^{t,x,m})-f(r,0,0,0)\big)dr\nonumber\\
&&+2\int_s^me^{-K'r}Y_r^{t,x,m}f(r,0,0,0)dr+\int_s^me^{-K'r}|g(r,X_{r}^{t,x},{Y}_{r}^{t,x,m},{Z}_{r}^{t,x,m})|^2q(r,X_r^{t,x},X_r^{t,x})dr\Bigg]\nonumber\\
%&\le & \E\Bigg[-2\mu\int_s^me^{-K'r}|Y_r^{t,x,m}|^2dr+\varepsilon\int_s^me^{-K'r}|Y_r^{t,x,m}|^2dr+\frac K\varepsilon\int_s^me^{-K'r}|X_r^{t,x}|^2dr\nonumber\\
%&&+{K\over{1-\alpha-\varepsilon}}\int_s^me^{-K'r}|{Y}_{r}^{t,x,m}|^2dr+(1-\alpha-\varepsilon)\int_s^me^{-K'r}|Z_r^{t,x,m}|^2dr\nonumber\\
%&&+\varepsilon\int_s^me^{-K'r}|Y_r^{t,x,m}|^2dr+\frac1\varepsilon\int_s^me^{-K'r}|f(r,0,0,0)|^2dr\nonumber\\
%&&+(1+\varepsilon)\int_s^me^{-K'r}|g(r,X_{r}^{t,x},{Y}_{r}^{t,x,m},{Z}_{r}^{t,x,m})-g(r,0,0,0)|^2q(r,X_r^{t,x},X_r^{t,x})dr\nonumber\\
%&&+\left(1+\frac1\varepsilon\right)\int_s^me^{-K'r}|g(r,0,0,0)|^2q(r,X_r^{t,x},X_r^{t,x})dr\Bigg]\nonumber\\
%&\leq &\E\Bigg[-2\mu\int_s^me^{-K'r}|Y_r^{t,x,m}|^2dr+\varepsilon\int_s^me^{-K'r}|Y_r^{t,x,m}|^2dr+\frac K\varepsilon\int_s^me^{-K'r}|X_r^{t,x}|^2dr\nonumber\\
%&&+{K\over{1-\alpha-\varepsilon}}\int_s^me^{-K'r}|{Y}_{r}^{t,x,m}|^2dr+(1-\alpha-\varepsilon)\int_s^me^{-K'r}|Z_r^{t,x,m}|^2dr\nonumber\\
%&&+\varepsilon\int_s^me^{-K'r}|Y_r^{t,x,m}|^2dr+\frac1\varepsilon\int_s^me^{-K'r}|f(r,0,0,0)|^2dr\nonumber\\
%&&+(1+\varepsilon)K\int_s^me^{-K'r}|X_r^{t,x}|^2q(r,X_r^{t,x},X_r^{t,x})dr\nonumber\\
%&&+(1+\varepsilon)K\int_s^me^{-K'r}|Y_r^{t,x,m}|^2q(r,X_r^{t,x},X_r^{t,x})dr\nonumber\\
%&&+(1+\varepsilon)\int_s^me^{-K'r}\alpha_r(\omega)|Z_r^{t,x,m}|^2q(r,X_r^{t,x},X_r^{t,x})dr\nonumber\\
%&&+\left(1+\frac1\varepsilon\right)\int_s^me^{-K'r}|g(r,0,0,0)|^2q(r,X_r^{t,x},X_r^{t,x})dr\Bigg]\nonumber\\
&\leq&\E\Bigg[-\left(2\mu-2\varepsilon-{K\over{1-\alpha-\varepsilon}}-(1+\varepsilon)KM\right)\int_s^me^{-K'r}|Y_r^{t,x,m}|^2dr+C\int_s^me^{-K'r}|X_r^{t,x}|^2dr\nonumber\\
&&+\big(1-\alpha-\varepsilon+(1+\varepsilon)\alpha\big)\int_s^me^{-K'r}|Z_r^{t,x,m}|^2dr+C\int_s^me^{-K'r}|f(r,0,0,0)|^2dr\nonumber\\
&&+C\int_s^me^{-K'r}|g(r,0,0,0)|^2dr\Bigg].
\end{eqnarray*}
%Here and in the following, $C$ is a generic constant whose value may change from line to line.

Therefore,
\begin{eqnarray}\label{zhang671}
&&\E\Bigg[e^{-K's}{|Y_s^{t,x,m}|}^2+\left(2\mu-2\varepsilon-{K\over{1-\alpha-\varepsilon}}-(1+\varepsilon)KM-K'\right)\int_s^me^{-K'r}{|Y_r^{t,x,m}|}^2dr\nonumber\\
&&+(1-\alpha)\varepsilon\int_s^me^{-K'r}|Z_r^{t,x,m}|^2dr\Bigg]\\
&\leq&C\int_s^me^{-K'r}\E[|X_r^{t,x}|^2]dr+C\int_s^me^{-K'r}|f(r,0,0,0)|^2dr+C\int_s^me^{-K'r}|g(r,0,0,0)|^2dr.\nonumber
\end{eqnarray}
Note that the constant $\varepsilon>0$ can be chosen to be
sufficiently small such that all the terms on the left-hand side of
(\ref{zhang671}) are positive. By Lemma \ref{lemma6.1},  we have
\begin{eqnarray}\label{zhang674}
&&\E\int_{n}^{m}e^{-K'r}|Y_{r}^{t,x,m}|^2dr]+E\left[\int_{n}^{m}e^{-K'r}|Z_{r}^{t,x,m}|^2dr\right]\nonumber\\
&\leq&C{\rm
e}^{-K'n}|x|^{2}+C\E\int_{n}^{m}{\rm
e}^{-K'r}(|b(0)|^{2}+|\sigma(0)|^{2})dr\nonumber\\
&&+C\E\int_{n}^{m}{\rm
e}^{-K'r}(|{f}(r,0,0,0)|^2+|g(r,0,0,0)|^2)dr,
\end{eqnarray}
where the right-hand side converges to zero as $n,m\to\infty.$ Applying Burkholder-Davis-Gundy inequality to
(\ref{eq-ito}) on the interval $[n,m]$ and using a similar argument which was used to obtain (\ref{zhang674}), we have
\begin{eqnarray}\label{zhang676}
&&\E\sup_{n\leq s\leq m}e^{-K's}|{Y}_{s}^{t,x,m}|^2\nonumber\\
&\leq&C{\rm
e}^{-K'n}|x|^{2}+C\E\int_{n}^{m}{\rm
e}^{-K'r}(|b(0)|^{2}+|\sigma(0)|^{2})dr\\
&&+C\E\int_{n}^{m}e^{-K'r}(|{f}(r,0,0,0)|^2+|g(r,0,0,0)|^2)dr+C\E\int_{n}^{m}{\rm
e}^{-K'r}(|Y_{r}^{t,x,m}|^2+|Z_{r}^{t,x,m}|^2)dr,\nonumber
\end{eqnarray}
where the right-hand side goes to zero as $n,m\to\infty.$

{\bf(II).} When $t\leq s\leq n$, taking the notations
\begin{eqnarray*}
&&\bar{f}^{m,n}(s,x)=f(s,X_{s}^{t,x},{Y}_{s}^{t,x,m},{Z}_{s}^{t,x,m})-f(s,X_{s}^{t,x},Y_{s}^{t,x,n},Z_{s}^{t,x,n}),\\
&&\bar{g}^{m,n}(s,x)=g(s,X_{s}^{t,x},{Y}_{s}^{t,x,m},{Z}_{s}^{t,x,m})-g(s,X_{s}^{t,x},Y_{s}^{t,x,n},Z_{s}^{t,x,n}),
\end{eqnarray*}
we have
\begin{eqnarray*}
\bar{Y}_{s}^{t,x,m,n}={Y}_{n}^{t,x,m}+\int_{s}^{n}\bar{f}^{m,n}(r,x)dr-\int_{s}^{n}\bar{g}^{m,n}(r,x)\overleftarrow B(ds,X_s^{t,x})-\int_{s}^{n}\bar{Z}_{r}^{t,x,m,n}dW_r.
\end{eqnarray*}
Apply It$\hat {\rm o}$'s formula to ${\rm
e}^{-K'r}{{|\bar{Y}_r^{t,x,m,n}|}^2}$ on $[s,n]$, and then take expectation,
\begin{eqnarray*}
&&\E\left[e^{-K's}{|\bar{Y}_s^{t,x,m,n}|}^2-K'\int_s^ne^{-K'r}{|\bar{Y}_r^{t,x,m,n}|}^2dr+\int_s^ne^{-K'r}|\bar{Z}_r^{t,x,m,n}|^2dr\right]\nonumber\\
&=&\E\Bigg[e^{-K'n}{|{Y}_{n}^{t,x,m}|}^2\nonumber\\
&&+2\int_s^ne^{-K'r}\bar{Y}_r^{t,x,m,n}\big(f(r,X_r^{t,x},{Y}_r^{t,x,m},{Z}_r^{t,x,m})-f(r,X_r^{t,x},Y_r^{t,x,n},{Z}_r^{t,x,m})\big)dr\nonumber\\
&&+2\int_s^ne^{-K'r}\bar{Y}_r^{t,x,m,n}\big(f(r,X_r^{t,x},Y_r^{t,x,n},{Z}_r^{t,x,m})-f(r,X_r^{t,x},Y_r^{t,x,n},Z_r^{t,x,n})\big)dr\nonumber\\
&&+\int_s^ne^{-K'r}|\bar{g}^{m,n}(r,x)|^2q(r,X_r^{t,x},X_r^{t,x})dr\Bigg]\notag\\
&\leq& \E\Bigg[e^{-K'n}{|{Y}_{n}^{t,x,m}|}^2-2\mu\int_s^ne^{-K'r}|\bar{Y}_r^{t,x,m,n}|^2dr+{K\over{1-\alpha-\varepsilon}}\int_s^ne^{-K'r}|\bar{Y}_r^{t,x,m,n}|^2dr\nonumber\\
&&+(1-\alpha-\varepsilon)\int_s^ne^{-K'r}|\bar{Z}_r^{t,x,m,n}|^2dr+K\int_s^ne^{-K'r}|\bar{Y}_r^{t,x,m,n}|^2q(r,X_r^{t,x},X_r^{t,x})dr\nonumber\\
&&+\int_s^ne^{-K'r}\alpha_r(\omega)|\bar{Z}_r^{t,x,m,n}|^2q(r,X_r^{t,x},X_r^{t,x})dr\Bigg].
\end{eqnarray*}
Hence
\begin{align*}\label{zhang677}
&\E\Bigg[e^{-K's}{|\bar{Y}_s^{t,x,m,n}|}^2+\left(2\mu-K'-{K\over{1-\alpha-\varepsilon}}-KM\right)\int_{s}^{n}e^{-K'r}{{|\bar{Y}_r^{t,x,m,n}|}^2}dr\nonumber\\
&\qquad\qquad\qquad\qquad\qquad\qquad\qquad\qquad\qquad  +\varepsilon\int_s^ne^{-K'r}|\bar{Z}_r^{t,x,m,n}|^2dr\Bigg]\notag\\
\leq & \E\left[e^{-K'n}{|{Y}_n^{t,x,m}|}^2\right].
\end{align*}
Taking $\varepsilon$ small enough, we have
\begin{eqnarray}\label{zhang678}
\E\int_{s}^{n}e^{-K'r}{{|\bar{Y}_r^{t,x,m,n}|}^2}dr+\E\int_{s}^{n}e^{-K'r}|\bar{Z}_r^{t,x,m,n}|^2dr\leq C \E\left[e^{-K'n}{|{Y}_n^{t,x,m}|}^2\right],
\end{eqnarray}
where the right-hand side goes to zero as $n,m
$ go to infinity by (\ref{zhang676}).
Also by the Burkholder-Davis-Gundy  inequality, we obtain
\begin{eqnarray}\label{zhang680}
\E\sup_{t\leq s\leq n}{\rm
e}^{-K's}{{|\bar{Y}_s^{t,x,m,n}|}^2}\leq
C \E\left[e^{-K'n}{|{Y}_n^{t,x,m}|}^2\right].\end{eqnarray}

Now combining \eqref{zhang674} --  \eqref{zhang680}, we have 
\begin{eqnarray*}
&&\E\sup_{s\geq t}e^{-Ks}|\bar{Y}_{s}^{t,x,m,n}|^2+\E\int_{t}^{\infty}e^{-Kr}|\bar{Y}_{r}^{t,x,m,n}|^2dr+\E\int_{t}^{\infty}{\rm
e}^{-Kr}|\bar{Z}_{r}^{t,x,m,n}|^2dr
\end{eqnarray*}
goes to zero as $n,m\to\infty.$

Denote by $({Y}_s^{t,x},{Z}_s^{t,x})$  the limit of
$({Y}_s^{t,x,n},{Z}_s^{t,x,n})$ in the space $S^{2,-K}\bigcap
M^{2,-K}([t,\infty);\mathbb{R})\times
M^{2,-K}([t,\infty);\mathbb{R}^{d})$.
 We now show that $({Y}_s^{t,x},{Z}_s^{t,x})$ is a solution to
BDSDE (\ref{sz2}). Since
$({Y}_s^{t,x,n},{Z}_s^{t,x,n})$ satisfies BDSDE ($\ref{bdsde'}_n$), it suffices to
verify that BDSDE ($\ref{bdsde'}_n$) converges to BDSDE (\ref{sz2}) in
$L^2(\Omega)$ as $n\to\infty$. We only show
the convergence of stochastic integral with respect to $\overleftarrow B$, and the convergence of the rest terms can be proven in a similar way. To see it, notice that
\begin{eqnarray*}
&&\E\left|\int_{s}^{n}e^{-K'r}g(r,X_{r}^{t,x},Y_r^{t,x,n},Z_r^{t,x,n})\overleftarrow B(dr,X_r^{t,x})-\int_{s}^{\infty}e^{-K'r}g(r,X_{r}^{t,x},Y_r^{t,x},Z_r^{t,x})\overleftarrow B(dr,X_r^{t,x})\right|^2\\
&\leq&2\E|\int_{s}^{n}e^{-K'r}\big(g(r,X_{r}^{t,x},Y_r^{t,x,n},Z_r^{t,x,n})-g(r,X_{r}^{t,x},Y_r^{t,x},Z_r^{t,x})\big)\overleftarrow B(dr,X_r^{t,x})|^2\\
&&+2\E|\int_{n}^{\infty}e^{-K'r}g(r,X_{r}^{t,x},Y_r^{t,x},Z_r^{t,x})\overleftarrow B(dr,X_r^{t,x})|^2.
\end{eqnarray*}
As $n\to \infty,$ each term on the right-hand side of the above
inequality tends to zero, since
\begin{eqnarray*}
&&\E|\int_{s}^{n}e^{-K'r}\big(g(r,X_{r}^{t,x},Y_r^{t,x,n},Z_r^{t,x,n})-g(r,X_{r}^{t,x},Y_r^{t,x},Z_r^{t,x})\big)B(dr,X_r^{t,x})|^2\\
&\leq&C\E\int_{t}^{\infty}{\rm
e}^{-K'r}(|Y_r^{t,x,n}-Y_r^{t,x}|^2+|Z_r^{t,x,n}-Z_r^{t,x}|^2)dr
\end{eqnarray*}
and
\begin{eqnarray*}
&&\E|\int_{n}^{\infty}e^{-K'r}g(r,X_{r}^{t,x},Y_r^{t,x},Z_r^{t,x})B(dr,X_r^{t,x})|^2\\
&\leq&C\E\int_{n}^{\infty}e^{-K'r}(|X_r^{t,x}|^2+|Y_r^{t,x}|^2+|Z_r^{t,x}|^2)dr+C\int_{n}^{\infty}{\rm
e}^{-K'r}|g(r,0,0,0)|^2dr.
\end{eqnarray*}

 The proof  is concluded. \hfill
\end{proof}

%By a similar method as in the proof of the existence
%part in case (i) in Theorem \ref{qi071}, we have the
%following estimation:
%\begin{prop}\label{qi072}
%Let $({Y}_s^{t,x,n},{Z}_s^{t,x,n})$ be the solution of
%BDSDE {\rm($\ref{bdsde'}_n$)}, then under the conditions of Theorem
%\ref{qi071},
%\begin{eqnarray*}
%\sup_n\E\sup_{s\geq0}{\rm
%e}^{-K's}|Y_s^{t,x,n}(x)|^2+\sup_n\E\int_{0}^{\infty}{\rm
%e}^{-K'r}|Y_r^{t,x,n}(x)|^2dr+\sup_n\E\int_{0}^{\infty}{\rm
%e}^{-K'r}|Z_r^{t,x,n}(x)|^2dr<\infty.
%\end{eqnarray*}
%\end{prop}

 From now on, we assume the noise $(B(t,x), t\ge0, x\in\R^d)$ in SPDEs \eqref{spde} and \eqref{zhang685} is a centered Gaussian random field with covariance function
\begin{equation}\label{eq6-11}
\E[B(t,x)B(s,y)]=(t\wedge s)q(x,y),
\end{equation}
where $q(x,y)$ is a positive-definite function (see e.g. \cite[Section 5]{hns}).  The condition {\bf (H)'} for $q(x,y)$ now becomes
$\sup_{(x,y)\in \R^{2d}}|q(x,y)|\le M.$ Note that  $(B(\cdot, x), x\in\R^d)$ is a family of Brownian motions (up to a multiplicative constant) with covariance $q(x,y)$, and the joint quadratic variation is given by
$$\langle B(\cdot, x), B(\cdot, y)\rangle_t=t q(x,y). $$
We now construct a measurable metric dynamical system $(\Omega, \mathscr{F}, P, (\theta_t)_{t\ge 0})$, where
${\theta}_t:\Omega\to\Omega$ is a measurable and measure-preserving mapping defined by
${\theta}_{t}\circ {B}(s,x)={B}(s+t,x)-{B}(t,x) $ for $x\in\mathbb{R}^d$, and  ${\theta}_{t}\circ W_s=W_{s+t}-W_t$. Then for any $s,t\geq0$,
\begin{description}
\item[$(\textrm{i})$]$P(\theta^{-1}(A))=P(A), $ for all $A\in \mathcal F$;
\item[$(\textrm{i}\textrm{i})$]${\theta}_{0}=I$, where $I$ is the identity transformation on $\Omega$;
\item[$(\textrm{i}\textrm{i}\textrm{i})$]${\theta}_{s}\circ{\theta}_{t}={\theta}_{s+t}$.
\end{description}

Set, for any $\mathscr{F}$-measurable mapping $\phi$ defined on $\Omega$,
\begin{eqnarray*}
{\theta}\circ\phi(\omega)=\phi\big({\theta}(\omega)\big).
\end{eqnarray*}

For any $r\geq0$, $s\geq t$, $x\in\mathbb{R}^d$, apply the transformation $\theta_r$ to
SDE (\ref{sde}), and then it follows that
\begin{eqnarray*}
{\theta}_r\circ
X_{s}^{t,x}=x+\int_{t+r}^{s+r}b({\theta}_r\circ
X_{u-r}^{t,x})du+\int_{t+r}^{s+r}\sigma({\theta}_r\circ
X_{u-r}^{t,x})dW_u.
\end{eqnarray*}
So by the uniqueness of the solution and a perfection procedure
(see e.g. \cite{ar}), we have
\begin{eqnarray}\label{qi18}
{\theta}_r\circ X_{s}^{t,x}=X_{s+r}^{t+r,x}\ \ {\rm
for}\ {\rm all}\ r,s,t,x,\ \ \ {\rm a.s.}
\end{eqnarray}

For a given period $\tau>0$, we consider the random periodic solution to BDSDE (\ref{sz1}). For this, we assume the following random periodic condition on the coefficients.
\begin{description}
\item{\bf (P)}\
For any $t\in[0,\infty)$, $(x,y,z)\in\mathbb{R}^d\times\R\times \R^{1\times d}$, $$f(t,x,y,z)=f(t+\tau,x,y,z)\ \ \text{and}\ \ g(t,x,y,z)=g(t+\tau,x,y,z).$$
\end{description}

%\begin{equation}\label{qi2}
%\begin{cases}
%&Y_s^{t,x}=Y_T^{t,x}+\int_s^T f(X_u^{t,x},Y_u^{t,x},Z_u^{t,x})du\\
%&\qquad\ \ \ \ -\int_s^T g(X_u^{t,x},Y_u^{t,x},Z_u^{t,x})\overleftarrow B(du,X_u^{t,x})-\int_s^T Z_u^{t,x}d W_u, \quad s\in[t,T],\\
%&\lim_{T\to\infty}e^{-K'T}Y_T^{t,x}=0,
%\end{cases}
%\end{equation}

\begin{prop}\label{qi031}
Assume  {\bf (H)', (A1), (B2)', (M)} and {\bf (P)}, then
the unique solution $(Y_s^{t,x}, Z_s^{t,x})_{s\geq t}$ to BDSDE (\ref{sz1}) is a ``crude''
random periodic solution, i.e.  for any $0\le t\le s$,
\begin{eqnarray*}
{\theta}_\tau\circ Y_s^{t,x}=Y_{s+\tau}^{t+\tau,x}, \ \ {\theta}_\tau\circ
Z_s^{t,x}=Z_{s+\tau}^{t+\tau,x}\ \  {\rm a.s.}
\end{eqnarray*}
In particular, for any $t\geq0$,
\begin{eqnarray}\label{qi19}
{\theta}_\tau\circ Y_{t}^{t,\cdot}=Y_{t+\tau}^{t+\tau,\cdot}\ \ \ {\rm a.s.}
\end{eqnarray}
\end{prop}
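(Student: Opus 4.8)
The plan is to apply the metric dynamical system shift $\theta_\tau$ to BDSDE (\ref{sz1}), show that the shifted pair $(\theta_\tau\circ Y_{s-\tau}^{t,x},\theta_\tau\circ Z_{s-\tau}^{t,x})_{s\ge t+\tau}$ solves BDSDE (\ref{sz1}) with initial time $t+\tau$ in place of $t$, and then invoke the uniqueness part of Theorem \ref{qi071} to identify it with $(Y_s^{t+\tau,x},Z_s^{t+\tau,x})_{s\ge t+\tau}$. The preliminary ingredients are the commutation identities. From (\ref{qi18}) we have $\theta_\tau\circ X_s^{t,x}=X_{s+\tau}^{t+\tau,x}$ for all $s\ge t$, a.s. Since $\theta_\tau$ is measure-preserving and $\theta_\tau\circ W_s=W_{s+\tau}-W_\tau$, one has, for any $(\mathcal F_t)$-adapted integrand $\Phi$, $\theta_\tau\circ\int_s^T\Phi_r\,dW_r=\int_{s+\tau}^{T+\tau}(\theta_\tau\circ\Phi_{r-\tau})\,dW_r$. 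The analogous identity for the It\^o--Kunita integral, namely $\theta_\tau\circ\int_s^T g_r\,\ola B(dr,X_r^{t,x})=\int_{s+\tau}^{T+\tau}(\theta_\tau\circ g_{r-\tau})\,\ola B(dr,X_r^{t+\tau,x})$, follows from the Riemann-sum approximation recalled in Section 2: applying $\theta_\tau$ to a generic summand $g_{t_{k+1}}\big[B(t_{k+1},X_{t_{k+1}}^{t,x})-B(t_k,X_{t_{k+1}}^{t,x})\big]$ and using $\theta_\tau\circ B(u,x)=B(u+\tau,x)-B(\tau,x)$ together with (\ref{qi18}), the two occurrences of $B(\tau,\cdot)$ cancel and one is left with $(\theta_\tau\circ g_{t_{k+1}})\big[B(t_{k+1}+\tau,X_{t_{k+1}+\tau}^{t+\tau,x})-B(t_k+\tau,X_{t_{k+1}+\tau}^{t+\tau,x})\big]$, which is the Riemann sum over the shifted partition for the right-hand integral.

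Granting these identities, set $\bar Y_s:=\theta_\tau\circ Y_{s-\tau}^{t,x}$ and $\bar Z_s:=\theta_\tau\circ Z_{s-\tau}^{t,x}$ for $s\ge t+\tau$. Applying $\theta_\tau$ to (\ref{sz1}), performing the change of variable $r\mapsto r-\tau$ in every integral, and using the periodicity assumption {\bf (P)} in the form $f(r-\tau,x,y,z)=f(r,x,y,z)$, $g(r-\tau,x,y,z)=g(r,x,y,z)$ (valid since $r-\tau\ge s-\tau\ge t\ge0$), one obtains, for $s\in[t+\tau,T+\tau]$ and every $T\ge t$,
\[
\bar Y_s=\bar Y_{T+\tau}+\int_s^{T+\tau}f(r,X_r^{t+\tau,x},\bar Y_r,\bar Z_r)\,dr-\int_s^{T+\tau}g(r,X_r^{t+\tau,x},\bar Y_r,\bar Z_r)\,\ola B(dr,X_r^{t+\tau,x})-\int_s^{T+\tau}\bar Z_r\,dW_r,
\]
together with $\lim_{T\to\infty}e^{-K'(T+\tau)}\bar Y_{T+\tau}=e^{-K'\tau}\lim_{T\to\infty}\theta_\tau\circ\big(e^{-K'T}Y_T^{t,x}\big)=0$ a.s. Moreover $(\bar Y,\bar Z)\in S^{2,-K'}\cap M^{2,-K'}([t+\tau,\infty);\R)\times M^{2,-K'}([t+\tau,\infty);\R^{d})$: since $\sup_{s}e^{-K's}|\bar Y_s|^2=e^{-K'\tau}\,\theta_\tau\circ\big(\sup_{u}e^{-K'u}|Y_u^{t,x}|^2\big)$ (and similarly for the $M^{2,-K'}$-norms), measure-invariance of $P$ under $\theta_\tau$ transfers the integrability of $(Y^{t,x},Z^{t,x})$ to $(\bar Y,\bar Z)$. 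Hence $(\bar Y,\bar Z)$ solves the infinite horizon BDSDE (\ref{sz1}) with initial time $t+\tau$, and by the uniqueness assertion of Theorem \ref{qi071}, $(\bar Y_s,\bar Z_s)=(Y_s^{t+\tau,x},Z_s^{t+\tau,x})$ a.s. for each fixed $s\ge t+\tau$; equivalently $\theta_\tau\circ Y_s^{t,x}=Y_{s+\tau}^{t+\tau,x}$ and $\theta_\tau\circ Z_s^{t,x}=Z_{s+\tau}^{t+\tau,x}$ a.s.\ for fixed $0\le t\le s$ and $x$. Taking $s=t$ gives (\ref{qi19}).

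The main obstacle I anticipate is the careful justification of the commutation of $\theta_\tau$ with the It\^o--Kunita integral against $\ola B(dr,X_r^{t,x})$: one must argue that the Riemann sums converge in probability before and after applying $\theta_\tau$, so that the exceptional null sets can be arranged to yield the a.s.\ identity for fixed $(t,s,x)$ — this uses the a.s.\ continuity of $X^{t,x}$, the quadratic-variation formula (\ref{eq-quad}), and measure-invariance of $P$. A secondary point is the infinite-horizon bookkeeping: since Theorem \ref{qi071} constructs the solution of (\ref{sz1}) as an $L^2$-limit of finite-horizon solutions (whose fixed terminal times $n$ are \emph{not} $\tau$-periodic), it is cleanest to apply the argument above directly to the limiting equation (\ref{sz1})/(\ref{sz2}), where the uniqueness statement is available, rather than to the approximating sequence. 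Note finally that only the ``crude'' (fixed-parameter) version is claimed here; upgrading the identity to hold simultaneously for all $(t,s,x)$ would require an additional perfection procedure of the type referenced in \cite{ar}.
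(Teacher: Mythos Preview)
Your proof is correct and follows the same overall strategy as the paper: apply $\theta_\tau$ to the infinite horizon BDSDE, show that the shifted pair solves the equation with initial time $t+\tau$, and conclude by uniqueness (Theorem \ref{qi071}). The structure, the use of (\ref{qi18}), condition {\bf (P)}, and the final identification step are identical.

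The one genuine technical difference is how the commutation of $\theta_\tau$ with the backward It\^o--Kunita integral is justified. You argue directly at the level of Riemann sums, using $\theta_\tau\circ B(u,x)=B(u+\tau,x)-B(\tau,x)$ and the cancellation of the $B(\tau,\cdot)$ terms in the increment. The paper instead introduces the time-reversed field $\hat B(s,x)=B(T'-s,x)-B(T',x)$, rewrites the backward integral $\int \ola B(dr,\cdot)$ as a forward integral $-\int d\hat B(r,\cdot)$, applies $\theta_\tau$ to $\hat B$ (where the shift action becomes $\theta_\tau\circ\hat B(s,x)=\hat B(s-\tau,x)-\hat B(-\tau,x)$), and then reverses back. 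Your route is more elementary and transparent; the paper's detour through $\hat B$ has the advantage that once the backward integral is converted to a forward one, the $\theta_\tau$-action is a routine change of variable in a standard It\^o integral, so the convergence issues you flag (passing $\theta_\tau$ through the Riemann-sum limit) are handled by the standard theory rather than needing a separate argument. Both approaches are valid; yours is shorter, the paper's is perhaps more robust to generalization.
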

\begin{proof}
Let $\hat{B}(s,x)={B}(T'-s,x)-{B}(T',x)$ for arbitrary
$T'>0$ and $-\infty<s\leq T'$. Then $\hat{B}(s,x)$ is a local martingales with the same joint quadratic variation as ${B}(s,x)$
and $\hat{B}(0,x)=0$. Then for an $\mathscr F_t$-adapted square integrable process $\{h(s)\}_{s\geq0}$ and any $r\ge0$,
 \begin{eqnarray}\label{bmrelation}
\int_{t+r}^{T+r}h(s-r)d\overleftarrow B(s,X_s^{t+r,x})=-\int_{T'-T-r}^{T'-t-r}
h({T'-s-r})d\hat{B}(s,X_{T'-s}^{t+r,x})\ \ \ \
\end{eqnarray}
for $r\ge0.$
Applying ${\theta}_\tau$ to
$\hat{B}(s,X_u^{t,x})$,  we have
\begin{eqnarray}\label{sz3}
{\theta}_\tau\circ\hat{B}(s,x)&=&{\theta}_\tau\circ({B}({T'-s},x)-{B}({T'},x))={B}({T'-s+\tau},x)-{B}({T'+\tau},x)\nonumber\\
&=&\big({B}({T'-s+\tau},x)-{B}({T'},x)\big)-\big({B}({T'+\tau},x)-{B}(T',x)\big)\nonumber\\
&=&\hat{B}({s-\tau},x)-\hat{B}({-\tau},x).
\end{eqnarray}
So for $0\leq t\leq T\leq T'$, by (\ref{qi18}), (\ref{bmrelation}) and (\ref{sz3})
\begin{eqnarray}\label{qi5}
{\theta}_\tau\circ\int_{t}^{T}h(s)d\overleftarrow B(s,X_s^{t,x})&=&-{\theta}_\tau\circ\int_{T'-T}^{T'-t}h(T'-s)d\hat{B}(s,X_{T'-s}^{t,x})\nonumber\\
&=&-\int_{T'-T}^{T'-t}{\theta}_\tau\circ h(T'-s)d\hat{B}({s-\tau},{\theta}_\tau\circ X_{T'-s}^{t,x})\nonumber\\
&=&-\int_{T'-T}^{T'-t}{\theta}_\tau\circ h(T'-s)d\hat{B}({s-\tau},X_{T'-s+\tau}^{t+\tau,x})\nonumber\\
&=&-\int_{T'-T-\tau}^{T'-t-\tau}{\theta}_\tau\circ h(T'-s-\tau)d\hat{B}({s},X_{T'-s}^{t+\tau,x})\nonumber\\
%&=&\int_{t+\tau}^{T+\tau}{\theta}_\tau\circ h(s-\tau)d\overleftarrow B(s,{\theta}_\tau\circ X_{s-\tau}^{t+\tau,x})\nonumber\\
&=&\int_{t+\tau}^{T+\tau}{\theta}_\tau\circ h(s-\tau)d\overleftarrow B(s,X_{s}^{t+\tau,x}).\notag
\end{eqnarray}
Therefore, by condition {\bf (P)},
\begin{eqnarray}\label{qi6}
&&{\theta}_\tau\circ\int_t^T g(u,X_u^{t,x},Y_u^{t,x},Z_u^{t,x})\overleftarrow B(du,X_u^{t,x})\nonumber\\
&=&\int_{t+\tau}^{T+\tau}g(u,X_u^{t+\tau,x},{\theta}_\tau\circ
Y_{u-\tau}^{t,x},{\theta}_\tau\circ Z_{u-\tau}^{t,x})\overleftarrow B(du,X_u^{t+\tau,x}).
\end{eqnarray}

We apply ${\theta}_\tau$ to BDSDE (6.1), and then get, by (\ref{qi6}),
\begin{equation}\label{qi7}
\begin{cases}
&{\theta}_\tau\circ Y_s^{t,x}={\theta}_\tau\circ Y_T^{t,x}+\int_{s+\tau}^{T+\tau}f(u,X_u^{t+\tau,x},{\theta}_\tau\circ
Y_{u-\tau}^{t,x},{\theta}_\tau\circ Z_{u-\tau}^{t,x})du\\
&\qquad\ \ \ \ -\int_{s+\tau}^{T+\tau}g(u,X_u^{t+\tau,x},{\theta}_\tau\circ
Y_{u-\tau}^{t,x},{\theta}_\tau\circ Z_{u-\tau}^{t,x})\overleftarrow B(du,X_u^{t+\tau,x})- \int_{s+\tau}^{T+\tau}{\theta}_\tau\circ Z_{u-\tau}^{t,x}d W_u\\
&\lim\limits_{T\to\infty}e^{-K'(T+\tau)}{\theta}_\tau\circ Y_T^{t,x}=0.
\end{cases}
\end{equation}
On the other hand, BDSDE (\ref{sz1}) implies \begin{equation}\label{qi8}
\begin{cases}
&Y_{s+\tau}^{t+\tau,x}=Y_{T+\tau}^{t,x}+\int_{s+\tau}^{T+\tau} f(u,X_u^{t+\tau,x},Y_u^{t+\tau,x},Z_u^{t+\tau,x})du\\
&\qquad\ \ \ \ -\int_{s+\tau}^{T+\tau} g(u,X_u^{t+\tau,x},Y_u^{t+\tau,x},Z_u^{t+\tau,x})\overleftarrow B(du,X_u^{t+\tau,x})-\int_{s+\tau}^{T+\tau} Z_u^{t+\tau,x}d W_u, \quad s\in[t,T],\\
&\lim\limits_{T\to\infty}e^{-K'(T+\tau)}Y_{T+\tau}^{t+\tau,x}=0.
\end{cases}
\end{equation}
By the uniqueness of the solution to BDSDE (\ref{sz1}), it
follows from comparing (\ref{qi7}) with (\ref{qi8}) that for any $t\geq0$,
\begin{eqnarray*}
{\theta}_\tau\circ Y^{t,x}_s=Y^{t+\tau,x}_{s+\tau}, \ \
{\theta}_\tau\circ Z^{t,x}_s=Z^{t+\tau,x}_{s+\tau},\ \ {\rm for} \ s\geq t.
\end{eqnarray*}
The proof is concluded. \hfill
\end{proof}

For the infinite horizon BDSDE, we shall assume the following condition instead of {\bf (B3)}.
\begin{description}
\item{\bf (B3)'} (i) For any $t\in [0,\infty), f(t,\cdot,\cdot,\cdot) \text { and } g(t,\cdot,\cdot,\cdot)$  are of class  $C^3$,
and all their derivatives are  bounded on  $[0,\infty)\times\R^d\times\R\times \R^{d}$.

(ii) $g$ is uniformly bounded, $ |g_z(t,x,y,z)|^2 q(t,x,x) \le \alpha <1,$ and   $|g_y(t,x,y,z)|^2 q(t,x,x)<C<\infty$, for $(t,x,y,z)\in [0,\infty)\times\R^d\times\R\times \R^{d} $. %\\
%
%(iii) $\phi\in C^3(\R^d; \R).$
\end{description}

Similar to Theorem \ref{t.6.2}, the solution to the infinite horizon BDSDE also possesses path regularity.
\begin{theorem}\label{sz4}
Assume  {\bf (H)', (A2)}, {\bf (B3)'} and {\bf (M)}, and additionally assume that, for some constant $K>0$ and $\gamma>0$,
\[ |q(x,x)-q(x,y)|\le K|x-y|^\gamma,\, \text{ for all }  t\in [0,T].\]
 Then the random field $\{Y_s^{t,x}; s\geq t\geq0, x\in \R^d\}$, which is the solution to BDSDE (\ref{sz1}), has a version whose trajectories belong to $C^{0,0,2}([0,\infty)^2\times \R^d).$
\end{theorem}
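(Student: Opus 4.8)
The plan is to follow the strategy of the proof of Theorem~\ref{t.6.2}, replacing the finite-horizon Gronwall argument by the dissipativity condition {\bf (M)}, which is exactly what produces estimates whose constants do not depend on the truncation horizon. Recall from the proof of Theorem~\ref{qi071} that $(Y^{t,x},Z^{t,x})$ is obtained as the limit, in $S^{2,-K'}\cap M^{2,-K'}\times M^{2,-K'}$, of the solutions $(Y^{t,x,n},Z^{t,x,n})$ to the finite-horizon BDSDEs $(\ref{bdsde'}_n)$ with $\phi=0$ and $T=n$; by Theorem~\ref{t.6.2}, applied on $[0,n]$ with {\bf (B3)'} in place of {\bf (B3)}, each $\{Y^{t,x,n}_s\}$ admits a version in $C^{0,0,2}([0,n]^2\times\R^d)$. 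Since $C^{0,0,2}$-regularity is a local (on compacts in $(s,t,x)$) property, it suffices to produce moment estimates for the increments of $Y^{t,x}$ that are uniform in $n$ and then invoke Kolmogorov's continuity theorem; the uniform-in-$n$ bounds survive the passage to the limit.

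First I would treat continuity in the time variable $s$: for fixed $(t,x)$ and $t\le s_1\le s_2$, writing the equation for $Y^{t,x}_{s_2}-Y^{t,x}_{s_1}$ and using the uniform boundedness of $g$ and of the first derivatives of $f$ from {\bf (B3)'}, the boundedness of $q$ from {\bf (H)'}, together with the integrability $\E\big(\int_t^\infty e^{-K'r}|Z_r^{t,x}|^2\,dr\big)^{p}<\infty$ and the bounds for $\sup_s e^{-K's}|Y^{t,x}_s|$ — the latter coming from Theorem~\ref{qi071} and its $p$-moment refinement, obtained as in Theorem~\ref{t.5.2} but with exponential weights — one gets $\E|Y^{t,x}_{s_2}-Y^{t,x}_{s_1}|^{2p}\le C|s_1-s_2|^{p}$ on bounded time intervals, hence a continuous version in $s$ by Kolmogorov.

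The core of the argument is the joint regularity in $(t,x)$. For $t_1,t_2\in[0,s]$, $|x_1|\vee|x_2|\le R$, set $X^i_r=X^{t_i,x_i}_r$, $Y^i_r=Y^{t_i,x_i}_r$, $Z^i_r=Z^{t_i,x_i}_r$ and write the BDSDE for $Y^2_s-Y^1_s$ as in \eqref{e.6.4} (with the terminal term now absent since $\phi=0$, working at the level of the approximations). Applying It\^o's formula to $e^{-K'r}|Y^2_r-Y^1_r|^{2p}$ and taking expectations, the crucial point is that the drift difference $f(r,X^2_r,Y^2_r,Z^2_r)-f(r,X^2_r,Y^1_r,Z^2_r)$ contributes, via {\bf (M)}, a term bounded above by $-2p\mu|Y^2_r-Y^1_r|^{2p}$; this, combined with the weight $e^{-K'r}$, the bounds $|g_z|^2q\le\alpha<1$ and $|g_y|^2q\le C$ from {\bf (B3)'}(ii), and Young's inequality, lets me absorb the factors $|Y^2_r-Y^1_r|^{2p-2}|Z^2_r-Z^1_r|^2$ and close the estimate with a constant independent of the horizon (this is where Gronwall is replaced). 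The terms involving $q_{i,j}(r)=q(X^i_r,X^j_r)$ are handled by the H\"older hypothesis $|q(x,x)-q(x,y)|\le K|x-y|^\gamma$ together with the uniform boundedness of $g$ and its derivatives, exactly as in \eqref{e.6.7}. Invoking the flow estimate $\E\big(\sup_{r\le s}|X^2_r-X^1_r|^m\big)\le C_m(1+|x_1|^m+|x_2|^m)(|x_2-x_1|^m+|t_2-t_1|^{m/2})$ and Lemma~\ref{lemma6.1} for the weighted moments of $X$, I arrive at
\[
\E\big(|Y^{t_2,x_2}_s-Y^{t_1,x_1}_s|^{2p}\big)\le C_{p,R}\big(|x_2-x_1|^{p\gamma}+|t_2-t_1|^{p\gamma/2}\big),\qquad \forall p\ge1,
\]
with $C_{p,R}$ independent of $n$. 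Kolmogorov's theorem then yields a continuous version of $\{Y^{t,x}_s\}$ in $(t,x)$, and a parallel argument using the Burkholder--Davis--Gundy inequality (as in Theorem~\ref{t.5.2}) upgrades this to control of $\E\sup_{r\le s}|Y^2_r-Y^1_r|^{2p}$ and $\E\big(\int|Z^2_r-Z^1_r|^2\,dr\big)^{p}$.

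Finally, the existence of continuous first and second spatial derivatives follows by iteration: the difference quotients $\Delta^i_hY^{t,x}_s=h^{-1}(Y^{t,x+he_i}_s-Y^{t,x}_s)$ satisfy a BDSDE of the same type, obtained by the mean-value expansion of $f,g,\phi$, so applying the previous estimate to $\Delta^i_hY^{t,x}-\Delta^i_{h'}Y^{t,x'}$ — using in addition the \eqref{e.6.11'}-type bound for the difference of backward integrals against $B(\cdot,X^{t,x+he_i})$ and $B(\cdot,X^{t,x'+h'e_i})$ — gives $\E|\Delta^i_hY^{t,x}_s-\Delta^i_{h'}Y^{t,x'}_s|^{2p}\le C(|h-h'|^{p\gamma}+|x-x'|^{p\gamma})$, hence a continuous $\nabla_xY$; one more iteration produces the continuous second derivative. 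The main obstacle throughout is the uniformity of all constants in the horizon $n$: this is what forces the use of {\bf (M)} instead of Gronwall, the careful bookkeeping of the weights $e^{-K'r}$, and, as a prerequisite, establishing the $p$-moment bounds for $Y^{t,x}$, $Z^{t,x}$ and $\nabla Y^{t,x}$ on $[0,\infty)$ in the weighted norms — the infinite-horizon analogues of Theorem~\ref{t.5.2} and Proposition~\ref{p.6.2}.
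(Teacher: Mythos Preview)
Your proposal is correct and follows exactly the route the paper intends: the paper omits the proof of Theorem~\ref{sz4} entirely, indicating only that it is ``Similar to Theorem~\ref{t.6.2}.'' You have correctly identified the one substantive modification needed in that adaptation, namely that the monotonicity condition {\bf (M)} together with the weight $e^{-K'r}$ replaces the Gronwall step in \eqref{e.6.5}--\eqref{e.6.7}, which is precisely what makes the constants uniform in the truncation horizon $n$ and allows passage to the limit.
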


For $t\geq0$, define
$u(t,x)=Y_t^{t,x}$, where
$(Y_{s}^{t,x},Z_{x}^{t,x})_{s\ge t}$ is the solution to
BDSDE (\ref{sz1}), then it follows from Theorem \ref{sz4} that $u\in C^{0,2}([0,\infty)\times{\mathbb{R}^{d}})$. For arbitrary $T>0$, we consider SPDE \eref{zhang685} on the interval $[0,T]$. Note that there is no given terminal condition for SPDE \eref{zhang685}, so {\bf (B3)(iii)} is not needed for the solvability and regularity of its solution. By Theorem \ref{sz5}, $u(t,x)=Y_t^{t,x}$ is a classical solution
to SPDE \eref{zhang685} and we have the following theorem.
\begin{theorem}\label{qi044} Assume the same conditions in Theorem \ref{sz4}. Let $u(t,x)\triangleq Y_{t}^{t,x}$, where
$(Y_{s}^{t,x},Z_{s}^{t,x})_{s\ge t}$ is the solution to
BDSDE (\ref{sz1}). Then for arbitrary $T$ and $t\in[0,T]$,
$u(t,x)$ is a solution to SPDE (\ref{zhang685}).
%Moreover,
%$u(t,\cdot)$ is {\rm a.s.} continuous w.r.t. $t$ in
%$L_{\rho}^2(\mathbb{R}^d;\mathbb{R})$.
\end{theorem}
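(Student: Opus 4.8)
The plan is to fix an arbitrary $T>0$ and reduce the assertion on $[0,T]$ to the finite‑horizon Feynman--Kac theorem already established. Once the flow (cocycle) property of the solution to \eqref{sz1} is available, the pair $(Y^{t,x}_s,Z^{t,x}_s)_{s\in[t,T]}$ solves on $[t,T]$ precisely the finite‑horizon BDSDE \eqref{bdsde'} with terminal data $\phi(\cdot):=u(T,\cdot)=Y^{T,\cdot}_T$ and with $g$ replaced by $-g$ (the sign being consistent with the one in \eqref{zhang685}), and the corresponding SPDE \eqref{spde'} with $\phi=u(T,\cdot)$ and $g\to-g$ is exactly \eqref{zhang685} restricted to $[0,T]$. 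All the ingredients needed to run, \emph{mutatis mutandis}, the proof of Theorem \ref{sz5} are in place: Theorem \ref{sz4} gives $u\in C^{0,2}([0,\infty)\times\R^d)$ a.s.\ (in particular $u(T,\cdot)\in C^2$), Theorem \ref{qi071} provides existence, uniqueness and the integrability of $(Y^{t,x},Z^{t,x})$ in the weighted spaces, and under {\bf (H)$'$} one has $q(s,X^{t,x}_s,X^{t,x}_s)\le M$, so every integrability bound used in the finite‑horizon proofs (there obtained via Lemma \ref{exponential}) holds here in a more elementary form.

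First I would show that $u(s,X^{t,x}_s)=Y^{t,x}_s$ for $0\le t\le s$, a.s. By the flow property of \eqref{sde} one has $X^{t,x}_r=X^{s,X^{t,x}_s}_r$ for $r\ge s$, so both $(Y^{t,x}_r,Z^{t,x}_r)_{r\ge s}$ and $(Y^{s,X^{t,x}_s}_r,Z^{s,X^{t,x}_s}_r)_{r\ge s}$ solve the infinite‑horizon equation driven by the same diffusion and subject to the same asymptotic condition; the uniqueness part of Theorem \ref{qi071}, together with the usual perfection argument to make the exceptional set independent of $(t,s,x)$, forces them to coincide. Next I would record the infinite‑horizon analogue of Proposition \ref{p.6.2}: arguing exactly as there, using {\bf (A2)}, {\bf (B3)$'$} and the regularity of Theorem \ref{sz4}, one obtains $Z^{t,x}_s=\nabla Y^{t,x}_s(\nabla X^{t,x}_s)^{-1}\sigma(X^{t,x}_s)$, and combined with the flow property and $u\in C^{0,2}$ this yields the crucial identity $Z^{t,x}_s=\nabla u(s,X^{t,x}_s)\sigma(X^{t,x}_s)$ for $s\ge t$.

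With these two facts the remaining steps are exactly those in the proof of Theorem \ref{sz5}: for $h>0$ one decomposes $u(t+h,x)-u(t,x)$ through the intermediate term $u(t+h,X^{t,x}_{t+h})$, applies the Itô formula to $s\mapsto u(t+h,X^{t,x}_s)$ on $[t,t+h]$ (a classical Itô formula, since $X^{t,x}$ carries only the $W$‑noise and $u(t+h,\cdot)\in C^2$), uses \eqref{sz1} and $u(t+h,X^{t,x}_{t+h})=Y^{t,x}_{t+h}$, $u(t,x)=Y^{t,x}_t$ to treat the $Y$‑increment, and then telescopes over a partition of $[t,T]$ and lets the mesh tend to zero, invoking the path regularity of Theorem \ref{sz4}, the identity $Z^{t,x}_s=\nabla u(s,X^{t,x}_s)\sigma(X^{t,x}_s)$ (so that the Itô integrals against $W$ cancel in the limit), the boundedness of $q$, and the integrability of $f$, $g$ and $Z$. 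The limit is precisely equation \eqref{zhang685} on $[0,T]$, and since $T>0$ was arbitrary this proves the theorem. The step I expect to be the main obstacle is the reduction itself — namely checking that the terminal value $u(T,\cdot)=Y^{T,\cdot}_T$ inherits from Theorem \ref{sz4} and Theorem \ref{qi071} enough regularity and integrability for the finite‑horizon arguments (the proofs of Proposition \ref{p.6.2} and Theorem \ref{sz5}) to go through with $\phi$ replaced by $u(T,\cdot)$, even though $u(T,\cdot)$ is in general only of class $C^2$ rather than $C^3$. This turns out not to be a genuine difficulty: the $C^{0,2}$‑regularity of $u$ that Theorem \ref{sz5} would otherwise produce is already supplied by Theorem \ref{sz4}, and, as noted before the statement, condition {\bf (B3)(iii)} plays no role here because \eqref{zhang685} carries no prescribed terminal condition.
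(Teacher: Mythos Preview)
Your proposal is correct and follows essentially the same approach as the paper: the paper's entire argument (contained in the paragraph immediately preceding the statement) is precisely to fix $T>0$, observe via Theorem~\ref{sz4} that $u\in C^{0,2}$ so that $u(T,\cdot)\in C^2$ can serve as the terminal datum, note that condition {\bf (B3)(iii)} is unnecessary since \eqref{zhang685} has no prescribed terminal value, and then invoke Theorem~\ref{sz5}. Your write-up simply unpacks what it means to ``invoke Theorem~\ref{sz5}'' in this setting (the flow identity $u(s,X^{t,x}_s)=Y^{t,x}_s$, the representation $Z^{t,x}_s=\nabla u(s,X^{t,x}_s)\sigma(X^{t,x}_s)$, the telescoping argument, and the sign change $g\mapsto -g$), which the paper leaves implicit.
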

%\begin{proof}
%Without losing any
%generality, assume that $t'\geq t$. Then we can see
%\begin{eqnarray*}
%\lim_{t'\rightarrow t}(\int_{\mathbb{R}^{d}}{\rm
%e}^{-{2K}t'}|Y_{t'}^{t',x}-Y_{t'}^{t,x}|^2\rho^{-1}(x)dx)^{1\over2}\leq\lim_{t'\rightarrow
%t}(\sup_{s\geq0}\int_{\mathbb{R}^{d}}{\rm
%e}^{-{2K}s}|Y_{s}^{t',x}-Y_{s}^{t,x}|^2\rho^{-1}(x)dx)^{1\over2}=0\
%\ \ \ {\rm a.s.}.
%\end{eqnarray*}
%Notice $t'\in[0,T]$, so
%\begin{eqnarray}\label{zhang706}
%\lim_{t'\rightarrow
%t}(\int_{\mathbb{R}^{d}}|Y_{t'}^{t',x}-Y_{t'}^{t,x}|^2\rho^{-1}(x)dx)^{1\over2}=0\
%\ \ \ {\rm a.s.}.
%\end{eqnarray}
%Since $Y_{\cdot}^{t,\cdot}\in
%S^{2,-K}([0,\infty);L_{\rho}^2({\mathbb{R}^{d}};{\mathbb{R}}))$,
%$Y_{t'}^{t,\cdot}$ is continuous w.r.t. $t'$ in
%$L_{\rho}^2(\mathbb{R}^d;\mathbb{R})$. That is to say for each
%$t$,
%\begin{eqnarray}\label{zhang705}
%\lim_{t'\rightarrow
%t}(\int_{\mathbb{R}^{d}}|{Y}_{t'}^{t,x}-{Y}_{t}^{t,x}|^2\rho^{-1}(x)dx)^{1\over2}=0\
%\ \ \ {\rm a.s.}.
%\end{eqnarray}
%Now by (\ref{zhang706}) and (\ref{zhang705})
%\begin{eqnarray*}
%&&\lim_{t'\rightarrow t}(\int_{\mathbb{R}^{d}}|{Y}_{t'}^{t',x}-{Y}_{t}^{t,x}|^2\rho^{-1}(x)dx)^{1\over2}\\
%&\leq&\lim_{t'\rightarrow t}(\int_{\mathbb{R}^{d}}|{Y}_{t'}^{t',x}-{Y}_{t'}^{t,x}|^2\rho^{-1}(x)dx)^{1\over2}+\lim_{t'\rightarrow %t}(\int_{\mathbb{R}^{d}}|{Y}_{t'}^{t,x}-{Y}_{t}^{t,x}|^2\rho^{-1}(x)dx)^{1\over2}\\
%&=&0\ \ \ \ {\rm a.s.}.
%\end{eqnarray*}

%\end{proof}

The following theorem is the main result in this section.
\begin{theorem}\label{qi046} Assume  condition {\bf (P)} and the same conditions in Theorem \ref{sz4}. For any $T>0$, define $u(t,x)\triangleq Y_{t}^{t,x}$, where
$(Y_{s}^{t,x},Z_{s}^{t,x})_{s\ge t}$ is the solution to
BDSDE (\ref{sz1}). Then $u(t,x)$ has a version
which is a ``perfect'' random periodic solution to SPDE (\ref{zhang685}).
\end{theorem}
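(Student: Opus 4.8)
The plan is to upgrade the ``crude'' random periodicity of Proposition \ref{qi031} to a ``perfect'' one by exploiting the joint continuity of $u$ proved in Theorem \ref{sz4}, and then to invoke Theorem \ref{qi044} to see that this same version solves SPDE \eqref{zhang685}.

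First I would fix the version of $(Y^{t,x}_s,Z^{t,x}_s)$ whose trajectories lie in $C^{0,0,2}([0,\infty)^2\times\R^d)$, which exists by Theorem \ref{sz4}; then $u(t,x):=Y^{t,x}_t$ is jointly continuous in $(t,x)$, and by Theorem \ref{qi044} it is, for every $T>0$, a solution to SPDE \eqref{zhang685} on $[0,T]$. Thus it only remains to establish the perfect periodicity relation \eqref{sz6} for this $u$.

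Next, choose countable dense sets $D\subset[0,\infty)$ and $E\subset\R^d$. By Proposition \ref{qi031}, for each $t\in D$ and $x\in E$ there is a $P$-null set off which $\theta_\tau\circ Y^{t,x}_t=Y^{t+\tau,x}_{t+\tau}$, that is, $u(t,x)(\theta_\tau\omega)=u(t+\tau,x)(\omega)$; let $N_0$ be the (countable) union of these null sets, let $N_1$ be the null set off which the trajectories of $u$ are continuous, and set $N:=N_0\cup N_1\cup\theta_\tau^{-1}(N_1)$, which is still $P$-null because $\theta_\tau$ is measure-preserving. For $\omega\notin N$ the two maps $t\mapsto u(t,x)(\theta_\tau\omega)$ and $t\mapsto u(t+\tau,x)(\omega)$ are continuous on $[0,\infty)$ for every $x$, and they agree on the dense set $D$ whenever $x\in E$; hence they agree for all $t\ge0$, and then, by continuity in $x$, for all $x\in\R^d$. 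This is exactly $\theta_\tau\circ u(t,\cdot)=u(t+\tau,\cdot)$ for all $t\ge0$, a.s., i.e. \eqref{sz6}, so together with Theorem \ref{qi044} the random field $u$ is a perfect random periodic solution to \eqref{zhang685}.

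I expect the main obstacle to be the null-set bookkeeping in this perfection step: one must ensure that the a.s.\ identity of Proposition \ref{qi031}, which a priori comes with a time-dependent exceptional set, is applied to the \emph{same} continuous version supplied by Theorem \ref{sz4}, and that pulling back along the measure-preserving shift $\theta_\tau$ does not spoil the null property. Once continuity in $t$ of both $u(\cdot,x)$ and $\omega\mapsto u(\cdot,x)(\theta_\tau\omega)$ is secured, the density argument is routine and no separate perfection procedure in the spirit of \cite{ar} is required.
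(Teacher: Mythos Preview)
Your proposal is correct and follows essentially the same route as the paper: invoke Theorem \ref{qi044} for the SPDE property, Proposition \ref{qi031} for the crude identity $\theta_\tau\circ u(t,\cdot)=u(t+\tau,\cdot)$ a.s.\ for each $t$, and then use the continuity in $t$ supplied by Theorem \ref{sz4} to pass to a perfect version. The paper's own proof is extremely brief at the perfection step (it simply says ``by the continuity of $u(t,\cdot)$ in $t$, one can find an indistinguishable version\ldots''), whereas you have spelled out the standard dense-set/null-set argument in full; your extra care with $\theta_\tau^{-1}(N_1)$ is exactly the point the paper suppresses.
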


\begin{proof}
By Theorem \ref{qi044}, we know that $u(t,x)\triangleq
Y_{t}^{t,x}$ is the solution to
SPDE (\ref{zhang685}), so we get from (\ref{qi19}) that for any
$t\geq0$,
\begin{eqnarray*}
{\theta}_\tau\circ u({t,\cdot})=u({t+\tau,\cdot})\ \ \ {\rm a.s.}
\end{eqnarray*}
 The above is the so-called ``crude'' random periodic property for $u({t,\cdot})$.
By the continuity of $u(t,\cdot)$ in $t$, one can find an indistinguishable version of
$u({t,\cdot})$, still denoted by $u({t,\cdot})$, such that it is a ``perfect'' random periodic solution in the sense of (\ref{sz6}). \hfill
\end{proof}

Finally, we consider stationary solutions to SPDEs, in a special case that the coefficient functions $f, g: \R^d\times \R\times \R^{d}\to \R$ in {\bf (B3)'} are independent of time variable. Let $(\tilde {B}(t,x), t\in\R, x\in \RR^d )$ be a centered Gaussian field independent of $W$ with covariance $\E[\tilde B(t,x)\tilde B(s,y)]=(|t|\wedge |s|)q(x,y)$. Consider the following infinite horizon SPDE,
\begin{eqnarray}\label{ssz1}
v(t,x)&=&v(0,x)+\int_{0}^{t}[\mathscr{L}v(s,x)+f\big(x,v(s,x),(\sigma^T\nabla v)(s,x)\big)]ds\nonumber\\
&&+\int_{0}^{t}g\big(x,v(s,x),(\sigma^T\nabla v)(s,x)\big)\tilde{B}(ds, x).
\end{eqnarray}
For any $T>0$, if we choose $B(s,x)=\tilde{B}(T-s,x)-\tilde{B}(T,x)$ as the driven noise in  SPDE (\ref{zhang685}), then SPDE (\ref{zhang685}) is a time reversal of SPDE (\ref{ssz1}).
\begin{theorem}\label{qi046} Assume the same conditions in Theorem \ref{sz4}. For any  $T>0$, let $v(t,x)\triangleq Y_{T-t}^{T-t,x}$, where
$(Y_{s}^{t,x},Z_{s}^{t,x})_{s\ge t}$ is the solution to
BDSDE (\ref{sz1}) with $B(s,x)=\tilde{B}(T-s,x)-\tilde{B}(T,x)$ for $s\geq0$. Then $v(t,x)$ has a version
which is a ``perfect'' stationary solution to SPDE (\ref{ssz1}).
\end{theorem}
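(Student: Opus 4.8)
The plan is to deduce the statement from the random periodic version of the theorem (through \eqref{qi19} and Proposition~\ref{qi031}) by exploiting that SPDE~\eqref{zhang685} is the time reversal of SPDE~\eqref{ssz1}. Fix $T>0$ and set $B(s,x):=\tilde B(T-s,x)-\tilde B(T,x)$ for $s\ge0$; this is a continuous local martingale (with respect to a suitable backward filtration) with $B(0,\cdot)=0$ and $\langle B(\cdot,x),B(\cdot,y)\rangle_s=sq(x,y)$, so the hypotheses of Theorems~\ref{qi071}, \ref{sz4} and \ref{qi044} are met. Write $u(t,x):=Y_t^{t,x}$ for the solution of BDSDE~\eqref{sz1} driven by this $B$, so that $v(t,x)=u(T-t,x)$. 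Since $f$ and $g$ do not depend on time, condition {\bf (P)} holds for \emph{every} period $\tau>0$; hence, by Theorem~\ref{qi044} and the random periodic theorem, $u$ is a classical solution of \eqref{zhang685} on $[0,T]$ and, for each $\tau>0$, $\theta_\tau\circ u(t,\cdot)=u(t+\tau,\cdot)$ for all $t\ge0$ on an almost-sure event; by the continuity of $u(\cdot,x)$ (Theorem~\ref{sz4}) these events can be chosen so as to yield a single almost-sure event valid for all $t,\tau\ge0$ simultaneously.

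Next I would verify directly that $v$ is a classical solution of \eqref{ssz1} on $[0,T]$. Applying the change of variables $s\mapsto T-s$ in \eqref{zhang685}, the $\mathscr L$- and $f$-terms pass over to the corresponding terms of \eqref{ssz1} because $\mathscr L$ and $f$ are autonomous, while for the stochastic term I would invoke the identity \eqref{bmrelation} from the proof of Proposition~\ref{qi031} with reference time $T'=T$: then $\hat B(s,x)=B(T-s,x)-B(T,x)=\tilde B(s,x)$, so the backward It\^o integral $\int\overleftarrow B(ds,x)$ against $B(s,\cdot)=\tilde B(T-s,\cdot)-\tilde B(T,\cdot)$ becomes, with the correct sign, the forward It\^o integral $\int\tilde B(ds,x)$. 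Together with the path regularity from Theorem~\ref{sz4} and Proposition~\ref{p.6.2} this gives $v\in C^{0,2}([0,T]\times\R^d)$ solving \eqref{ssz1}.

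It remains to read off stationarity, which is the crux. Expressing $B$ in terms of $\tilde B$ one computes $\theta_\tau\circ B(s,x)=B(s+\tau,x)-B(\tau,x)=\tilde B\big((T-\tau)-s,x\big)-\tilde B(T-\tau,x)$, i.e. the shift $\theta_\tau$ of the random periodic theorem acts on the $\tilde B$-component as a \emph{backward} shift of the reference time; combining this with the cocycle property \eqref{qi18} for $X^{t,x}$ and the uniqueness in Theorem~\ref{qi071}, the periodic identity $\theta_\tau\circ u(s,\cdot)=u(s+\tau,\cdot)$ translates, after the time reversal $v(t,x)=u(T-t,x)$, into $\vartheta_\tau\circ v(t,\cdot)=v(t+\tau,\cdot)$ for $0\le t\le t+\tau\le T$, where $\vartheta$ denotes the (forward) shift of the autonomous random dynamical system attached to \eqref{ssz1}. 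Taking $t=0$ gives $v(\tau,\cdot,\omega)=v(0,\cdot,\vartheta_\tau\omega)$, so $v$ is a stationary solution of \eqref{ssz1}, and since the exceptional set is independent of $\tau$ (the periodic property being ``perfect'' and $v(\cdot,x)$ being continuous), a perfection argument produces an indistinguishable version of $v$ that is a ``perfect'' stationary solution. I expect the main obstacle to be precisely this last step — matching the abstract shift of the random periodic theorem with the forward shift of $(\tilde B,W)$ for \eqref{ssz1} under the two nested time reversals (one defining $B$ from $\tilde B$, one defining $v$ from $u$), and in particular tracking how the Brownian increments driving $X^{t,x}$ behave under this identification; once that dictionary is fixed, the rest is a direct transcription of the random periodic theorem, Proposition~\ref{qi031} and Theorem~\ref{sz4}.
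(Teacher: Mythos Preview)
Your proposal is correct and follows essentially the same route as the paper: use that $f,g$ are autonomous so {\bf (P)} holds for every $\tau$, apply the random periodic result to $u$, then transfer to $v$ by time reversal. Two small points where the paper is slightly cleaner: (i) rather than computing how $\theta_\tau$ acts on $\tilde B$ and then introducing an abstract $\vartheta$, the paper simply sets $\tilde\theta_t:=(\theta_t)^{-1}$ and checks directly that $\tilde\theta_t\circ\tilde B(s,x)=\tilde B(s+t,x)-\tilde B(t,x)$, which makes the computation $\tilde\theta_r\circ v(t,x)=\theta_{-r}\circ u(T-t,x)=\theta_{-r}\circ\theta_r\circ u(T-t-r,x)=v(t+r,x)$ a one-liner; (ii) the paper notes (citing \cite{zhangzhao}) that $v(t,x)$ is independent of the choice of $T$, which you need in order to make sense of stationarity for all $t,r\ge0$ rather than only for $t+r\le T$.
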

\begin{proof}
Notice that SPDE (\ref{zhang685}) is a time reversal transformation of SPDE (\ref{ssz1}). By Theorem \ref{sz4} $v(t,x)=Y_{T-t}^{T-t,x}$ is a solution to (\ref{ssz1}). Furthermore, one can show that
$v(t,x)$ does not depend on the choice of $T$ as  in \cite[Theorem 2.12]{zhangzhao}.

We define $\tilde{\theta}_{t}=({\theta}_{t})^{-1}$, $t\geq0$. Then $(\Omega,\mathscr{F},P)$ and $\tilde{\theta}$ constitute a new measurable metric dynamical system. Moreover, $\tilde{\theta}_{t}\circ \tilde {B}(s,x)=\tilde {B}(s+t,x)-\tilde {B}(t,x)$ since $\tilde {B}$ is the time-reversal form of ${B}$.

Since we are assuming that $f,g$ in BDSDEs (\ref{sz1}) and (\ref{zhang685}) are independent of time variable, {\bf (P)} holds for any $\tau\geq0$. Therefore, (\ref{sz6}) gives a stationary solution to SPDE (\ref{zhang685}) by perfection procedure (see e.g. \cite{ar,ar-sc}), i.e.
\begin{eqnarray*}
{\theta}_{r}\circ u({t,\cdot})=u({t+r,\cdot})\ \ \ {\rm for}\
{\rm all}\ t,r\geq0\ \ {\rm a.s.}
\end{eqnarray*}
%Since $v(t,\cdot)(\omega)=u(T-t,\cdot)(\hat{\omega})=Y_{T-t}^{T-t,\cdot}(\hat{\omega})$ a.s., so
Therefore,
\begin{eqnarray}\label{sz7}
\tilde{\theta}_r\circ v(t,x)&=&{\theta}_{-r}\circ u(T-t,x)={\theta}_{-r}\circ{\theta}_{r}\circ u(T-t-r,x)\nonumber\\&=&u(T-t-r,x)=v(t+r,x)\ \ \ \ \text{for all $t,r\geq0$ a.s.,}
\end{eqnarray}
where $T$ is chosen sufficiently large  such that $t+r\leq T$.
This yields that
$v(t,x)=Y_{T-t}^{T-t,x}$
is a ``perfect'' stationary solution to SPDE (\ref{ssz1}) with respect to the shift operator $\tilde{\theta}$.\hfill
\end{proof}

\section*{Acknowledgements}

J. Song was supported by ECS grant (project code 27302216) of the Hong Kong Research Grants Council.  Q. Zhang was supported by NSF of China (No. 11471079, 11631004), and the Science and Technology Commission of Shanghai Municipality (No. 14XD1400400).

\end{document}